% 2nd revision for RSA.  Last updated 12 May 2025, CG, implementing Colin's comments

\documentclass[12pt,a4paper]{article}

\usepackage[margin=1in]{geometry}
\usepackage{tikz,amsmath,amsthm,amssymb}
\usetikzlibrary{calc}
\usepackage{enumitem}
\usepackage{nicefrac}
\usepackage[mathscr]{euscript}
\usepackage{comment,xspace}
\usepackage[colorlinks=true,citecolor=black,linkcolor=black,urlcolor=blue]{hyperref}
\newcommand{\arxiv}[1]{\href{http://arxiv.org/abs/#1}{\texttt{arXiv:#1}}}

\parindent 0pt
\parskip 5pt

\theoremstyle{plain}
\newtheorem{lem}{Lemma}[section]
\newtheorem{thm}{Theorem}[section]
\newtheorem{cor}{Corollary}[section]
\theoremstyle{remark}
\newtheorem{rem}{Remark}

\def\nfrac#1#2{{\textstyle\frac{#1}{#2}}}

\newcommand{\cM}{\mathcal{M}}
\newcommand{\cN}{\mathcal{N}}
\newcommand{\cX}{\mathcal{X}}
\newcommand{\dtv}{\mathrm{d}_{\mathrm{TV}}}
\newcommand{\bbR}{\mathbb{R}}
\newcommand{\cG}{\mathcal{G}}

\newcommand{\Gnd}[1][n,\dsq]{\mathcal{G}_{#1}}

\newcommand{\dsq}{\boldsymbol{d}}

\newcommand{\pois}[1]{\ensuremath{\textrm{Pois}(#1)}}
\newcommand{\law}[1]{\ensuremath{\mathcal{L}(#1)}}
\newcommand{\distas}{\thicksim_d}
\newcommand{\distapp}{\thickapprox_d}
\newcommand{\Expn}{\mathbb{E}}
\newcommand{\Ex}[1]{\mathbb{E}_{\mathrm{#1}}}

\newcommand{\ds}{$\triangle$-switch}
\newcommand{\dsp}{$\triangle^+$-switch}
\newcommand{\dsm}{$\triangle^-$-switch}
\newcommand{\Dp}{$\triangle^+$}
\newcommand{\Dm}{$\triangle^-$}
\newcommand{\bard}{\bar{d}}
\newcommand{\wlg}{without loss of generality}
\newcommand{\ifff}{if and only if\xspace}
\newcommand{\Nb}{\mbox{N}}

\newcommand{\sign}{\textrm{sign}}
\newcommand{\norm}[1]{\|#1\|}
\newcommand{\unif}{\ensuremath{\pi'}\xspace}
\newcommand{\mugeneral}{\beta}   %  for general mean e.g. Poisson(?)

%%%  Colour commands, for editing
%\newcommand{\hide}[1]{}
%\newcommand{\ignore}[1]{}
%\newcommand{\med}[1]{{\color{red}MED:\ #1}}
%\newcommand{\md}[1]{{\color{red} #1}}
%\newcommand{\csg}[1]{{\color{blue}\ [CSG:~#1]}}
%\newcommand{\cg}[1]{{\color{blue} #1}}
%\newcommand{\cdc}[1]{{ \color{Plum}\ CDC: \hrule ~#1
%\hrule \vspace{0.1in}}}\newcommand{\cc}[1]{{\color{Plum} #1}}
%\newcommand{\cdcbox}[1]{\fbox{{ \color{Plum}\ CDC:~#1}}}

%CDC stuff
%\def\blue{\color{blue}}
%\def\red{\color{red}}
%\newcounter{rot}%\addtocounter{rot}{1}, \therot
%\newcommand{\bignote}[1]{\vspace{0.25 in}\fbox{\parbox{6in}{\blue #1}}%
%\marginpar{\fbox{{\bf {\red \large Q \therot}}}}\addtocounter{rot}{1}}

\newcommand{\es}{\emptyset}
\newcommand{\sm}{\setminus}

\tikzset{ b/.style = { circle % small black vertex
                     , draw
                     , thick
                     , inner sep = 0pt
                     , fill = black
                     , minimum size = 2.5pt
                     }}

\tikzset{every picture/.style={line width=0.5pt}}

\date{15 June 2025}

\title{Triangle processes on graphs\\ with given degree sequence}
\author{%
Colin Cooper\\
{\small Department of Informatics}\\[-0.5ex]
{\small King's College}\\[-0.5ex]
{\small London WC2B 4BG, U.K.}\\[-0.2ex]
{\small \texttt{colin.cooper@kcl.ac.uk}}\\[-0.5ex]
{\small \texttt{orcid:\,0000-0002-5264-4401}}\\
\and
Martin Dyer\\
{\small School of Computing}\\[-0.5ex]
{\small University of Leeds}\\[-0.5ex]
{\small Leeds LS2 9JT, U.K.}\\[-0.2ex]
{\small \texttt{m.e.dyer@leeds.ac.uk}}\\[-0.5ex]
{\small \texttt{orcid:\,0000-0002-2018-0374}}\\
\and
Catherine Greenhill\\
{\small School of Mathematics and Statistics}\\[-0.5ex]
{\small UNSW Sydney}\\[-0.5ex]
{\small NSW 2052, Australia}\\[-0.2ex]
{\small \texttt{c.greenhill@unsw.edu.au}}\\[-0.5ex]
{\small \texttt{orcid:\,0000-0001-6998-2282}}\\
}

\begin{document}

\maketitle

\begin{abstract}
The switch chain is a well-studied Markov chain which generates random graphs with a given degree sequence and has uniform stationary distribution.
Motivated by the high number of triangles seen in some real-world networks,
%such as social networks, 
we study a variant of the switch chain which is more likely to produce graphs 
with higher numbers
of triangles. Specifically, we apply a Metropolis scheme
designed to have the following stationary
distribution: graph $G$ has probability proportional to $\lambda^{\min\{t(G),\nu\}}$,
where $t(G)$ is the number of triangles in $G$ and $\nu$ is a cut-off value introduced 
to moderate the impact of graphs with a very high number of triangles.  We assume that
the ``activity'' $\lambda$ satisfies $\lambda\geq 1$, and call the resulting chain the modified Metropolis switch chain.
We prove that the modified Metropolis switch chain is rapidly mixing whenever
the (standard) switch
chain is rapidly mixing, provided that the activity and maximum degree are not too large.

The triangle switch (or ``\ds'') chain is a restriction of the switch chain which
only performs switches that change the set of triangles in the graph.
We prove that the \ds\ chain is irreducible for any degree sequence with minimum degree at least~3, and prove a rapid mixing result for the modified
Metropolis \ds\ chain. 

Finally, we investigate the distribution of triangles in random graphs with given degrees, under both the uniform distribution and the distribution in 
which graph $G$ has probability proportional to $\lambda^{t(G)}$. 
Our analysis implies that the imposition of the cut-off $\nu$ does not significantly impact the behaviour of these modified Metropolis chains over polynomially many steps.
\end{abstract}

\abovedisplayskip=1ex \belowdisplayskip=1ex

\section{Introduction}\label{sec:intro}
Randomly generating graphs has been the subject of considerable research interest.
See, for example,~\cite{AMPTW,ABLMO,AK,BKS,CDG07,CDGH,FGMS,PGW,KTV,MES,MS,TikYou}.
A comprehensive survey of work to date is given in~\cite{Greenhill}.

Generation using Markov chains has been an important approach in this area, particularly Markov chains based on switches, for example~\cite{AK,CDG07,CDGH,FGMS,KTV,LMM,MES,MS,TikYou}. Switches delete a pair of edges from the graph and insert a pair using the same four vertices. They have the important property that they preserve the degree sequence of the graph. Thus they are useful for generating regular graphs, or other graphs with a given degree sequence. Markov chains also give a dynamic reconfigurability property, which is useful in some applications, for example~\cite{CDG07,FGMS,MS}.

For any such Markov chain, three questions arise. Firstly, can the chain generate any graph in the chosen class?  In other words, is the Markov chain ergodic? Secondly, if so what is its equilibrium distribution? Thirdly, what is the rate of convergence to equilibrium? That is, what is the mixing time of the chain? Note that it is usually fairly easy to ensure that a Markov chain is aperiodic, so the second question focusses on establishing irreducibility.

The expected number of triangles in a uniformly random graph with a given degree
sequence can be bounded above by the cube of the maximum degree.  Hence, for example,
when all
degrees are bounded we expect only a bounded number of triangles in this random graph.
However, many real-world networks, such as social networks, contain many 
triangles~\cite{GKM,JGN}.  This motivated us to adapt the transition procedure
of the switch chain by introducing a Metropolis accept/reject scheme~\cite{hastings,metropolis}, described
in Section~\ref{sec:chain}.  
This chain has an equilibrium distribution $\hat{\pi}_\lambda$ in which the probability of a graph is exponentially weighted by its number of triangles, up to some cut-off which is required to moderate the impact of graphs with a very high number of triangles. 
The chain has a parameter $\lambda\geq 1$ which is a fixed real number.
Our first result (Theorem~\ref{thm:Metropolis-switch}) shows that this chain, which we call the \emph{modified Metropolis switch chain},
is rapidly mixing whenever the (standard) switch chain is rapidly mixing,
under some conditions on the degree sequence and the parameter $\lambda$ of the chain.

The \emph{triangle switch Markov chain}, which we abbreviate as ``\ds\ chain'', was introduced in~\cite{CDG19} to explore ways to generate regular random graphs with more triangles than would appear under the uniform distribution. 
See Section~\ref{sec:chain} below for the relevant definitions. 
Transitions of the \ds\ chain are precisely those switches which change the set of triangles in the graph; 
several options for the transition probabilities were proposed in~\cite{CDG19}.
These transition probabilities were designed to encourage the formation
of more triangles in fewer steps, compared with the standard switch chain.
One variant was shown to produce $\Omega(n)$ triangles in $O(n)$ steps of the chain~\cite[Theorem~2]{CDG19}, though it is not known whether that variant of the chain is close to stationarity 
after $O(n)$ steps.

The \ds\ chain was proved to be irreducible for 3-regular graphs (note, irreducibility depends only on the set of transitions with positive probability, and not the values of the transition probabilities). 
The proof of irreducibility involved finding a sequence of transitions of the chain (\ds es) which transform any 3-regular graph into disjoint copies of $K_4$ together with a disjoint 6-vertex graph, and then showing that any such graph can be transformed into another using \ds es. This proof is not well suited to standard methods for proving rapid mixing of the chain. A further obstacle is that some of the variants studied in~\cite{CDG19} do not seem to be time-reversible, a requirement for most approaches to proving rapid mixing.

The proof of irreducibility in~\cite{CDG19} was extended in~\cite{CDG21} to $d$-regular graphs, for any $d$. The proof involved finding a sequence of \ds es which  transform any $d$-regular graph into disjoint copies of $K_{d+1}$ together with a disjoint graph, called a ``fragment'', with at most $2d$ vertices; then showing that any such graph can be transformed into another using \ds es. This required a different approach to proving irreducibility of the fragments, by showing that any step of the flip chain of~\cite{MS} can be simulated by at most three \ds es.

Our second result (Theorem~\ref{thm:irreducible}) extends and generalises the result of~\cite{CDG19} by showing that any step of the switch chain~\cite{CDG07} can be simulated by at most five \ds es. Moreover this holds for all graphs with a given degree sequence such that the minimum degree is at least~3. 

Next we apply the Metropolis accept/reject approach to the \ds\ chain, producing
a Markov chain which we called the \emph{modified Metropolis \ds\ chain}.
This chain has the same stationary distribution $\hat{\pi}_\lambda$ as the modified Metropolis switch
chain.  Our third result (Theorem~\ref{thm:rapid}) analyses the mixing time of the modified Metropolis \ds\ chain, 
using the simulation paths that we introduced to
prove irreducibility.

Finally, we analyse the number of triangles in a randomly chosen graph with
given degree sequence, under two distributions: the uniform distribution, and
the Gibbs distribution $\pi_\lambda$ which weights each graph exponentially
by the its number of triangles (without any cut-off).  
This analysis, presented in Section~\ref{sec:distribution}, 
may be of independent interest.  We use it to show that the introduction of
the cut-off (which limits the impact of graphs with a very high number of triangles)
does not noticeably affect the performance of the chain over polynomial-length runtimes.

Our main results are presented formally in Section~\ref{sec:our-results}, where
we also describe the structure of the rest of the paper. 

We only know of a few other works which rigorously analyse Markov
chains for generating graphs from a known non-uniform distribution.
It may be possible that the algorithm of Jerrum, Sinclair and Vigoda~\cite{JSV} for the permanent could be used to sample bipartite graphs with given degree sequence 
from a non-uniform distribution by adjusting the edge weights, though to our knowledge
this has not been explored. Bhamidi, Bresler and Sly~\cite{BBS} analysed the
Glauber dynamics (single-edge update) for sampling from the exponential random graph model,
and proved rapid mixing in the ``high temperature regime'', but also proved
that the graphs generated in this regime are very close to binomial random
graphs $G(n,p)$, in the sense that any finite set of edges are asymptotically independent.
Recently, Jenssen, Perkins, Potukuchi and Simkin~\cite{JPPS} analysed the
Glauber dynamics for generating from $G(n,p)$ conditioned on being triangle-free,
and proved that this Markov chain is rapidly mixing when $p\leq cn^{-1/2}$
for some sufficiently small $c>0$.

Before proceeding, we mention some related work on sampling graphs with a given degree sequence
and additional constraints. Mahlmann and Schindelhauer~\cite{MS} proposed a restricted set of 
switches, called $k$-flips, which maintain connectivity. The 1-flip chain is rapidly
mixing on the set of $d$-regular connected graphs, for any $d$~\cite{CDGH,FGMS}.
In some applications it can be useful to specify the number of edges between sets of vertices 
with given degrees, as well as the degree sequence.  This is done with a \emph{joint degree
matrix}. Work on the switch chain in this context can be found in~\cite{AK-jd,CDEM,EMT,SP}.

\subsection{Notation and definitions}\label{sec:notation}
Let $[n]$ denote the set $\{1,2,\ldots, n\}$ and let $G=(V,E)$ be an $n$-vertex (simple) graph with vertex set $V=[n]$. The degree sequence $\dsq=(d_1,d_2,\ldots,d_n)$ of $G$ is given by $d_i=\deg(i) $ for all $i\in[n]$. An arbitrary sequence $\dsq$ of $n$ nonnegative integers is called \emph{graphical} if there is any graph which has degree sequence $\dsq$. Given the sequence, this property can be tested efficiently and constructively using, for example, the Havel-Hakimi algorithm~\cite{hakimi,havel}.

We assume \wlg\ that $d_1\geq d_2\geq\cdots\geq d_n$, so $d_1$ is the maximum degree and $d_n$ the minimum degree. We note that $d_1,d_n$ are usually denoted by $\Delta,\delta$, but we will not do so here. If $d_1=d_n=d$, the $d$-regular case, then $d$ is common degree of all vertices. Define
\[  M = M(\dsq) =\sum_{i=1}^n d_i=2|E|=n\bard, \qquad M_2 = M_2(\dsq) =\sum_{i=1}^n  d_i(d_i-1),\]
where $\bard$ is the \emph{average} degree.

We will denote the number of triangles in $G$ by
\[ t(G)=\big|\big\{\{i,j,k\}\subseteq V:ij,jk,ki\in E\big\}\big|.\]
If $G$ has degree sequence $\dsq$ then the maximum value for $t(G)$ is $\nicefrac13\sum_{i=1}^n\binom{d_i}{2}=M_2/6$. (The sum is over all ways of choosing two edges of the triangle meeting a given vertex, the divisor $\nicefrac13$ because this counts each triangle three times.)
In the regular case, this maximum is achieved by a
$d$-regular graph consisting of the disjoint union of
$n/(d+1)$ cliques of size $(d+1)$.

Let $\Gnd$ be the set of all labelled graphs with degree sequence $\dsq$.
The \emph{switch chain} is a Markov chain which walks on $\Gnd$ by choosing
two non-incident edges uniformly at random, and randomising these two edges without
changing the degree sequence. 
\begin{center}
\begin{tikzpicture}[scale=0.6]
\tikzset{empty/.style={rectangle,draw=none,fill=none}}
\draw [thick,-] (4,0) -- (2,0)  (2,2) -- (4,2);
\draw [dashed,thick] (4,0)--(4,2) (2,0)--(2,2);
\draw [fill] (2,0) circle (0.1); \draw [fill] (2,2) circle (0.1);
\draw [fill] (4,2) circle (0.1); \draw [fill] (4,0) circle (0.1);
\draw  (5,1.0)edge[line width=1.5pt,->](7,1.0);
\begin{scope}[shift={(6,0)}]
\draw [thick,-] (2,2) -- (2,0) (4,0) -- (4,2);
\draw [thick,dashed,-] (4,0) -- (2,0) (2,2) -- (4,2);
\draw [fill] (2,0) circle (0.1); \draw [fill] (2,2) circle (0.1);
\draw [fill] (4,2) circle (0.1); \draw [fill] (4,0) circle (0.1);
\end{scope}
\end{tikzpicture}
\end{center}
It is known that the switch chain converges to the uniform distribution on $\Gnd$, see~\cite{fulkerson,LMM}.
Here we consider a also restricted switch chain, in which a
switch is only allowed if it changes the set of triangles in the graph. This is achieved by \ds es.
A \dsp\ is designed to increase the number of triangles, and a \dsm\ to decrease this number. See Fig.~\ref{fig:triswitch}, where $a_2a_1va_3a_4$ is a path in $G$ such that $a_1a_3$ and $a_2a_4$ are non-edges, which is transformed to a triangle $va_1a_3$ and a vertex-disjoint edge $a_2a_4$ and vice versa.

\begin{figure}[!htb]
\centerline{
\begin{tikzpicture}[scale=0.6]
\tikzset{empty/.style={rectangle,draw=none,fill=none}}
\draw [thick,-] (4,0) -- (2,0) -- (0,1) -- (2,2) -- (4,2);
\draw [dashed,thick] (4,0)--(4,2) (2,0)--(2,2) ;
\draw [fill] (0,1) circle (0.1); \draw [fill] (2,0) circle (0.1);
\draw [fill] (2,2) circle (0.1);
\draw [fill] (4,2) circle (0.1); \draw [fill] (4,0) circle (0.1);
\node [left] at (-0.1,1)  {$v$};
\node [above] at (2,2.1)  {$a_1$}; \node [below] at (2,-0.1)  {$a_3$};
\node [above] at (4,2.1)  {$a_2$}; \node [below] at (4,-0.1)  {$a_4$};
\draw  (6,1.3)edge[line width=1.5pt,->](8,1.3);
\draw (7,1.3)node[empty,label=above:\dsp] {} ;
\draw  (6,0.8)edge[line width=1.5pt,<-](8,0.8);
\draw (7,0.9)node[empty,label=below:\dsm] {} ;
\begin{scope}[shift={(10,0)}]
\draw [thick,-] (0,1) -- (2,2) -- (2,0) -- (0,1) (4,0) -- (4,2);
\draw [thick,dashed,-] (4,0) -- (2,0) (2,2) -- (4,2);
\draw [fill] (0,1) circle (0.1); \draw [fill] (2,0) circle (0.1);
\draw [fill] (2,2) circle (0.1);
\draw [fill] (4,2) circle (0.1); \draw [fill] (4,0) circle (0.1);
\node [left] at (-0.1,1)  {$v$};
\node [above] at (2,2.1)  {$a_1$}; \node [below] at (2,-0.1)  {$a_3$};
\node [above] at (4,2.1)  {$a_2$}; \node [below] at (4,-0.1)  {$a_4$};
\end{scope}
\end{tikzpicture}}
\caption{ The \ds es}\label{fig:triswitch}
\end{figure}
We denote the transitions in the triangle process as \ds es. We may use \dsp\ to denote
a switch which is known to create a triangle, and \dsm\ for a switch which is known
to break a triangle. However, a given \ds\ can simultaneously create one or more triangles
and break one or more triangles, in which case it can be viewed as either a \dsp\ or a \dsm.
We reserve the term ``switch'', without qualification, for the transitions of the switch chain,
as studied in~\cite{CDG07}, for example.
The fact that a single \ds\ can destroy and introduce several triangles at once
significantly complicates the analysis.  Working with a Metropolis chain sidesteps this issue, as it allows us to specify the stationary distribution of the chain. 

Let
\begin{equation}
\label{eq:mu-def}
\mu(\dsq)=M_2^3/(6M^3).
\end{equation} 
We show in Lemma~\ref{prob:lem05} below that the expected number of triangles in a 
uniformly chosen random graph from $\Gnd$ is $\mu(\dsq)$
asymptotically if $d_1=o(\sqrt{M})$. The Metropolis switch chain introduced in Section~\ref{sec:chain} (see Figure~\ref{fig:switch-metropolis-unmodified})
has a non-uniform stationary distribution $\pi_\lambda$, where graph $G$ has probability proportional to
$\lambda^{t(G)}$, for some parameter $\lambda = \lambda(n) \geq 1$, which we call the \emph{activity}.  When $\lambda >1$
the Metropolis switch chain will generate graphs with more triangles than expected in
a uniform random graph with the same degree sequence. In fact, we will show that
$\Expn_{\pi_{\lambda}}[t(G)]\sim\lambda\mu$ in this distribution, if $\lambda$ and $d_1$ are small enough. (See Theorem~\ref{thm:asympt-Poisson}.)

In order to prove rapid mixing we must modify this chain slightly, to decrease the impact of graphs with an extremely high number of triangles, 
as described in Section~\ref{sec:chain} (see Figure~\ref{fig:modified-triangle-switch-code}).  To demonstrate that this modification does not significantly
impact the chain (over a polynomial number of steps), in 
Section~\ref{sec:distribution} we study the asymptotic distribution 
of the number of
triangles under both the uniform distribution on $\cG_{n,\dsq}$ and the stationary
distribution $\pi_\lambda$ of the Metropolis switch chain. 
We prove that (other than in the upper tail) the
distribution in both cases is asymptotically Poisson,  
under the assumption that $M_2 \geq M$. %since this is implicit in some of the proofs of asymptotic distributions below. 
(See Section~\ref{s:assumptions} for some discussion about this assumption.)
These results may be of independent interest.

There has been a lot of work on the distribution of triangle counts in the Erd{\H o}s--Renyi model, 
see for example~\cite{AM} and references therein.  
However, prior work on this problem for graphs with given degree
sequences was restricted to the regular case, see~\cite{Gao, ZGW, MWW}.
In particular, the number of triangles in a random $d$-regular graph
is asymptotically Poisson when $d=o(n^{1/5})$, as proved by McKay,
Wormald and Wysocka~\cite{MWW}.  Z.~Gao and Wormald~\cite{ZGW} proved that the number
of triangles in random $d$-regular graphs is asymptotically normal for 
$d=o(n^{2/7})$, and this was improved to $d=O(n^{1/2})$ by P.~Gao~\cite{Gao}.

\subsection{Markov chains}\label{sec:chain}

For $G\in\Gnd$, 
let $\Nb(v)=\Nb_G(v)$ denote the set of neighbours of $v\in [n]$
and recall that $t(G)$ denotes the number of triangles in $G$. 

We now introduce the Metropolis switch chain with parameter $\lambda\geq 1$: the transition matrix of this chain is shown in 
Figure~\ref{fig:switch-metropolis-unmodified}.
The standard switch chain corresponds to choosing $\lambda=1$.

\begin{figure}[ht!]
\begin{center}
\hspace{\dimexpr-\fboxrule-\fboxsep\relax}\fbox{
\begin{minipage}{0.95\textwidth}
\begin{tabbing}
    At the current state $G\in \Gnd$ do\\
    X \= \kill
    \> choose a pair of non-adjacent edges $F=\{a_1a_2,\, a_3a_4\}$ uniformly at random \\
    \>  choose a perfect matching $F'$ of $\{a_1,a_2,a_3,a_4\}$ uniformly at random\\
    \> let $H:= ([n],E')$ where $E':= \big(E(G) \setminus F\big)\cup F'$ \\
    X \= \kill
    \>  if $F'\cap \big(E(G)\setminus F\big) = \es$ then\\
    \> X \= \kill
    \> \> the next state is $H$ with probability $\min\{ 1, \lambda^{t(H)-t(G)}\}$ \\
    \> otherwise\\
    \> \> stay at $G$\\
    \> end if\\
    end
\end{tabbing}
\end{minipage}}
\caption{The transition procedure of the Metropolis switch chain with parameter~$\lambda$ on $\Gnd$}
\label{fig:switch-metropolis-unmodified}
\end{center}
\end{figure}

Let $P_\lambda$ denote the transition matrix of the Metropolis switch chain.
At every step we have $P_\lambda(G,G)\geq 1/3$, since we choose $F'=F$ with probability $1/3$. 
Hence the Metropolis switch chain is aperiodic, and it is known that 
all graphs with a given degree sequence are connected by switches~\cite{fulkerson,LMM}. 
In other words, the Metropolis switch chain is irreducible on $\Gnd$ for all $n$ and $\dsq$.

Define the probability distribution $\pi_\lambda$ by $\pi_\lambda(G) = \lambda^{t(G)}/Z_\lambda(\dsq)$ for
all $G\in \Gnd$, where
\[ Z_\lambda(\dsq) = \sum_{H\in \Gnd} \lambda^{t(H)}\]
is the normalising factor.
Let $a(\dsq)$ be the number of unordered pairs of non-incident distinct edges
in any element of $\Gnd$, given by
\begin{equation}
\label{ad}
a(\dsq) = \binom{M/2}{2} - \frac{M_2}{2}.
\end{equation}
If $P_\lambda(G,H)>0$
and $G\neq H$ then, assuming without loss of generality that $t(H)\leq t(G)$,
we have
\[ \pi_\lambda(G)\, P_\lambda(G,H) = \frac{\lambda^{t(G)}}{Z_\lambda(\dsq)}\, \frac{1}{3 a(\dsq)}\, \lambda^{t(H)-t(G)} =
      \frac{\lambda^{t(H)}}{Z_\lambda(\dsq)}\, \frac{1}{3 a(\dsq)} = \pi_\lambda(H)\, P_\lambda(H,G).\]
Hence the Metropolis switch chain satisfies the detailed balance equations with respect to $\pi_\lambda$,
so $\pi_\lambda$ is the unique stationary distribution of the chain. 

\begin{figure}[ht!]
\begin{center}
\hspace{\dimexpr-\fboxrule-\fboxsep\relax}\fbox{
\begin{minipage}{0.95\textwidth}
\begin{tabbing}
    At the current state $G\in \Gnd$ do\\
    X \= \kill
    \> choose a pair of non-adjacent edges $F=\{a_1a_2,\, a_3a_4\}$ uniformly at random \\
    \>  choose a perfect matching $F'$ of $\{a_1,a_2,a_3,a_4\}$ uniformly at random\\
    \> let $H:= ([n],E')$ where $E':= \big(E(G) \setminus F\big)\cup F'$ \\
    X \= \kill
    \>  if $F'\cap \big(E(G)\setminus F\big) = \es$ then\\
    \> X \= \kill
    \> \> the next state is $H$ with probability $\min\{ 1, \lambda^{\min\{t(H),\nu\}-\min\{t(G),\nu\}}\}$ \\
    \> otherwise\\
    \> \> stay at $G$\\
    \> end if\\
    end
\end{tabbing}
\end{minipage}}
\caption{The transition procedure of the modified Metropolis switch chain with parameter~$\lambda$ on $\Gnd$, where $\nu$ is defined in (\ref{eq:nu-def})}
\label{fig:switch-metropolis}
\end{center}
\end{figure}

For technical reasons, we must also introduce a modified Metropolis switch chain, obtained by 
replacing $t(H)$ by $\min\{t(H),\nu\}$ and replacing $t(G)$ by $\min\{ t(G),\nu\}$ in
the transition procedure Figure~\ref{fig:switch-metropolis-unmodified},
where 
\begin{equation} 
\label{eq:nu-def}
\nu=\nu(n):= \log n/\log \log n.
\end{equation}
Note that this does not change the set of transitions, but ``dampens'' the acceptance probability for states with
a very high number of triangles. (Here and throughout the paper, $\log$ denotes the natural logarithm.)
The transition procedure of the modified Metropolis chain with
parameter $\lambda\geq 1$ is given in Figure~\ref{fig:switch-metropolis}.

Adapting the detailed balance argument given above shows that the modified Metropolis switch chain has stationary distribution $\hat{\pi}_\lambda$ which assigns to graph $G\in\Gnd$ a probability proportional to
$\lambda^{\min\{ t(G),\nu\}}$.  We will see in Section~\ref{sec:distribution} that the
modified Metropolis switch chain is polynomial-time
indistinguishable from the Metropolis switch chain when $\lambda$ and $\mu$ are
small enough: see Remark~\ref{rem:indistinguishable}.

We can also apply the same machinery to the triangle switch chain which,
recall, only performs transitions that change the set of triangles in the graph.
The transition procedure of the modified Metropolis \ds\ chain is shown in 
Figure~\ref{fig:modified-triangle-switch-code}. (As above, the word 
``modified'' indicates the use of $\min\{ t(G),\nu\}$, rather than $t(G)$.)
We prove in Section~\ref{sec:irreducible} below that the modified Metropolis \ds\ chain 
is irreducible, and hence ergodic, provided the minimum degree $d_n\geq 3$.
The stationary distribution of the modified Metropolis \ds\ chain is $\hat{\pi}_\lambda$,
by detailed balance again.
%
%The transition procedure of the \ds\ chain with parameter~$\lambda\geq 1$, is shown in Fig.~\ref{fig:triangle-switch-code}.
%
%\begin{figure}[ht!]
%\begin{center}
%\hspace{\dimexpr-\fboxrule-\fboxsep\relax}\fbox{
%\begin{minipage}{0.95\textwidth}
%\begin{tabbing}
    %At the current state $G\in \Gnd$ do\\
    %X \= \kill
    %\> choose a pair of non-adjacent edges $F=\{a_1a_2,\, a_3a_4\}$ uniformly at random \\
    %\>  choose a perfect matching $F'$ of $\{a_1,a_2,a_3,a_4\}$ uniformly at random\\
    %\> let $H:= ([n],E')$ where $E':= \big(E(G) \setminus F\big)\cup F'$ \\
    %X \= \kill
    %\>  if $F'\cap \big(E(G)\setminus F\big) = \es$ and\\
    %\> \phantom{XXXXXX}
     %$\big(\Nb(a)\cap \Nb(a')\big)\setminus \{a_1,a_2,a_3,a_4\}\neq \es$ for some $aa'\in F\cup F'$ then\,\, \\
    %\> X \= \kill
    %\> \> the next state is $H$ with probability $\min\{ 1, \lambda^{t(H)-t(G)}\}$ \\
    %\> otherwise\\
    %\> \> stay at $G$\\
    %\> end if\\
    %end
%\end{tabbing}
%\end{minipage}}
%\caption{The transition procedure of the \ds\ chain with parameter~$\lambda$ on $\Gnd$}
%\label{fig:triangle-switch-code}
%\end{center}
%\end{figure}

\begin{figure}[ht!]
\begin{center}
\hspace{\dimexpr-\fboxrule-\fboxsep\relax}\fbox{
\begin{minipage}{0.95\textwidth}
\begin{tabbing}
    At the current state $G\in \Gnd$ do\\
    X \= \kill
    \> choose a pair of non-adjacent edges $F=\{a_1a_2,\, a_3a_4\}$ uniformly at random \\
    \>  choose a perfect matching $F'$ of $\{a_1,a_2,a_3,a_4\}$ uniformly at random\\
    \> let $H:= ([n],E')$ where $E':= \big(E(G) \setminus F\big)\cup F'$ \\
    X \= \kill
    \>  if $F'\cap \big(E(G)\setminus F\big) = \es$ and\\
    \> \phantom{XXXXXX}
     $\big(\Nb(a)\cap \Nb(a')\big)\setminus \{a_1,a_2,a_3,a_4\}\neq \es$ for some $aa'\in F\cup F'$ then\,\, \\
    \> X \= \kill
    \> \> the next state is $H$ with probability $\min\{ 1, \lambda^{\min\{t(H),\nu\}-\min\{t(G),\nu\}}\}$ \\
    \> otherwise\\
    \> \> stay at $G$\\
    \> end if\\
    end
\end{tabbing}
\end{minipage}}
\caption{The transition procedure of the modified Metropolis \ds\ chain with parameter~$\lambda$ on $\Gnd$, where $\nu$ is defined in (\ref{eq:nu-def})}
\label{fig:modified-triangle-switch-code}
\end{center}
\end{figure}

Comparing the transition procedure in Fig.~\ref{fig:modified-triangle-switch-code} with the \ds\ from Fig.~\ref{fig:triswitch}, we see that taking $aa' = a_1a_3$ satisfies the condition
     $\big(\Nb(a)\cap \Nb(a')\big)\setminus \{a_1,a_2,a_3,a_4\}\neq \es$.

\subsection{Our results}\label{sec:our-results}

Our first result shows that if the activity $\lambda$ and the maximum degree
are sufficiently small, then the modified Metropolis switch chain
is rapidly mixing whenever the (standard) switch chain is rapidly mixing.
The proof uses the ``two-stage direct canonical path construction'' method
from~\cite{CDGH}. 

\begin{thm}
\label{thm:Metropolis-switch}
Let $\mathcal{D}$ be a family of graphical sequences such that
for all $\dsq\in\mathcal{D}$ of length~$n$ we have $d_1\geq d_2 \geq \cdots \geq d_n\geq 3$.
Let $\lambda=\lambda(n)\geq 1$ be a function of $n$.
Suppose that there exists an $\alpha\in(0,1)$ such that $\lambda\mu(\dsq)\leq \log^\alpha n$
for every $\dsq\in\mathcal{D}$ of length $n$ and all large enough $n$.
If the switch chain is rapidly mixing on $\Gnd$ for all $\dsq\in\mathcal{D}$ then
the same is true for the modified Metropolis switch chain with parameter~$\lambda$.
\end{thm}

We will use Theorem~\ref{thm:Metropolis-switch} as a convenient stepping-stone in our proof of Theorem~\ref{thm:rapid},
stated below.

Next we turn to the \ds\ chain.
Recall that a Markov chain is irreducible if the underlying graph of the Markov
chain is connected.  This graph has an edge corresponding to each transition
which occurs with positive probability. Hence, if two chains have the same
set of positive-probability transitions and one is irreducible then so is
the other.

We show that the \ds\ chain (or, equivalently, the modified Metropolis \ds\ chain) is irreducible on $\Gnd$. This was proved for regular graphs in~\cite{CDG21}, by showing that (a) every $d$-regular graph can be transformed by a sequence of \ds es into the disjoint union of $\lfloor n/(d+1)\rfloor$ copies of $K_{d+1}$ and one other small component, called a ``fragment'', and (b) 
using \ds es, it is possible to %specify the vertices in each copy of $K_{d+1}$ 
transform any set of $\lfloor n/(d+1)\rfloor$ components, each isomorphic to $K_{d+1}$, into any other such set of components,
and to transform fragments into other fragments.
This approach is not suitable for proving rapid mixing of the \ds\ chain by existing methods.
Here we will use the ``two-stage direct canonical path construction'' method
from~\cite{CDGH} to prove rapid mixing, and this informs our approach
to proving irreducibility.

It is known that the set of graphs with a given degree sequence is connected under switches~\cite{fulkerson,LMM}. In other words, the switch chain is irreducible on $\Gnd$ for all $n$ and $\dsq$.
Hence in order to prove that the \ds\ chain is irreducible on $\Gnd$, it suffices
to show that every switch $(G,H)$ can be simulated using a sequence of
\ds es. To do this we will specify a sequence
\[ \sigma_{GH}:\quad  G = X_0, X_1,\ldots, X_{\kappa} = H\]
such that $(X_i,X_{i+1})$ is a \ds\ for $i=0,\ldots, \kappa -1$.
The sequence $\sigma_{GH}$ is called a \emph{simulation path} for $(G,H)$
of length $\kappa$.
Rather than simply prove existence of such a path, we will specify one uniquely
for each possible switch $(G,H)$, and show that all our simulation paths have length
	at most 5.
We will do this for degree sequences with minimum degree at least~3, proving the
following.

\begin{thm} \label{thm:irreducible}
Let $\dsq$ be a graphical degree sequence with $d_1\geq d_2 \cdots \geq d_n\geq 3$
and let $\lambda > 0$ be a constant.
Then the modified Metropolis \ds\ chain with parameter~$\lambda$ is irreducible on $\Gnd$.
\end{thm}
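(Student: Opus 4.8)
The plan is to reduce irreducibility of the \ds\ chain to irreducibility of the ordinary switch chain, which is known~\cite{LMM,taylor}. As the excerpt already indicates, it suffices to show that every single switch $(G,H)$ — where $H$ is obtained from $G$ by deleting two non-incident edges $a_1a_2,a_3a_4$ and inserting $a_1a_3,a_2a_4$ (or $a_1a_4,a_2a_3$) — can be realised as a sequence of at most five \ds es, provided $d_n\ge 3$. The subtlety is that a plain switch need not change the triangle set, so it is not itself a legal \ds; and even when it does change the triangle set, the intermediate graphs in a naive decomposition may not be simple, or the individual steps may not qualify as \ds es (they must create or destroy a triangle through a common neighbour of the new edge outside $\{a_1,a_2,a_3,a_4\}$).

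The key steps I would carry out are as follows. First, fix a switch $(G,H)$ with the four endpoints $a_1,a_2,a_3,a_4$ and the two pairs of edges. Second, introduce an auxiliary vertex $v\notin\{a_1,a_2,a_3,a_4\}$ to play the role of the apex in Fig.~\ref{fig:triswitch}; the condition $d_n\ge 3$ is what guarantees enough edges at the $a_i$ (and enough vertices elsewhere) that such a $v$ with the right adjacencies can always be found, possibly after first doing one or two preparatory \ds es to route an edge into position. Third, split into cases according to which of the potential edges $va_1,va_2,va_3,va_4$ and $a_1a_3,a_2a_4$ (and the alternative matching) are present or absent in $G$, and according to how many triangles the target switch creates or destroys. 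In each case, write down an explicit walk $G=X_0,X_1,\dots,X_\kappa=H$ with $\kappa\le 5$, verifying at each step that (i) $X_{i+1}$ differs from $X_i$ by a switch on four vertices, (ii) $X_{i+1}$ is simple, and (iii) the step changes the triangle set — i.e. some new edge $aa'$ has a common neighbour of $a,a'$ outside the four switch-vertices — so that it is a genuine \ds. Fourth, since the \ds\ chain's transition probabilities are all positive between states joined by a legal \ds\ (the Metropolis acceptance $\min\{1,\lambda^{t(H)-t(G)}\}$ is positive because $\lambda\ge 1$), the existence of the simulation path gives $P_\lambda^{(\kappa)}(G,H)>0$; composing over a switch-path between any two graphs in $\Gnd$ yields irreducibility.

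The main obstacle is the case analysis in step three: one must handle the full range of configurations — the edges $a_1a_3$, $a_2a_4$ already present or not, the apex $v$ already adjacent to some subset of $\{a_1,a_2,a_3,a_4\}$, the switch being triangle-neutral in $G$ so that it must be "activated" by first borrowing a triangle, and degenerate situations where the obvious choice of $v$ coincides with a vertex one needs to keep free. The bookkeeping needed to keep every intermediate graph simple while ensuring every step is triangle-changing, and to confirm that five \ds es always suffice (rather than, say, six), is where the real work lies; the availability of a third edge at each $a_i$ coming from $d_n\ge 3$ is the resource that makes it go through, and isolating exactly how that hypothesis is used in the tightest case is the crux of the argument. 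By contrast, the reduction to the switch chain and the positivity-of-transition-probabilities argument are routine.
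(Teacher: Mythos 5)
Your overall strategy is exactly the one the paper follows: invoke the known irreducibility of the switch chain~\cite{LMM,taylor}, and show that every single switch $(G,H)$ can be simulated by a sequence of at most five \ds es, each step being checked to be simple and genuinely triangle-changing. However, what you have written is an outline of that strategy rather than a proof of its central claim. The entire difficulty of the theorem is concentrated in your ``step three'' and ``main obstacle'' paragraphs, which you explicitly defer: you assert that $d_n\geq 3$ guarantees that an apex $v$ with the right adjacencies ``can always be found, possibly after first doing one or two preparatory \ds es'', but that assertion is precisely what has to be proved, and it is not at all routine. In the hard configurations there is \emph{no} vertex adjacent to two of the switch vertices, no diagonal present, and no triangle meeting $\{a_1,a_2,a_3,a_4\}$ at all (indeed the two switch edges may lie in different components, so no single switch-local apex can exist); the paper handles this by a separate triangle-planting device (Lemma~\ref{lem:plant}), applied in some cases to $G$ with the edge $a_1a_2$ deleted, which plants a triangle possibly disjoint from $D=\{a_1,a_2,a_3,a_4\}$, then runs the two-step simulation of Lemma~\ref{lem:pivot}(IV) and undoes the planting by an inverse \ds. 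Verifying that this works — in particular that each intermediate move really changes the triangle set, that the required non-edges are present, and that five steps always suffice — forces the girth-style case split ($4$-cycle / $5$-cycle / longer) inside Lemma~\ref{lem:plant}(ii) and the delicate subcases (IXa)--(IXc) of Lemma~\ref{lem:no-pivot}, where all four switch vertices have degree exactly~$3$ and their external neighbourhoods are pairwise far apart. None of this is supplied or even sketched at a level where one could check that it closes; ``borrowing a triangle'' is named but not constructed, and the bound of five is not justified.

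So the gap is concrete: the reduction to the switch chain and the positivity of the Metropolis acceptance probabilities (your steps one and four) are indeed routine, but the theorem is the statement that the simulation paths of length at most five \emph{exist} for every switch under the sole hypothesis $d_n\geq 3$, and your proposal takes this for granted. To complete the argument you would need (i) a classification of switches by which diagonals are present and by whether some pair of switch vertices has a common external neighbour or some switch vertex lies on an external triangle (the paper's cases (I)--(VI)), each with an explicit verified \ds\ sequence, and (ii) a planting/unplanting mechanism for the residual case in which the switch is isolated from all triangles, including the degree-$3$ subcases where the planted triangle cannot touch $D$ and the minimum-degree hypothesis is used to find the auxiliary vertices ($w$ in case (VI), $w_1,w_2,z$ in case (IX)). Without (i) and (ii) worked out, the proposal does not yet constitute a proof.
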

Note that this is not true in general if $d_n<3$, see~\cite{CDG21}.
The above result can be viewed as a \emph{reconfiguration} result on $\Gnd$, generalising that
given in~\cite{CDG21}. Reconfiguration problems have
been a topic of much recent interest in graph theory.
See Nishimura~\cite{nishimura} for an introduction.
Since Theorem~\ref{thm:irreducible} depends only on the set of transitions of the Markov
chain, and not their probabilities, the value of $\lambda$ is immaterial
as long as it is positive.

Our third main result gives a condition under which we can deduce rapid mixing
of the modified \ds\ chain with parameter~$\lambda$ from rapid mixing of the switch chain. Recall the definition of $\mu=\mu(\dsq)$ from (\ref{eq:mu-def}).

\begin{thm} \label{thm:rapid}
Let $\mathcal{D}$ be a family of graphical degree sequences such that
for all $\dsq\in\mathcal{D}$ of length~$n$ we have $d_1\geq d_2 \geq \cdots \geq d_n\geq 3$.
	Let $\lambda=\lambda(n)\geq 1$ be a function of $n$.
	Suppose that there exists an $\alpha\in(0,1)$ such that $\lambda\mu(\dsq)\leq \log^\alpha n$
	for every $\dsq\in\mathcal{D}$ of length $n$ and all large enough $n$.
If the switch chain is rapidly mixing on $\Gnd$ for all $\dsq\in\mathcal{D}$ then
the same is true for the modified \ds\ chain with parameter~$\lambda$.
\end{thm}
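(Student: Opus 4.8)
The plan is to transfer rapid mixing from the switch chain to the modified \ds\ chain using the \emph{comparison method} for Markov chains, exploiting the simulation paths $\sigma_{GH}$ produced by Theorem~\ref{thm:irreducible}. Concretely, I would view each transition $(G,H)$ of the switch chain as being routed along the simulation path $\sigma_{GH} = X_0,\ldots,X_\kappa$ in the \ds\ chain, which has length $\kappa\le 5$. The comparison inequality then bounds the spectral gap (or relaxation time) of the \ds\ chain in terms of that of the switch chain, provided one controls a \emph{congestion} quantity: for each edge $(X,Y)$ of the \ds\ chain, the total weight $\pi_{\mathrm{sw}}(G)P_{\mathrm{sw}}(G,H)$ of switch transitions whose simulation path uses $(X,Y)$, divided by $\pi(X)P_\lambda(X,Y)$, must be bounded by a polynomial in $n$. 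Since both chains are Metropolis--Hastings type with $O(a(\dsq))$-sized transition neighbourhoods and Metropolis acceptance factors, and the simulation paths are all of bounded length, the combinatorial part of the congestion (how many $(G,H)$ route through a given $(X,Y)$) should be polynomially bounded by an injectivity/encoding argument: given $(X,Y)$ and a bounded amount of auxiliary information (the position $i$ of $(X,Y)$ in the path, plus the few edges being swapped), one should be able to reconstruct $(G,H)$, so each edge is used by at most $\poly(n)$ switch transitions.

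The steps, in order, are: (1) recall the comparison theorem (e.g.\ of Diaconis--Saloff-Coste, or the multicommodity/canonical-path version) in a form that bounds the relaxation time of the \ds\ chain by $\tau_{\mathrm{rel}}(\text{switch})$ times the congestion and the maximum path length; (2) check that the two stationary distributions are close enough — here is where the hypothesis $\lambda\mu(\dsq)\le\log^\alpha n$ and the \emph{modified} chain enter, since truncating $t(G)$ at $\nu=\log n/\log\log n$ ensures $\pi(X)$ ranges over a set whose max/min ratio is $\lambda^\nu\le (\log^\alpha n)^{\nu}/\mu^\nu$-type bounds, i.e.\ quasi-polynomial, but one wants it genuinely $\poly(n)$; this forces one to argue that the relevant $t$-values along simulation paths stay within $O(\nu)$ of each other and that $\lambda^\nu \le n^{o(1)}$ when $\lambda\mu\le\log^\alpha n$ and $\mu$ is not too small, or alternatively to absorb a $\lambda^{O(\kappa)}=\lambda^{O(1)}$ factor per transition which is harmless since $\lambda^{O(1)}\le(\log^\alpha n/\mu)^{O(1)}=\poly(n)$; (3) bound the congestion: compare $\pi_{\mathrm{sw}}(G)P_{\mathrm{sw}}(G,H)$ with $\pi(X)P_\lambda(X,Y)$ — the normalising constants $Z_\lambda$ and $|\Gnd|$ cancel up to the $\lambda^{t(\cdot)}$ weights, the $1/(3a(\dsq))$ versus $1/a(\dsq)$-type prefactors differ by a constant, and the Metropolis acceptance probabilities along the path contribute at most $\lambda^{O(1)}$; (4) bound the number of switch transitions routed through a fixed \ds\ transition $(X,Y)$ by a polynomial via the encoding/reconstruction argument, using that simulation paths are deterministically and explicitly defined in the proof of Theorem~\ref{thm:irreducible}; (5) assemble: relaxation time of the modified \ds\ chain is at most $\poly(n)$ times that of the switch chain, and since the switch chain is rapidly mixing on $\Gnd$ for all $\dsq\in\mathcal D$, so is the modified \ds\ chain, after converting relaxation time to total-variation mixing time using $\log(1/\pi_{\min})$, which is polynomial in $n$ because $|\Gnd|\le n^{M}$ and $\lambda^\nu\le\poly(n)$.

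The main obstacle I expect is step (4) together with the interaction between congestion and the stationary-weight ratios in step (3): one must verify that the explicitly constructed simulation paths from Theorem~\ref{thm:irreducible} are ``spread out'' enough that no single \ds\ transition $(X,Y)$ is overloaded, which requires a careful case analysis mirroring the (five-case) construction of the simulation paths, and simultaneously that the intermediate states $X_1,\ldots,X_{\kappa-1}$ do not have triangle counts wildly different from $t(G)$ and $t(H)$ — otherwise the ratio $\pi(G)/\pi(X_i)$ could be as large as $\lambda^{\nu}$ in the worst case and, while still $\poly(n)$ under the hypothesis, one needs this to be uniform. A secondary technical point is that $P_\lambda(X,Y)$ in the denominator could be small if the \ds\ $(X,Y)$ is triangle-destroying and $t(X)-t(Y)$ is large; but on a simulation path of length~$\le 5$ the change $|t(X_i)-t(X_{i+1})|$ is $O(1)$, so the acceptance probability is at least $\lambda^{-O(1)}=1/\poly(n)$, which resolves it. Finally, one should double-check that the $\mu$-not-too-small issue (needed so that $\lambda=\lambda\mu/\mu\le\poly(n)$) either follows from the standing assumption $M_2\ge M$ (giving $\mu=M_2^3/M^3\ge 1/\poly(n)$ is \emph{not} immediate, so) or is handled by noting $\lambda^{O(1)}\le(\log^\alpha n)^{O(1)}\cdot\mu^{-O(1)}$ and that when $\mu$ is tiny the chain is essentially the uniform switch chain anyway — this edge case may need its own short paragraph.
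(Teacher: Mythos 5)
Your plan is essentially the paper's own proof: the comparison/canonical-path transfer you describe is exactly the two-stage direct canonical path construction (Theorem~\ref{thm:CDGH}) applied with the simulation paths of Lemmas~\ref{lem:pivot} and~\ref{lem:no-pivot} (so $\ell(\Sigma)=5$), your encoding/reconstruction step is precisely the case-by-case bound on $B(\Sigma)$ in Lemma~\ref{lem:count}, and your truncation computation ($\nu=\log n/\log\log n$, $\lambda^\nu\le n^\alpha$, hence $D(\cM,\cM')\le n^\alpha$ and $R(\cM,\cM')\le n^{2\alpha}$) is carried out in section~\ref{sec:modify}. The one edge case you flag, that $\mu$ might be too small, does not arise: since $d_n\ge 3$ we have $M_2\ge M$ (Lemma~\ref{lem:dbar}) and so $\mu\ge \nicefrac16$, giving $\lambda\le 6\log^\alpha n$ directly.
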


The switch chain is known to be rapidly mixing for large classes of graphical sequences, in particular regular sequences~\cite{CDG07,GS} and all graphical sequences from P-stable families~\cite{hungarians}. By Theorem~\ref{thm:rapid}, the \ds\ chain is also rapidly
mixing for these graphical sequences, whenever $\lambda$ and $\dsq$ satisfy the conditions of the theorem.  A special case is stated below.
In particular, this corollary implies that if the maximum degree $d_1$ is bounded above
by a positive constant $C$ for all $\dsq\in\mathcal{D}$
then the \ds\ chain is rapidly mixing for all $\dsq\in\mathcal{D}$.

\begin{cor}\label{cor:very-sparse}
Let $\mathcal{D}$ be a family of graphical sequences such that
there exists some constant $\alpha\in (0,1)$ such that
$ (\log n)^{\alpha/3} \geq d_1\geq d_2 \cdots \geq d_n \geq 3$ 
for all $\dsq\in\mathcal{D}$. Then the modified \ds\ chain with 
parameter $\lambda$ is rapidly mixing for any constant 
$\lambda >1$ and all large enough $n$.
\end{cor}

\begin{proof}
Greenhill and Sfragara~\cite[Theorem~1.1]{GS} proved that the switch chain
is rapidly mixing for any family $\mathcal{D}$ of degree sequences which satisfy 
$d_n\geq 1$ and $3\leq d_1\leq \nfrac{1}{3}\sqrt{M}$.  
These conditions hold under our assumptions when $n$ is large enough. 
Furthermore,
\[ \lambda \mu(\dsq)\leq \dfrac{\lambda}{6}\, d_1^3 \leq \dfrac{\lambda}{6}\, \log^{\alpha} n \leq
\log^{\alpha'}n
  \]
for some slightly larger constant $\alpha'\in (0,1)$, when $n$ is large enough.  
Applying Theorem~\ref{thm:rapid} completes the proof.
\end{proof}

\bigskip

We now describe the structure of the rest of the paper.
In Section~\ref{sec:background-paths} we give some background on the use of canonical paths
for bounding the mixing time of Markov chains and describe 
the two-stage direct canonical path construction method. This machinery is used in 
Section~\ref{sec:mod-switch} to prove Theorem~\ref{thm:Metropolis-switch}.
Then in Section~\ref{sec:irreducible} we prove the irreducibility 
result for the \ds\ chain, Theorem~\ref{thm:irreducible},
and in Section~\ref{sec:rapid} we prove the rapid mixing
result for the \ds\ chain, Theorem~\ref{thm:rapid}. 
Finally in Section~\ref{sec:distribution} we will analyse the distribution of
the number of triangles under both the uniform distribution on $\Gnd$,
and the Gibbs distribution $\pi_{\lambda}$ on $\Gnd$.
In particular, the results of Section~\ref{sec:distribution} imply that
if $M_2\geq M$, the Metropolis switch chain and the modified Metropolis switch chain 
are polynomial-time indistinguisable under the assumptions of Theorem~\ref{thm:Metropolis-switch},
and similarly for the Metropolis \ds\ chain and the modified Metropolis \ds\ chain 
under the assumptions of Theorem~\ref{thm:rapid}.  (The Metropolis \ds\ chain
has transition procedure which is obtained from Figure~\ref{fig:modified-triangle-switch-code} by replacing 
$\min\{ t(H),\nu\}$ with $t(H)$ and replacing $\min\{ t(G),\nu\}$ with $t(G)$.)
See Remark~\ref{rem:indistinguishable}. 

\subsection{Background on canonical paths and mixing time}\label{sec:background-paths}

%For further details, see for example Jerrum~\cite{jerrum03lectures} or Sinclair~\cite{sinclair}.

Let $\cM$ be an ergodic Markov chain with finite state space
$\Omega$, transition matrix $P$ and stationary distribution $\pi$.
Let $\mathcal{G}=(\Omega,E(\cM))$ be the graph underlying $\cM$,
so  the edge set of $\mathcal{G}$
corresponds to (non-loop) transitions of $\cM$.

Given two probability distributions $\sigma$, $\rho$ on $\Omega$,
the \emph{total variation distance} between them, denoted $\dtv(\sigma,\rho)$,
is defined by
\begin{equation}
\label{eq:dtv}
\dtv(\sigma,\rho) = \tfrac{1}{2}\sum_{x\in\Omega} |\sigma(x) - \rho(x)|
  = \max_{A\subseteq \Omega}\, (\sigma(A) - \rho(A)).
\end{equation}
The \emph{mixing time} $\tau(\varepsilon)$ of $\cM$ is given by
\[ \tau(\varepsilon)  = \max_{x\in \Omega} \,\min\,\{t \ge 0\ |\ \dtv(P^t_x,
\pi) \le \varepsilon\},\]
where $P^t_x$ is the distribution of the random state $X_t$ of the
Markov chain after $t$ steps with initial state $x$.

Let the eigenvalues of $P$ be
\[ 1 = \mu_0> \mu_1 \geq \mu_2 \cdots \geq \mu_{N-1} > -1 \]
and define $\mu_\ast = \max\{ \mu_1,\, |\mu_{N-1}|\}$.
Then, see for example~\cite[Proposition 1]{sinclair},
the mixing time of $\cM$ satisfies
\begin{equation}
\label{mixing}
 \tau(\varepsilon) \leq
 (1-\mu_\ast)^{-1}
 \left(\log(1/\pi^\ast) + \log(\varepsilon^{-1}) \right)
\end{equation}
where $\pi^\ast = \min_{x\in\Omega}\pi(x)$ is the smallest stationary probability.
In applications we often focus on the second-largest eigenvalue $\mu_1$,
if necessary by making the Markov chain lazy (replacing the transition matrix $P$
	by $(I+P)/2$ to ensure that all eigenvalues are nonnegative).
Alternatively, it follows from a result of Diaconis and Saloff-Coste~\cite[p.~702]{DSC} that the smallest eigenvalue of $\cM$ satisfies
\begin{equation}
\label{eq:smallest}
   (1 + \mu_{N-1})^{-1} \leq  \max_{x\in\Omega} \frac{1}{2\, P(x,x)}.
   \end{equation}

Jerrum and Sinclair~\cite{jerrum89conductancepermanent} introduced the
\emph{canonical path} method for bounding $\mu_1$.
For each pair $x,y\in\Omega$, we choose a path $\gamma_{xy}$
from $x$ to $y$ in the graph $\mathcal{G}$ underlying the Markov chain.
A \emph{canonical} choice of paths seeks to produce a small value of
the critical parameter $\bar{\rho}(\Gamma)$, the \emph{congestion} of the chosen set of paths
$\Gamma = \{ \gamma_{xy} \mid x,y\in\Omega \}$, defined by
\[ \bar{\rho}(\Gamma) = \max_{e\in E(\cM)}  Q(e)^{-1} \sum_{\substack{x,y\in\Omega\\ e\in\gamma_{xy}}}\, \pi(x)\pi(y)|\gamma_{xy}|
\]
where $Q(e) = \pi(u)\, P(u,v)$ when $e=uv\in E(\cM)$.
Then
\begin{equation}
 (1-\mu_1)^{-1}\leq \bar{\rho}(\Gamma),
\label{canonicalpath}
\end{equation}
so a low congestion can lead to a good bound on the mixing time, using (\ref{mixing}).
See~\cite[Theorem 5]{sinclair} or~\cite{jerrum03lectures}.

In~\cite{CDGH}, the \emph{two-stage direct canonical path construction method} was
described.  Here we have two ergodic Markov chains $\cM$ and $\cM'$,
with state spaces $\Omega$ and $\Omega'$, respectively.  We assume that we
have a set of canonical paths for $\cM'$. Furthermore, for each
each transition $(Y,Z)$ of $\cM'$, we specify a (canonical)
sequence $\sigma_{YZ}$ of transitions of $\cM$ to simulate $(Y,Z)$:
\[ \sigma_{YZ}:\quad  Y = X_0,X_1,\ldots, X_\kappa = Z\]
such that $(X_i,X_{i+1})$ is a transition of $\cM$.
We call $\sigma_{YZ}$ a $(\cM,\cM')$-\emph{simulation path} and
let $\Sigma = \{ \sigma_{YZ}\mid (Y,Z)\in E(\cM')\}$ be the set
of all specified $(\cM,\cM')$-simulation paths (one for each transition
of $\cM'$).

For each transition $e\in E(\cM)$, let
\[ \Sigma(e) = \{ \sigma\in \Sigma \mid e\in\sigma\}\]
be the set of all specified simulations paths which use the transition $e$.
We define the parameters
\begin{equation}
\label{eq:parameters}
   \ell(\Sigma) = \max_{\sigma\in \Sigma} |\sigma |, \quad
    B(\Sigma) = \max_{e\in E(\cM)} |\Sigma(e)|
 \end{equation}
representing the maximum length of any simulation path, 
and the maximum
number of simulation paths through a given transition of $\cM$,
respectively.  Finally, the  \emph{simulation gap} between the two chains is defined by
\begin{equation}
\label{eq:simulationgap}
D(\cM,\cM') =
  \max_{\substack{uv\in E(\cM)\\zw\in E(\cM')}}
     \frac{\pi'(z) P'(z,w)}{\pi(u) P(u,v)}
\end{equation}

and the \emph{stationary ratio} is defined by
\begin{equation}
\label{eq:stationaryratio}
R(\cM,\cM') =
  \max_{x,y\in \Omega}
     \frac{\pi(x)\, \pi(y)}{\pi'(x) \pi'(y)}= \left(\max_{x\in \Omega}\frac{\pi(x)}{\pi'(x)}\right)^2.
\end{equation}

Note that $\pi$ and $\pi'$ are both positive on their domains as
$\cM$, $\cM'$ are ergodic and reversible.

The following is a slight simplification of~\cite[Theorem~2.1]{CDGH},
tailored to the case that $\Omega=\Omega'$.
(If $\Omega=\Omega'$ then the surjection $h$ mentioned
in~\cite[Theorem~2.1]{CDGH} can be taken to be the identity.)
We will also take the opportunity to correct a (fortunately) minor
error in~\cite[Theorem~2.1]{CDGH}, where the factor $R(\cM,\cM')$ was omitted.

\begin{thm} %\emph{\cite[Theorem~2.1]{CDGH}}
\label{thm:CDGH}
Let $\cM$ and $\cM'$ be two ergodic Markov chains on a
set $\Omega$. Let $\Gamma'$ be a set of canonical paths in $\cM'$
and let $\Sigma$ be a set of $(\cM,\cM')$-simulation paths.
Then there exists a set of canonical paths in $\cM$ whose congestion
satisfies
\[ \bar{\rho}(\Gamma)\leq D(\cM,\cM')\, R(\cM,\cM')\, \ell(\Sigma)\,
         B(\Sigma)\, \bar{\rho}(\Gamma').
	 \]
\end{thm}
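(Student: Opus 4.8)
The plan is to build the canonical paths in $\cM$ by concatenating simulation paths along the canonical paths of $\cM'$, and then push the congestion bound through by a straightforward double-counting argument. For each pair $x,y\in\Omega$, let $\gamma'_{xy}: x = z_0, z_1,\ldots,z_m = y$ be the given canonical path in $\cM'$, and define $\gamma_{xy}$ to be the concatenation $\sigma_{z_0z_1}\sigma_{z_1z_2}\cdots\sigma_{z_{m-1}z_m}$ of the corresponding simulation paths. This is a walk in $\cM$ from $x$ to $y$; it can be shortened to a simple path by erasing cycles, which only decreases its length and its contribution to congestion, so I will not bother. Since each of the $m = |\gamma'_{xy}|$ transitions of $\cM'$ is replaced by a simulation path of length at most $\ell(\Sigma)$, we get $|\gamma_{xy}|\le \ell(\Sigma)\,|\gamma'_{xy}|$. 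Take $\Gamma = \{\gamma_{xy} : x,y\in\Omega\}$.

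Next I would fix a transition $e = uv\in E(\cM)$ and estimate $Q(e)^{-1}\sum_{x,y:\,e\in\gamma_{xy}}\pi(x)\pi(y)|\gamma_{xy}|$. If $e\in\gamma_{xy}$ then $e$ lies in $\sigma_{zw}$ for at least one transition $(z,w)\in\gamma'_{xy}$, i.e.\ $\sigma_{zw}\in\Sigma(e)$; replacing the condition ``$e\in\gamma_{xy}$'' by a union over all such $\sigma_{zw}$ only increases the sum (a pair $x,y$ may now be counted several times), and bounding $|\gamma_{xy}|$ as above gives
\[\sum_{\substack{x,y\\ e\in\gamma_{xy}}}\pi(x)\pi(y)|\gamma_{xy}|\ \le\ \ell(\Sigma)\sum_{\sigma_{zw}\in\Sigma(e)}\ \sum_{\substack{x,y\\ (z,w)\in\gamma'_{xy}}}\pi(x)\pi(y)|\gamma'_{xy}|.\]
Now I convert the $\pi$-weights to $\pi'$-weights so the inner sum becomes something $\bar{\rho}(\Gamma')$ controls: by the definition of $R(\cM,\cM')$ we have $\pi(x)\pi(y)\le R(\cM,\cM')\,\pi'(x)\pi'(y)$, and then the defining inequality of $\bar{\rho}(\Gamma')$ applied to the transition $(z,w)$ of $\cM'$ bounds the inner sum by $R(\cM,\cM')\,\pi'(z)P'(z,w)\,\bar{\rho}(\Gamma')$. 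Finally $|\Sigma(e)|\le B(\Sigma)$ and, by the definition of the simulation gap, $\pi'(z)P'(z,w)\le D(\cM,\cM')\,\pi(u)P(u,v) = D(\cM,\cM')\,Q(e)$; assembling these and dividing by $Q(e)$ yields $\bar{\rho}(\Gamma)\le D(\cM,\cM')\,R(\cM,\cM')\,\ell(\Sigma)\,B(\Sigma)\,\bar{\rho}(\Gamma')$, since $e$ was arbitrary.

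The argument is essentially bookkeeping, so there is no deep obstacle; the one point that genuinely needs care — and precisely where $R(\cM,\cM')$ must enter, correcting its omission in the original statement — is the order of operations in the displayed step: one cannot apply $\bar{\rho}(\Gamma')$ to a sum weighted by $\pi(x)\pi(y)$ directly, but only after converting to $\pi'(x)\pi'(y)$, and that conversion costs exactly the factor $R(\cM,\cM')$. The only other things to keep straight are the harmless over-counting when $e$ appears in the simulations of several $\cM'$-transitions along $\gamma'_{xy}$ (absorbed by the union bound over $\Sigma(e)$), and the fact that the concatenated paths need not be simple (harmless, and removable by cycle erasure if desired).
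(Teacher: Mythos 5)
Your proposal is correct and follows essentially the same route as the paper: the paper's proof simply quotes the two intermediate inequalities from the proof of \cite[Theorem~2.1]{CDGH} (which is exactly this concatenation-of-simulation-paths plus double-counting argument) and then inserts the factor $R(\cM,\cM')$ via $\pi(x)\pi(y)\leq R(\cM,\cM')\,\pi'(x)\pi'(y)$, precisely where you insert it. The only difference is that your write-up rederives those inequalities in a self-contained way rather than citing them, which is fine.
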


\begin{proof}
In the proof of~\cite[Theorem~2.1]{CDGH} it is shown that
\[
\bar{\rho}(\Gamma)
\leq D(\cM,\cM')\, \ell(\Sigma)\, \max_{uv\in E(\cM)} \,
	\sum_{\substack{zw\in E(\cM')\\\sigma_{zw}\in \Sigma(uv)}}
		\frac1{\pi'(z)P'(z,w)}
	\sum_{\substack{x,y\in\Omega'\\zw\in\gamma'_{xy}}}
		\pi(x)\pi(y)\,|\gamma'_{xy}|
		\]
and that
\[
\max_{uv\in E(\cM)} \,
	\sum_{\substack{zw\in E(\cM')\\\sigma_{zw}\in \Sigma(uv)}}
		\frac1{\pi'(z)P'(z,w)}
	\sum_{\substack{x,y\in\Omega'\\zw\in\gamma'_{xy}}}
		\pi'(x)\pi'(y)\,|\gamma'_{xy}|
	\leq B(\Sigma)\, \bar{\rho}(\Gamma').
	\]
	The result follows as $\pi(x)\pi(y)\leq R(\cM,\cM')\, \pi'(x)\pi'(y)$.
\end{proof}

We remark that in the analysis of the flip chain given in~\cite{CDGH},
the above theorem is applied twice.  In one application the missing factor
$R$ is exactly~1, and in the other application it can be bounded above
by a constant close to 1 (say~2).  So the stated bound on the mixing time of the flip
chain in~\cite{CDGH} is too small by a factor of at most~2, which is of no consequence.
However, in the proof of Theorem~\ref{thm:Metropolis-switch} the ratio $R(\cM,\cM')$ becomes rather more 
significant as we compare the uniform distribution to a highly non-uniform 
distribution.

\subsection{Rapid mixing of the modified Metropolis switch chain}\label{sec:mod-switch}

We can immediately prove Theorem~\ref{thm:Metropolis-switch} using the two-stage
direct canonical path construction method.  As well as being of independent interest, this result 
forms a useful stepping-stone in our proof of Theorem~\ref{thm:rapid}.

\begin{proof}[Proof of Theorem~\ref{thm:Metropolis-switch}]\
Let $\cM_\lambda$ denote the modified Metropolis switch chain with parameter $\lambda$, as given in Figure~\ref{fig:switch-metropolis}, and let $\cM'$ denote the standard switch chain with uniform stationary
distribution: that is, $\cM'$ has transition procedure given by Figure~\ref{fig:switch-metropolis} with $\lambda=1$.  
Suppose that $\cM'$ is rapidly mixing on all degree
sequences in some family $\mathcal{D}$.  That is, there exists a polynomial
$p(n)$ such that if $\dsq\in\mathcal{D}$ and $\dsq$ has length $n$ then
the mixing time of the switch chain on $\Gnd$ is at most $p(n)$.

Using results of Sinclair~\cite{sinclair}, Guruswami~\cite[Theorem~4.9]{guruswami} 
proved that there exists a set $\Gamma'$  of canonical paths for $\cM'$
and a polynomial $q(n)$ such that $\bar{\rho}(\Gamma')\leq q(n)$ for
all $\dsq\in\mathcal{D}$ with length $n$.
Since the switch chain and the modified Metropolis switch chain have the
same set of allowable transitions, we may take each simulation path to have
length~1.  Let $\Sigma$ be the set of these simulations paths.  Then
$\ell(\Sigma)=1$ and $B(\Sigma)=1$.

The normalising factor $\hat{Z}_\lambda(\dsq)$ 
for the stationary distribution of the modified Metropolis switch chain 
satisfies
\begin{equation}
\label{eq:Z-upper}
\hat{Z}_\lambda(\dsq) = \sum_{G\in\Gnd} \lambda^{\min\{t(G),\, \nu\}}
    \leq \lambda^{\nu}\, |\Gnd|.
\end{equation}
The switch chain $\cM'$ has uniform stationary distribution over $\Gnd$
and transition probability $P'(G,H) = \frac{1}{3 a(\dsq)}$ whenever $G\neq H$
differ by a switch. 
Hence the simulation gap $D(\cM,\cM')$ satisfies
\[ D(\cM,\cM') = \max_{uv\in E(\cM)} \frac{1}{3 a(\dsq)\, |\Gnd|}\cdot \frac{3 a(\dsq)\, \hat{Z}_{\lambda}(\dsq)}{\min\{\lambda^{\min\{t(u),\nu\}},\lambda^{\min\{t(v),\nu\}}\}} \leq \lambda^{\nu}.
\]
(Since $\lambda \geq 1$, the maximum over $uv\in E(\cM)$ is obtained when
$t(u)=0$ or $t(v)=0$.)
The stationary ratio satisfies
\[ R(\cM,\cM') = \max_{x\in \Gnd} \left(\frac{|\Gnd|\, \lambda^{\min\{t(x),\nu\}}}{\hat{Z}_\lambda(\dsq)}\right)^2 \leq \left(\max_{x\in\Gnd} \lambda^{\min\{t(x),\nu\}}\right)^2 \leq \lambda^{2\nu}.
\]
Since we take $\nu = \frac{\log n}{\log \log n}$, under the assumptions of the
theorem we have 
\begin{equation}
\label{eq:upper-lambda-nu}
\lambda^{\nu} \leq \big( \log^\alpha \big)^{\log n/\log \log n} = e^{\alpha \log n} = n^\alpha.
\end{equation}
Hence, by Theorem~\ref{thm:CDGH}, there exists a set $\Gamma$ set of canonical paths for
$\cM$ (in fact $\Gamma=\Gamma'$) with congestion
\[ \bar{\rho}(\Gamma)\leq n^{3\alpha}\, \bar{\rho}(\Gamma'). \]
It follows that
\[ (1-\mu_1)^{-1}\leq \bar{\rho}(\Gamma) \leq n^{3\alpha} \, q(n).\]
Next, the modified Metropolis switch chain has self-loop probability of
at least $1/3$ on every state, so (\ref{eq:smallest}) implies that
\[ (1+\mu_{N-1})^{-1} \leq \dfrac{3}{2}.\]
Combining the two previous inequalities implies that
\[ (1 - \mu_*)^{-1}\leq n^{3\alpha}\, q(n).\]
To apply (\ref{mixing}) we note that under the assumptions of the theorem,
the minimum stationary probability for $\cM$ satisfies
\[ \log(1/\pi^*) = \log\big(\hat{Z}_\lambda(\dsq)\big) \leq \log\big(\lambda^{\nu}\big) + \log\big(|\Gnd|\big) \leq \log n + M\log M\]
using (\ref{eq:Z-upper}).
This uses the fact, which can be proved 
using the configuration model~\cite{bollobas}, that
\begin{equation}
\label{eq:log-size-state-space}
\log |\Gnd| \leq M\log M
\end{equation}
where $M=M(\dsq) = \sum_{j\in [n]} d_j$. 
Hence using (\ref{mixing}) we conclude that the modified Metropolis switch chain
is rapidly mixing, with mixing time bounded above by 
\[ \tau(\varepsilon) \leq n^{3\alpha} \, q(n)\, \big( M\log M + \log n + \log(\varepsilon^{-1})\big). \]
This completes the proof of Theorem~\ref{thm:Metropolis-switch}.
\end{proof}

We can see from this proof why we dampen the impact of the
upper tail of the distribution of $t(G)$ by replacing $t(G)$ with $\min\{ t(G),\nu\}$
in the transition procedure.  The factor $D(\cM,\cM')$ 
involves the ratio $\hat{Z}_{\lambda}(\dsq)/|\Gnd|$, which is the
expected value of $\lambda^{\min\{ t(G),\nu\}}$ under the uniform distribution on
$\Gnd$.  Without modification, this factor would become the expected value of
$\lambda^{t(G)}$ with respect to the uniform distribution on $\Gnd$, and we
do not believe that this quantity is polynomially bounded.

\section{Irreducibility}\label{sec:irreducible}

In this section we prove that every switch can be simulated by a sequence of at most five \ds es. 

\begin{thm} \label{thm:five}
Let $(G,H)$ be a switch, where $G,H\in\Gnd$. There is a \ds\ simulation path of length at most~5 which simulates the switch $(G,H)$.
\end{thm}

We will present the constructions diagrammatically, with the following conventions: solid lines represent known edges of $G$, dotted lines represent known non-edges, and dashed lines represent non-edges involved in the current \ds.

Suppose that a \ds\ $(G,H)$ replaces the edges $a_1a_2$,\, $a_3a_4 $ with the
edges $a_1a_3$,\, $a_2a_4 $. We classify switches into three types,
depending on whether one or
both of the \emph{diagonals} $a_1a_4$, $a_2a_3$ is present or absent, as shown
in Fig.~\ref{fig:3cases}.  Note that the diagonals
have the form $a_ja_{5-j}$ for $j=1,2$.

\begin{figure}[ht!]
\begin{center}
\begin{tikzpicture}[scale=0.74,inner sep=0pt]
\node (a1) [b,label=above left:$a_1$] at (0,2) {};
\node (a2) [b,label=above right:$a_2$] at (2,2) {} ;
\node (a3) [b,label=below left:$a_3$] at (0,0)  {};
\node (a4) [b,label=below right:$a_4$] at (2,0)  {};
\draw  (a1) -- (a2) (a3) -- (a4);
\draw [dashed] (a1) -- (a3) (a2) -- (a4);
\draw [dotted] (a1)-- (a4) (a2)--(a3) ;
\node at (1,-1) {Type (a)};
\begin{scope}[shift={(6,0)}]
\node (a1) [b,label=above left:$a_1$] at (0,2) {};
\node (a2) [b,label=above right:$a_2$] at (2,2) {} ;
\node (a3) [b,label=below left:$a_3$] at (0,0)  {};
\node (a4) [b,label=below right:$a_4$] at (2,0)  {};
\draw  (a1) -- (a2) (a2) -- (a3) (a3) -- (a4) (a1) -- (a4);
\draw [dashed] (a1) -- (a3) (a2) -- (a4) ;
\node at (1,-1) {Type (b)};
\end{scope}
\begin{scope}[shift={(12,0)}]
\node (a1) [b,label=above left:$a_1$] at (0,2) {};
\node (a2) [b,label=above right:$a_2$] at (2,2) {} ;
\node (a3) [b,label=below left:$a_3$] at (0,0)  {};
\node (a4) [b,label=below right:$a_4$] at (2,0)  {};
\draw  (a1) -- (a2) (a3) -- (a4) (a1) -- (a4) ;
\draw [dashed] (a1) -- (a3) (a2) -- (a4);
\draw [dotted]  (a2) -- (a3);
\node at (1,-1) {Type (c)};
\end{scope}
\end{tikzpicture}
\end{center}
\caption{The three types of switch, depending on how many diagonals are present}
\label{fig:3cases}
\end{figure}

Let $D=\{a_1,a_2,a_3,a_4\}$ be the vertices of the switch, and $A_i=\Nb(a_i)\sm D$ %($i\in[4]$) 
be the neighbours of $a_i$ outside of $D$, for $i=1,\ldots, 4$. Note that $|A_i|=\deg(a_i)-\deg_D(a_i)$, so $|A_i|\geq d_n-1$ or $|A_i|\geq d_n-2$. Thus, since $d_n\geq3$, we have $A_i\ne\es$ for all $i\in[4]$ in any switch.

\bigskip

\begin{lem}
\label{lem:pivot}
Suppose that $(G,H)$ is a switch which removes the edges $a_1a_2$, $a_3a_4$
and replaces them with the edges $a_1a_3$, $a_2a_4$.
Further suppose that at least one of the following holds:
\begin{itemize}
\item[\emph{(i)}] The switch is Type~\emph{(b)} or Type~\emph{(c)}. That is, at least one diagonal is present.
\item[\emph{(ii)}] The switch is Type~\emph{(a)}, and some pair of distinct elements of
$D = \{ a_1,a_2,a_3,a_4\}$ have a common neighbour in $[n]\setminus D$.
\item[\emph{(iii)}] There is a triangle on vertices $a_j, u, w$ for some vertices $u,w$
which satisfy $u\not\in A_{5-j}$ and $\{u,w\}\cap D = \es$,
where $j\in [4]$.
\end{itemize}
Then there is a \ds\ simulation path of length at most~4 which simulates
the switch $(G,H)$.
\end{lem}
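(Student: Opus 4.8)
The plan is to convert the given switch $(G,H)$ into a sequence of at most four \ds es by exploiting the hypothesis to find a ``witness'' triangle that certifies that each intermediate move is a genuine \ds\ (i.e.\ that it changes the triangle set and satisfies the common-neighbour side condition in Fig.~\ref{fig:triangle-switch-code}). The natural strategy is the ``pivot'' idea suggested by the lemma's name: rather than performing the switch $\{a_1a_2,a_3a_4\}\to\{a_1a_3,a_2a_4\}$ in one illegal step, route through an auxiliary edge using a fifth vertex. Concretely, pick a vertex $p$ outside $D$ adjacent to an appropriate $a_i$ (such a $p$ exists since each $A_i\neq\es$), and decompose the switch as: (1) a \ds\ replacing $\{a_ip,\,\cdot\}$ by an edge incident to $p$ and one of the other $a_k$, chosen so that the move creates or destroys a triangle on $D$ or on $\{a_i,p,\cdot\}$; (2) possibly one or two more \ds es rearranging around $p$; and (3) a final \ds\ restoring $p$'s neighbourhood while completing the desired edge swap. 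The three cases (i)--(iii) each supply a different guarantee that one of these moves is triangle-changing: in case (i) the present diagonal $a_ja_{5-j}$ itself is the third edge of a triangle created/broken by the direct move, so in fact a short path (perhaps even length $1$ or $2$) suffices; in case (ii) the common neighbour $x\in[n]\setminus D$ plays the role of the witness vertex; in case (iii) the triangle $a_juw$ is used, with the condition $u\notin A_{5-j}$ ensuring that $a_{5-j}u$ is a non-edge so the relevant switch is actually performable.

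First I would handle case (i) directly: if a diagonal, say $a_2a_3$, is present, then the edge set $\{a_1a_2,a_3a_4\}\to\{a_1a_3,a_2a_4\}$ already changes the triangle count — for Type~(b) both diagonals are present and either switch direction alters two triangles, for Type~(c) one diagonal is present — and the common-neighbour condition is met by taking $aa'$ to be one of the new edges together with the diagonal as the common neighbour's edge. One must check the ``$F'\cap(E(G)\setminus F)=\es$'' requirement, i.e.\ that $a_1a_3$ and $a_2a_4$ are genuinely non-edges in $G$; but that is part of the definition of the switch $(G,H)$, so case (i) gives a simulation path of length exactly~$1$. Next, for cases (ii) and (iii) with a Type~(a) switch, I would perform the switch in two or three stages through a pivot vertex: use the witness (the common neighbour $x$, or the triangle vertices $u,w$) to legalise the first \ds, which introduces an intermediate edge configuration, then do one "bookkeeping" \ds\ if needed, then a final \ds\ that is legal because the newly-formed configuration now contains a suitable triangle. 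The bound of $4$ leaves room for at most one extra corrective step beyond the three-stage scheme.

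The main obstacle I anticipate is case~(iii): here the witness triangle $a_juw$ involves vertices $u,w$ that may have essentially arbitrary adjacency to $D$ and to each other, so one must carefully verify, for the particular simulation path chosen, that every intermediate pair of edges to be switched is non-adjacent, that each replacement pair consists of non-edges, and that the common-neighbour side condition holds at each step — all without creating a multi-edge or accidentally leaving the graph outside $\Gnd$. The condition $u\notin A_{5-j}$ is clearly the linchpin (it guarantees $a_{5-j}u\notin E(G)$, making the key switch performable), and part of the work is to see that this single hypothesis, together with $d_n\geq 3$ giving $A_i\neq\es$, is enough to push a witness through to legalise the final move. I would organise the proof as a short case analysis — case~(i) in one line, then cases (ii) and (iii) each with an explicit diagram of the (at most) four \ds es and a checklist verifying legality at each step — reusing the diagrammatic conventions (solid = edge, dotted = non-edge, dashed = non-edge used in the current \ds) already set up before the lemma.
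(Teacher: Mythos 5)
There is a genuine gap, and it sits exactly where you claim the lemma is easy: case (i). You assert that if a diagonal, say $a_2a_3$, is present then the direct switch $\{a_1a_2,a_3a_4\}\to\{a_1a_3,a_2a_4\}$ is already a valid \ds\ of length~1, with ``the diagonal as the common neighbour's edge''. This misreads the transition rule and is false. First, check the subgraph induced on $D$: in Type~(b) both $G[D]$ and $H[D]$ are $4$-cycles, and in Type~(c) neither contains a triangle, so the presence of diagonals creates or destroys no triangle inside $D$. Second, the chain's side condition requires $\big(N(a)\cap N(a')\big)\setminus\{a_1,a_2,a_3,a_4\}\neq\es$ for some $aa'\in F\cup F'$, i.e.\ a common neighbour \emph{outside} $D$; a vertex of $D$ cannot serve as the witness. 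Consequently a Type~(b) or Type~(c) switch is a single \ds\ only when some switch edge or inserted edge has a common neighbour outside $D$ (the paper's case~(I)); otherwise the diagonal buys you nothing directly, and this is precisely where the real work of the lemma lies. In particular, your proposal has no mechanism at all for the hardest subcases: Type~(c) switches in which $A_i=A_{5-i}$ for the pair $\{i,j\}=\{1,2\}$ with $a_ja_{5-j}$ the present diagonal, where no nearby witness triangle or asymmetry exists. The paper handles these with two explicit length-$4$ paths (its cases~(V) and~(VI)), one using an edge $uv$ inside $A=A_2=A_3$ and one using a vertex $u\in A$ together with an outside neighbour $w$ of a second vertex $v\in A$ (this is where $d_n\geq 3$ is really used); nothing in your three-stage ``pivot'' sketch produces these.

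For cases (ii) and (iii) your instincts are right but not carried out: the common neighbour of a ``diagonal pair'' $\{a_1,a_4\}$ or $\{a_2,a_3\}$ yields a length-$2$ path through the configuration with both diagonals swapped in (paper's cases~(II)/(III)), and the triangle $a_ju w$ with $u\notin A_{5-j}$ yields a length-$2$ path that first trades $a_ju,a_3a_4$ for $a_ja_3,ua_4$ (breaking the triangle) and then trades $ua_4,a_ja_{?}$ back for $a_ju$ while inserting $a_2a_4$ (paper's case~(IV)); your observation that $u\notin A_{5-j}$ guarantees $ua_{5-j}\notin E(G)$ is indeed the key legality check there. But since your length-$1$ treatment of case (i) is wrong and the Type~(c), $A_i=A_{5-i}$ subcases are untouched, the proposal as written does not prove the lemma.
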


\begin{proof}
For future reference, we will number the cases which arise when defining the
simulation paths. This will be needed later in the proof of rapid mixing.
We treat these cases in order, so we only move to~(II) if~(I) does not
hold, and so on.
\begin{itemize}
\item[(I)]
If the switch $(G,H)$ is itself a \ds\ then the simulation path is $G,H$ which
has length~1.  This occurs
if and only if at least one of $A_1\cap A_2$, $A_1\cap A_3$, $A_3\cap A_4$, $A_2\cap A_4$
	is nonempty.
\item[(II)]
Now suppose that the switch has Type~(a) (both diagonals absent)
and $(A_1\cap A_4)\cup (A_2\cap A_3)$ is non-empty.
After relabelling if necessary, assume that
$A_1\cap A_4\neq\es$.
Then there is a simulation path of length~2, shown below.
\begin{center}
\begin{tikzpicture}[scale=0.74,inner sep=0pt]
\node (v) [b,label=above:${}_{}v$] at (1,2.75) {};
\node (a1) [b,label=above left:$a_1$] at (0,2) {};
\node (a2) [b,label=below right:$a_2$] at (2,0) {} ;
\node (a3) [b,label=below left:$a_3$] at (0,0)  {};
\node (a4) [b,label=above right:$a_4$] at (2,2)  {};
\draw  (a1) -- (a2) (a3) -- (a4) (a1) -- (v) -- (a4);
\draw [dashed] (a1)-- (a4) (a2)--(a3) ;
\draw[dotted] (a1) -- (a3) (a2) -- (a4) ;
\node at (4,1.25) {\Dp} ; \node at (4,0.75) {$\longrightarrow$} ;
\begin{scope}[shift={(6,0)}]
\node (v) [b,label=above:${}_{}v$] at (1,2.75) {};
\node (a1) [b,label=above left:$a_1$] at (0,2) {};
\node (a2) [b,label=below right:$a_2$] at (2,0) {} ;
\node (a3) [b,label=below left:$a_3$] at (0,0)  {};
\node (a4) [b,label=above right:$a_4$] at (2,2)  {};
\draw  (a1) -- (a4) (a2) -- (a3) (a1) -- (v) -- (a4);
\draw [dashed] (a1) -- (a3) (a2) -- (a4) ;
\draw[dotted] (a1) -- (a2) (a3) -- (a4) ;
\node at (4,1.25) {\Dm} ; \node at (4,0.75) {$\longrightarrow$} ;
\end{scope}
\begin{scope}[shift={(12,0)}]
\node (v) [b,label=above:${}_{}v$] at (1,2.75) {};
\node (a1) [b,label=above left:$a_1$] at (0,2) {};
\node (a2) [b,label=below right:$a_2$] at (2,0) {} ;
\node (a3) [b,label=below left:$a_3$] at (0,0)  {};
\node (a4) [b,label=above right:$a_4$] at (2,2)  {};
\draw  (a1) -- (a3) (a2) -- (a4) (a1) -- (v) -- (a4);
\draw [dotted] (a1) -- (a4) (a1) -- (a2) (a3)-- (a4) (a3)--(a2) ;
\end{scope}
\end{tikzpicture}
\end{center}
\item[(III)]
Next, suppose that the switch has Type~(b) (both diagonals present) and
$(A_1\cap A_4)\cup(A_2\cap A_3)$ is non-empty.
After relabelling if necessary, $A_1\cap A_4\neq\es$.
Again, there is a simulation path of length~2, as shown below.
\begin{center}
\begin{tikzpicture}[scale=0.74,inner sep=0pt]
\node (v) [b,label=above:${}_{}v$] at (1,2.75) {};
\node (a1) [b,label=above left:$a_1$] at (0,2) {};
\node (a2) [b,label=below right:$a_2$] at (2,0) {} ;
\node (a3) [b,label=below left:$a_3$] at (0,0)  {};
\node (a4) [b,label=above right:$a_4$] at (2,2)  {};
\draw  (a1) -- (a2) (a3) -- (a4) (a1) -- (v) -- (a4) (a1)-- (a4) (a2)--(a3);
\draw [dashed] (a1) -- (a3) (a2) -- (a4) ;
\node at (4,1.25) {\Dm} ; \node at (4,0.75) {$\longrightarrow$} ;
\begin{scope}[shift={(6,0)}]
\node (v) [b,label=above:${}_{}v$] at (1,2.75) {};
\node (a1) [b,label=above left:$a_1$] at (0,2) {};
\node (a2) [b,label=below right:$a_2$] at (2,0) {} ;
\node (a3) [b,label=below left:$a_3$] at (0,0)  {};
\node (a4) [b,label=above right:$a_4$] at (2,2)  {};
\draw  (a1) -- (v) -- (a4) (a1) -- (a3) (a2) -- (a4) (a1)-- (a2) (a3)--(a4);
\draw [dashed] (a1) -- (a4) (a2) -- (a3);
\node at (4,1.25) {\Dp} ; \node at (4,0.75) {$\longrightarrow$} ;
\end{scope}
\begin{scope}[shift={(12,0)}]
\node (v) [b,label=above:${}_{}v$] at (1,2.75) {};
\node (a1) [b,label=above left:$a_1$] at (0,2) {};
\node (a2) [b,label=below right:$a_2$] at (2,0) {} ;
\node (a3) [b,label=below left:$a_3$] at (0,0)  {};
\node (a4) [b,label=above right:$a_4$] at (2,2)  {};
\draw  (a1) -- (a3) (a1) -- (a4) (a2) -- (a4) (a3)--(a2) (a1) -- (v) -- (a4);
\draw [dotted] (a1) -- (a2) (a3)-- (a4) ;
\end{scope}
\end{tikzpicture}
\end{center}
\item[(IV)]

Note that if none of (I), (II) or (III) hold then $A_i\cap A_j = \es$ for all distinct
$i,j\in [4]$.
Next we suppose that, after relabelling if necessary,
\begin{itemize}
\item at least one of the diagonal edges $a_ja_{5-j}$ is present, and the
symmetric difference $A_i \oplus A_{5-i}$ is nonempty,
where $\{i,j\} = \{1,2\}$, or
\item there is a triangle on vertices $a_j, u, w$ for two distinct
vertices $u,w \in [n]\setminus D$.
(Here we do not care how many diagonal edges are present.)
\end{itemize}
In the first subcase, choose $(i,u)$ to be lexicographically-least.  Then after
relabelling if necessary, we can assume that
$u$ is the lexicographically-least element of $A_1\setminus A_4$. In the
second subcase, choose $(j,u,w)$ to be the lexicographically-least triple satisfying
the requirements.  Then after relabelling if necessary, we can assume that
$j=1$.  In both cases, we have a simulation path of length~2, consisting of
first replacing edges $a_1 u, a_3a_4$ by $a_1a_3, ua_4$ and then replacing
edges $ua_4, a_1a_3$ by edges $a_1 u, a_2a_4$. Under the given conditions,
each of these operations is a \ds, as required.  We illustrate the simulation path
below for the first subcase when exactly one diagonal $a_2a_3$ is present (noting that
it this still gives a sequence of two \ds es if $a_1a_4$ is also present), and in the second subcase when neither diagonal
is present (noting that this still gives a sequence of two \ds es if one or both diagonals are present).

\begin{center}
\begin{tikzpicture}[xscale=0.64,yscale=0.6,inner sep=0pt,font=\small]
\node (u) [b,label=right:\,$u$] at (1,5.25) {};
\node (a1) [b,label=left:$a_1$\,] at (0,4) {};
\node (a2) [b,label=left: $\,a_2$] at (0,2) {};
\node (a3) [b,label=right:$\,a_3$] at (2,2) {} ;
\node (a4) [b,label=right:$\,a_4$] at (2,4)  {};
\draw  (u) -- (a1)-- (a2) -- (a3) -- (a4) ;
\draw [dashed] (u) -- (a4) (a1) -- (a3) ;
\draw[dotted] (a1)--(a4)--(a2) ;
\node at (5.5,3.5) {\Dp} ; \node at (5.5,3) {$\longrightarrow$} ;
\begin{scope}[shift={(9,0)}]
\node (u) [b,label=right:\,$u$] at (1,5.25) {};
\node (a1) [b,label=left:$a_1$\,] at (0,4) {};
\node (a2) [b,label=left: $\,a_2$] at (0,2) {};
\node (a3) [b,label=right:$\,a_3$] at (2,2) {} ;
\node (a4) [b,label=right:$\,a_4$] at (2,4)  {};
\draw  (u) -- (a4) (a1) -- (a2) -- (a3) -- (a1);
\draw [dashed] (a2) -- (a4) (a1) -- (u) ;
\draw[dotted] (a1)--(a4)--(a3) ;
\node at (5.5,3.5) {\Dm} ; \node at (5.5,3) {$\longrightarrow$} ;
\end{scope}
\begin{scope}[shift={(18,0)}]
\node (u) [b,label=right:\,$u$] at (1,5.25) {};
\node (a1) [b,label=left:$a_1$\,] at (0,4) {};
\node (a2) [b,label=left: $\,a_2$] at (0,2) {};
\node (a3) [b,label=right:$\,a_3$] at (2,2) {} ;
\node (a4) [b,label=right:$\,a_4$] at (2,4)  {};
\draw  (u) -- (a1) -- (a3) -- (a2) -- (a4);
\draw[dotted] (u)--(a4)--(a3) (a2)--(a1)--(a4);
\end{scope}
\end{tikzpicture}
\end{center}

\begin{center}
\begin{tikzpicture}[xscale=0.64,yscale=0.6,inner sep=0pt,font=\small]
\node (w) [b,label=left:$w$\,] at (-1,5.25) {};
\node (u) [b,label=right:\,$u$] at (1,5.25) {};
\node (a1) [b,label=left:$a_1$\,] at (0,4) {};
\node (a2) [b,label=left: $\,a_2$] at (0,2) {};
\node (a3) [b,label=right:$\,a_3$] at (2,2) {} ;
\node (a4) [b,label=right:$\,a_4$] at (2,4)  {};
\draw  (a1) -- (w) -- (u) -- (a1)-- (a2)  (a3) -- (a4) ;
\draw [dashed] (u) -- (a4) (a1) -- (a3) ;
\draw[dotted] (a1)--(a4)--(a2) -- (a3) ;
\node at (5.5,3.5) {\Dm} ; \node at (5.5,3) {$\longrightarrow$} ;
\begin{scope}[shift={(9,0)}]
\node (w) [b,label=left:$w$\,] at (-1,5.25) {};
\node (u) [b,label=right:\,$u$] at (1,5.25) {};
\node (a1) [b,label=left:$a_1$\,] at (0,4) {};
\node (a2) [b,label=left: $\,a_2$] at (0,2) {};
\node (a3) [b,label=right:$\,a_3$] at (2,2) {} ;
\node (a4) [b,label=right:$\,a_4$] at (2,4)  {};
\draw  (u) -- (a4) (a1) -- (a2)  (a3) -- (a1) -- (w) -- (u);
\draw [dashed] (a2) -- (a4) (a1) -- (u) ;
\draw[dotted] (a1)--(a4)--(a3) -- (a2) ;
\node at (5.5,3.5) {\Dp} ; \node at (5.5,3) {$\longrightarrow$} ;
\end{scope}
\begin{scope}[shift={(18,0)}]
\node (w) [b,label=left:$w$\,] at (-1,5.25) {};
\node (u) [b,label=right:\,$u$] at (1,5.25) {};
\node (a1) [b,label=left:$a_1$\,] at (0,4) {};
\node (a2) [b,label=left: $\,a_2$] at (0,2) {};
\node (a3) [b,label=right:$\,a_3$] at (2,2) {} ;
\node (a4) [b,label=right:$\,a_4$] at (2,4)  {};
\draw (a1) -- (w) -- (u) -- (a1) -- (a3)  (a2) -- (a4);
\draw[dotted] (u)--(a4)--(a3) (a3) -- (a2)--(a1)--(a4);
\end{scope}
\end{tikzpicture}
\end{center}

\item[(V)]
Next, suppose that the switch has Type~(c), so exactly one diagonal $a_ja_{5-j}$
is present, and that $A_i = A_{5-i}$, and that $G[A_i]$ has at least
one edge, where $\{i,j\} = \{ 1,2\}$.
By relabelling if necessary, we may assume that diagonal $a_1a_4$ is present,
and for ease of notation let $A= A_2 = A_3$.
Let $(u,v)$ be the lexicographically-least pair
of vertices in $A$ such that $uv$ is an edge of $G$.
Then $a_2,u,v$ and $a_3,u,v$ both determine triangles,  while $u\not\in A_1$ and
$v\not\in A_4$ as we are not in (I).   Then there is a simulation path
of length~4 as shown below.
%Note that $ua_1$, $ua_4$, $vua_1$,  and $va_4$ are non edges since $A_2\cap A_1=\es$ and $A_3\cap A_4=\es$. The illustrated sequence of four \ds es then simulates the required switch.

\begin{center}
\begin{tikzpicture}[scale=0.60,inner sep=0pt]
\begin{scope}[shift={(0,0)}]
\node (v) [b,label=above:${}_{}v$] at (2,4) {};
\node (a2) [b,label=left:$a_2$\,] at (0,2) {};
\node (a1) [b,label=below left:$a_1$] at (0,0) {} ;
\node (a4) [b,label=below right:$a_4$] at (2,0)  {};
\node (a3) [b,label=right:\,$a_3$] at (2,2)  {};
\node (u) [b,label=above:${}_{}u$] at (0,4) {};
\draw  (a2) -- (a1) (a1)--(a4)(a2) -- (v) -- (a3) (a3)-- (a4) (v) -- (u) -- (a2) (u)--(a3) (a4) -- (a3) ;
\draw [dashed] (u)edge[bend right=60](a1) (a4)edge[bend right=60](v);
\draw[dotted] (a1)--(a3)--(a2)--(a4) ;
\node at (4.5,2.25) {\small \Dm} ; \node at (4.5,1.75) {$\longrightarrow$} ;
\end{scope}
\begin{scope}[shift={(7,0)}]
\node (v) [b,label=above:${}_{}v$] at (2,4) {};
\node (a2) [b,label=left:$a_2$\,] at (0,2) {};
\node (a1) [b,label=below left:$a_1$] at (0,0) {} ;
\node (a4) [b,label=below right:$a_4$] at (2,0)  {};
\node (a3) [b,label=right:\,$a_3$] at (2,2)  {};
\node (u) [b,label=above:${}_{}u$] at (0,4) {};
\draw  (a2) -- (a1)(a2) -- (v) -- (a3) (a3)-- (a4) (u) -- (a2) (u)--(a3);
\draw [dashed] (a4) -- (a3) (a2) -- (a3) (a1)--(a4);
\draw  (u)edge[bend right=60](a1) (a4)edge[bend right=60](v);
\draw[dotted] (a1)--(a3)(a2)--(a4) (u)--(v);
\node at (4.5,2.25) {\Dp} ; \node at (4.5,1.75) {$\longrightarrow$} ;
\end{scope}
\begin{scope}[shift={(14,0)}]
\node (v) [b,label=above:${}_{}v$] at (2,4) {};
\node (a2) [b,label=left:$a_2$\,] at (0,2) {};
\node (a1) [b,label=below left:$a_1$] at (0,0) {} ;
\node (a4) [b,label=below right:$a_4$] at (2,0)  {};
\node (a3) [b,label=right:\,$a_3$] at (2,2)  {};
\node (u) [b,label=above:${}_{}u$] at (0,4) {};
\draw (a2) -- (v) -- (a3) (a1) -- (a4) (u) -- (a2) (a2) -- (a3) (u)--(a3);
\draw [dashed] (a2) -- (a4) (a1) -- (a3) (u) --(v) (a1)--(a4);
\draw  (u)edge[bend right=60](a1) (a4)edge[bend right=60](v);
\draw[dotted] (a1)--(a2)(a3)--(a4) (u)--(v);
\end{scope}
\begin{scope}[shift={(4,-6)}]
\node at (-2,2.25) {\Dm} ; \node at (-2,1.75) {$\longrightarrow$} ;
\node (v) [b,label=above:${}_{}v$] at (2,4) {};
\node (a2) [b,label=left:$a_2$\,] at (0,2) {};
\node (a1) [b,label=below left:$a_1$] at (0,0) {} ;
\node (a4) [b,label=below right:$a_4$] at (2,0)  {};
\node (a3) [b,label=right:\,$a_3$] at (2,2)  {};
\node (u) [b,label=above:${}_{}u$] at (0,4) {};
\draw (a2) -- (v) -- (a3) (a1) -- (a3) (u) -- (a2) (a2) -- (a4) (u)--(a3) ;
\draw [dashed] (a1) -- (a3) (u) --(v) (a1)--(a4);
\draw  (u)edge[bend right=60](a1) (a4)edge[bend right=60](v);
\draw[dotted] (a1)--(a2)--(a3)--(a4);
\end{scope}
\begin{scope}[shift={(11,-6)}]
\node at (-2.5,2.25) {\Dp} ; \node at (-2.5,1.75) {$\longrightarrow$} ;
\node (v) [b,label=above:${}_{}v$] at (2,4) {};
\node (a2) [b,label=left:$a_2$\,] at (0,2) {};
\node (a1) [b,label=below left:$a_1$] at (0,0) {} ;
\node (a4) [b,label=below right:$a_4$] at (2,0)  {};
\node (a3) [b,label=right:\,$a_3$] at (2,2)  {};
\node (u) [b,label=above:${}_{}u$] at (0,4) {};
\draw (a2) -- (v) -- (a3) (a1) -- (a3) (v) -- (u) -- (a2) (a1)--(a4) (a2) -- (a4) (u)--(a3);
\draw [dotted] (a2) -- (a3) (a2) -- (a1)(a1) -- (a3) (a4) -- (a3);
\draw [dotted] (u)edge[bend right=60](a1) (a4)edge[bend right=60](v);
\end{scope}
\end{tikzpicture}
\end{center}

\item[(VI)]
Finally, suppose that the switch has Type~(c), so exactly one diagonal $a_ja_{5-j}$
is present, and that $A_i = A_{5-i}$, and that $G[A_i]$ has no edges,
where $\{i,j\} = \{ 1,2\}$,
Again, by relabelling if necessary, we assume that diagonal $a_1a_4$ is present,
and write $A=A_2 = A_3$.
Note that $|A|\geq 2$ as $d_D(a_2) = d_D(a_3)=1$.
Let $(u,v)$ be the lexicographically-least pair of distinct
elements in $A$, and let $w$ be the smallest-labelled vertex in $\Nb(v) \setminus D$,
which exists as the minimum degree is at least~3.
Then $w\not\in A = A_3$  as $v\in A$ and $vw\in E$.
We can construct the following simulation path of length~4.
\begin{center}
\begin{tikzpicture}[scale=0.60,inner sep=0pt]
\begin{scope}[shift={(0,0)}]
\node (w) [b,label=above:${}_{}w$] at (3.5,4) {};
\node (v) [b,label=above:${}_{}v$] at (2.0,4) {};
\node (a2) [b,label=left:$a_2$] at (0,2) {};
\node (a1) [b,label=below left:$a_1$] at (0,0) {} ;
\node (a4) [b,label=below right:$a_4$] at (2,0)  {};
\node (a3) [b,label=below right:$a_3$] at (2,2)  {};
\node (u) [b,label=above:${}_{}u$] at (0.0,4) {};
\draw  (u) -- (a2) -- (a1) (a1)--(a4) (a2) -- (v) -- (a3) (a3)-- (a4)  (w) -- (v)  (u)--(a3);
\draw [dashed]  (a4) -- (a3) (u) -- (v) (w) -- (a3);
\draw[dotted] (a1)--(a3)--(a2)--(a4);
\node at (5,2.25) {\small \Dp} ; \node at (5,1.75) {$\longrightarrow$} ;
\end{scope}
\begin{scope}[shift={(7.5,0)}]
\node (w) [b,label=above:${}_{}w$] at (3.5,4) {};
\node (v) [b,label=above:${}_{}v$] at (2.0,4) {};
\node (a2) [b,label=left:$a_2$] at (0,2) {};
\node (a1) [b,label=below left:$a_1$] at (0,0) {} ;
\node (a4) [b,label=below right:$a_4$] at (2,0)  {};
\node (a3) [b,label=below right:$a_3$] at (2,2)  {};
\node (u) [b,label=above:${}_{}u$] at (0.0,4) {};
\draw  (a2) -- (a1) (a1)--(a4)(a2) -- (v) --(a3)-- (a4)   (u) -- (a2) (u) -- (v) (w) -- (a3);
\draw [dashed] (a2) -- (a4) (a4) -- (a3) (u) -- (a3);
\draw[dotted] (v) -- (w) (a2) -- (a3) (a1) -- (a3) ;
\node at (5,2.25) {\small \Dp} ; \node at (5,1.75) {$\longrightarrow$} ;
\end{scope}
\begin{scope}[shift={(15,0)}]
\node (w) [b,label=above:${}_{}w$] at (3.5,4) {};
\node (v) [b,label=above:${}_{}v$] at (2.0,4) {};
\node (a2) [b,label=left:$a_2$] at (0,2) {};
\node (a1) [b,label=below left:$a_1$] at (0,0) {} ;
\node (a4) [b,label=below right:$a_4$] at (2,0)  {};
\node (a3) [b,label=below right:$a_3$] at (2,2)  {};
\node (u) [b,label=above:${}_{}u$] at (0,4) {};
\draw  (a2) -- (a1) (v) -- (a2) (w) -- (a3) (a1)--(a4) (a2) -- (a4)(v) -- (a3) (v) -- (u)  (u)--(a3);
\draw [dotted] (a3)-- (a4) (a2) -- (a3) (v) -- (w);
\draw[dashed] (a2) -- (u) (a1) -- (a3) ;
\end{scope}
\begin{scope}[shift={(4,-6)}]
\node at (-2,2.25) {\small \Dm} ; \node at (-2,1.75) {$\longrightarrow$} ;
\node (w) [b,label=above:${}_{}w$] at (3.5,4) {};
\node (v) [b,label=above:${}_{}v$] at (2.0,4) {};
\node (a2) [b,label=left:$a_2$] at (0,2) {};
\node (a1) [b,label=below left:$a_1$] at (0,0) {} ;
\node (a4) [b,label=below right:$a_4$] at (2,0)  {};
\node (a3) [b,label=below right:$a_3$] at (2,2)  {};
\node (u) [b,label=above:${}_{}u$] at (0.0,4) {};
\draw  (u) -- (a2) (w) -- (a3) (a1)--(a4) (a2) -- (a4)(a2) -- (v) (a1) -- (a3) (v) -- (u)  (v)--(a3);
\draw [dashed]  (u) -- (w) (u) -- (a3) ;
\draw[dotted] (a2) -- (a1) (a2) -- (a3) (a3)-- (a4) ;
\end{scope}
\begin{scope}[shift={(11.5,-6)}]
\node at (-2,2.25) {\small \Dm} ; \node at (-2,1.75) {$\longrightarrow$} ;
\node (w) [b,label=above:${}_{}w$] at (3.5,4) {};
\node (v) [b,label=above:${}_{}v$] at (2,4) {};
\node (a2) [b,label=left:$a_2$] at (0,2) {};
\node (a1) [b,label=below left:$a_1$] at (0,0) {} ;
\node (a4) [b,label=below right:$a_4$] at (2,0)  {};
\node (a3) [b,label=below right:$a_3$] at (2,2)  {};
\node (u) [b,label=above:${}_{}u$] at (0,4) {};
\draw  (u) -- (a2) (a1)--(a4) (a2) -- (a4)(a2) -- (v) (a1) -- (a3) (v) -- (w) (v) -- (a3)  (u)--(a3);
\draw [dotted] (a4) -- (a3) (a2) -- (a3) (a3)-- (a4) (a2) -- (a1) (u) -- (v) (w) -- (a3);
\end{scope}
\end{tikzpicture}
\end{center}
\end{itemize}
To complete the proof, observe that (I), (III) and (IV) together cover all Type~(b) switches,
while Type~(c) switches are covered by combining (I), (IV), (V) and (VI).
	This proves the lemma under assumption (i).
The proof for assumption (ii) is covered by (I) and (II) together,
while (IV) completes the proof for assumption (iii).
\end{proof}

For the remainder of the proof we assume that the assumptions of
Lemma~1 do not hold.  Hence the switch has Type~(a),
that is, no diagonals are present. Furthermore, no two vertices in $D$
have a common neighbour outside $D$, and there is no triangle of the
form $a_j u w$ with %$u\not\in A_{5-j}$ and
$\{u,w\}\cap D = \es$, where $j\in [4]$.

In this situation it is possible that the switch edges $a_1a_2$, $a_3a_4$
lie in different components of $G$, so that $H$ has one fewer components
that $G$. Only a \dsm\ can reduce the number of components, and so our
strategy is to ``plant'' a triangle of the form $a_j,u, w$ with
$u,w\not\in D$ distinct.  By assumption, $ua_{5-j}$ is not an edge as
$A_j\cap A_{5-j}=\es$.  Hence, after planting the
triangle, we can simulate the switch using the simulation path
described in part (IV) of Lemma~\ref{lem:pivot}, then ``unplant'' the
planted triangle.  There is one exceptional situation where we will not
plant a triangle, but instead will simulate the switch $(G,H)$ with
a simulation path of length three.  (This is case (IXa) of Lemma~\ref{lem:no-pivot} below.)
%In almost all cases we can plant a triangle with one \dsp,
%but in case (IXc) of Lemma~\ref{lem:no-pivot} we will require three \ds es to plant a suitable triangle.

We will make use of the following lemma to plant a triangle on some
element of $D$. This lemma will be applied to graphs in $\Gnd$, or
to graphs in $\Gnd$ with one edge removed.

\bigskip

\begin{lem}\label{lem:plant}
Let $G$ be a graph on the vertex set $[n]$.
Suppose $v\in [n]$ has a set of three distinct neighbours $R=\{r_1,r_2,r_3\}$, with $\deg(r_i)\geq 2$ for $i=1,2,3$.
Assume that there is no triangle in $G$ which contains $v$ and an element of $R$.
\begin{enumerate}
\item[\emph{(i)}]
Then there is a \dsp\ $(G,\widetilde{G})$ such that $\widetilde{G}$ contains a triangle $T$ with $v\in T$ and $R\cap T\neq\es$.
\item[\emph{(ii)}]
Furthermore, we can insist that $r_1\in T$ unless
for $j=2,3$ there is a 5-cycle
which contains the path $r_1 v r_j$, but no 4-cycle in $G$
contains this path.
\end{enumerate}
\end{lem}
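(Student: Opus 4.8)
The plan is to produce the planted triangle via a single \dsp\ that uses $v$ as the pivot vertex in the sense of Fig.~\ref{fig:triswitch}, taking the path through $v$ to be $r_i\,v\,r_j$ for a suitable choice of $\{i,j\}\subseteq\{1,2,3\}$. Concretely, to create a triangle on $\{v,r_i,r_j\}$ we need an edge $r_i x$ and an edge $r_j y$ with $x\neq y$, $\{x,y\}\cap\{v,r_i,r_j\}=\es$, and $x\neq r_j$, $y\neq r_i$ — then the \dsp\ replacing $r_i x,\,r_j y$ by $r_i r_j,\, xy$ creates the triangle $v r_i r_j$ (since $r_iv,\,r_jv\in E$) and does not destroy any triangle through $v$ and $R$, because by hypothesis there are none. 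The condition ``$(N(a)\cap N(a'))\setminus\{\ldots\}\neq\es$'' in Fig.~\ref{fig:triangle-switch-code} is met by taking $aa'=r_ir_j$, whose common neighbour $v$ lies outside the four switch-vertices $\{r_i,x,r_j,y\}$. So the combinatorial task is purely: find $i\neq j$ and neighbours $x\in N(r_i)$, $y\in N(r_j)$ meeting those disjointness constraints. This establishes~(i), once we check the constraints can always be met using only $\deg(r_i)\geq 2$ and the no-triangle-through-$v$ hypothesis.

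For~(i), fix any two indices, say $r_1,r_2$. Since $\deg(r_1)\geq 2$, pick $x\in N(r_1)\setminus\{v\}$; since no triangle contains $v$ and $r_1$, $x\notin N(v)$, so in particular $x\neq r_2,r_3$ and $x\neq v$. Similarly pick $y\in N(r_2)\setminus\{v\}$ with $y\notin N(v)$, so $y\neq r_1,r_3,v$. The only remaining obstruction is $x=y$. If $x\neq y$ we are done with $T=\{v,r_1,r_2\}$. If $x=y$ is forced — i.e.\ $N(r_1)\setminus\{v\}=N(r_2)\setminus\{v\}=\{x\}$, so $r_1,r_2$ both have degree exactly~2 with the common other-neighbour $x$ — then instead use the pair $\{r_1,r_3\}$: pick $x'\in N(r_1)\setminus\{v\}$ (forced $x'=x$) and $y'\in N(r_3)\setminus\{v\}$ with $y'\notin N(v)$; if $y'\neq x$ we get $T=\{v,r_1,r_3\}$. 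The last bad case is that $r_3$ also has its unique non-$v$ neighbour equal to $x$; then try $\{r_2,r_3\}$ with the edges $r_2 x$ and $r_3 v$... but $r_3v$ is not allowed. Here $r_1,r_2,r_3$ all have degree~2 with neighbourhood $\{v,x\}$, so $x$ has $v$ and $r_1,r_2,r_3$ among its neighbours; then use the path $r_1\,v\,r_2$ together with the edges $r_1 x$ and $x r_3$ — wait, $r_3\notin\{v,r_1,r_2\}$ but the switch would be on $r_1 x, x r_3$ which share vertex $x$, not allowed. The genuinely correct escape is the \dsp\ replacing $r_1 x$ and $r_2 r_?$... I would instead observe that in this degenerate case $x$ has degree $\geq 4$, pick $z\in N(x)\setminus\{v,r_1,r_2,r_3\}$ (exists if $\deg x\geq 5$, otherwise handle $\deg x=4$ separately where $\{x,v,r_1,r_2,r_3\}$ is essentially a book and one checks directly a length-1 \dsp\ creating $v r_1 r_2$ via edges $r_3 x$ and... ), and use the \dsp\ on $r_1 x,\, r_2 x$? — no. I expect this exact degenerate configuration to be the main obstacle, and the honest resolution is: if $N(r_1)=N(r_2)=N(r_3)=\{v,x\}$, perform the \dsp\ that replaces $r_1 v$ and $r_2 x$ by $r_1 r_2$ and $v x$ — this creates triangle $v r_1 r_2$? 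No: after the switch $v x\in E$ and $r_1 r_2\in E$ but $r_1 v$ is gone. The right move is to swap $r_2 x,\, r_1 v$... I will need to think carefully here; likely the clean statement is that one never actually reaches full degeneracy because $\deg(r_i)\geq2$ only fails to give two \emph{distinct} witnesses $x\neq y$ in a configuration that itself contains a triangle through $v$, contradicting the hypothesis — and that contradiction is the key lemma-level observation to nail down.

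For~(ii), the goal is to force $r_1\in T$, i.e.\ to realise the planted triangle as $\{v,r_1,r_j\}$ for some $j\in\{2,3\}$ rather than $\{v,r_2,r_3\}$. By the~(i) analysis, the pair $\{r_1,r_j\}$ works as soon as we can find $x\in N(r_1)\setminus(N(v)\cup\{v\})$ and $y\in N(r_j)\setminus(N(v)\cup\{v\})$ with $x\neq y$ and $x\neq r_j$, $y\neq r_1$. Note $x\neq r_j$ automatically ($x\notin N(v)$ but $r_j\in N(v)$), and $y\neq r_1$ similarly. So the pair $\{r_1,r_j\}$ fails \emph{only} if every such $x$ equals every such $y$, which (since both sets are nonempty) means $N(r_1)\setminus(N(v)\cup\{v\})$ and $N(r_j)\setminus(N(v)\cup\{v\})$ are equal singletons, or — more precisely — their union has size~$1$; write this common vertex as $p_j$. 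But $v\,r_1\,\text{-}\,p_j\,\text{-}\,r_j\,v$ is then a 4-cycle (if $p_j$ is adjacent to both — yes it is) through the path $r_1 v r_j$: wait, we want the negation, ``no 4-cycle contains $r_1 v r_j$''. So the failure of pair $\{r_1,r_j\}$ yields a 4-cycle $r_1\,p_j\,r_j\,v$ containing the path $r_1 v r_j$. Contrapositively: if some 4-cycle contains $r_1 v r_j$... hmm, the stated condition is about 5-cycles \emph{and} absence of 4-cycles. I would reconcile this by a more careful case split: pair $\{r_1,r_j\}$ fails iff the only available $x,y$ coincide; one subcase gives a 4-cycle through $r_1vr_j$ and we are then \emph{allowed} to fail that pair, but if pair $\{r_1,r_2\}$ fails via a 4-cycle \emph{and} pair $\{r_1,r_3\}$ fails, we must extract the claimed 5-cycle structure for $j=2$ and $j=3$ — or else find a length-1 \dsp\ through a longer path. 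The plan is to enumerate the ways a pair can fail, show each failure either (a) can be routed around by choosing $x$ from $N(r_1)$ one more step out (using $\deg\geq 2$ again, producing a 5-cycle witness $r_1\,x\,?\,r_j\,v$), or (b) forces exactly the excluded-case hypothesis of~(ii). The bookkeeping of (a) versus (b) — matching it precisely to ``5-cycle through $r_1vr_j$ but no 4-cycle through $r_1vr_j$'' for \emph{both} $j=2,3$ simultaneously — is the delicate part, and I expect it, rather than~(i), to consume most of the proof.
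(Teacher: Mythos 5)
There is a genuine gap, and it sits exactly where you sensed trouble. Your plan is to always plant the triangle on $\{v,r_i,r_j\}$ by a switch that deletes $r_ix$, $r_jy$ and inserts $r_ir_j$, $xy$; but you never impose the requirement that $xy\notin E(G)$, which is necessary for this to be a legal transition (the procedure in Fig.~\ref{fig:triangle-switch-code} demands $F'\cap(E(G)\setminus F)=\es$). This omitted condition is the whole source of difficulty: if every admissible choice of $x\in N(r_i)\setminus\{v\}$ and $y\in N(r_j)\setminus\{v\}$ has $xy\in E$, your switch is unavailable, and this is precisely the situation where short cycles through the path $r_ivr_j$ exist --- it is also exactly what the exceptional ``5-cycle but no 4-cycle'' clause in part~(ii) is about, which is why your failure analysis for~(ii) (``failure of the pair yields a 4-cycle'') does not match the statement. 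Moreover, the degenerate configuration you get stuck on, $N(r_1)=N(r_2)=N(r_3)=\{v,x\}$ with $vx\notin E$, is \emph{not} ruled out by the no-triangle hypothesis (it satisfies every assumption of the lemma), so the contradiction you hope for at the end of your argument for~(i) does not exist; that case genuinely needs a switch of a different shape.

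The paper resolves all of this by classifying according to the length $\ell$ of the shortest cycle containing a path $r_ivr_j$ (so $\ell\geq 4$ by hypothesis), and by allowing the planted triangle to use a vertex \emph{outside} $R$. If $\ell=4$, say with 4-cycle $vr_1wr_2$, one deletes $wr_2$ and $vr_3$ and inserts $vw$ and $r_2r_3$ (both checked to be non-edges), creating the triangle $vr_1w$; your degenerate case is exactly this, with $w=x$. If $\ell=5$, with 5-cycle $vr_1w_1w_2r_2$, one deletes $vr_3$ and $w_1w_2$ and inserts $vw_1$ and $r_3w_2$, creating $vr_1w_1$; the validity check ``$w_2r_3\notin E$'' uses $\ell\neq 4$ and is precisely what can fail only in the exceptional situation of~(ii). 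Only when $\ell\geq 6$ (or no such cycle exists) does the paper use your switch, inserting $r_1r_2$ and $w_1w_2$, where $w_1w_2\notin E$ because $\ell\neq 5$. With this case analysis, part~(ii) follows by rerunning the argument with $\ell$ measured only over paths $r_1vr_j$: the cases $\ell=4$ and $\ell\geq 6$ already put $r_1$ in the planted triangle, and $\ell=5$ does too unless the configuration is exactly the excluded one. To repair your proof you would need to add the $xy\notin E$ constraint, and then supply alternative switches (of the paper's $\ell=4$ and $\ell=5$ type, planting $v r_1 w$ with $w\notin R$) for the cases where it cannot be met; as written, both~(i) and~(ii) are incomplete.
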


\begin{proof}
Let $\ell$ be the length of the shortest cycle in $G$ which contains a path
of the form $r_i v r_j$, where $r_i, r_j$ are distinct elements of $R$.
If no such cycle exists, set $\ell=\infty$. By assumption, $\ell\geq 4$. First suppose that $\ell=4$. After relabelling the vertices in $R$ if necessary, assume that the cycle contains the path $r_1vr_2$ together with some vertex $w$. (For definiteness, let $w$ be the least-labelled common neighbour of $r_1$ and $r_2$.) Note that $vw$ and $r_2r_3$ are not edges of $G$, by assumption.
Then we can plant a triangle on $v, r_1, w$ as follows:
\begin{center}
\begin{tikzpicture}[xscale=0.75,yscale=0.75,inner sep=0pt,font=\scriptsize]
\begin{scope}[shift={(0,0)}]
\node (v) [b,label=below:$\strut v$] at (0,0.4) {};
\node (u1) [b,label=left:$r_1\,$] at (-2,2) {};
\node (u2) [b,label=above right:$\,r_2$] at (0,2) {} ;
\node (u3) [b,label=right:$\,r_3$] at (2,2)  {};
\node (w) [b,label=above left:$w$] at (-1.8,3.6) {};
\draw (v)--(u1) (v)--(u2) (v)--(u3) (u1)--(w) (u2)--(w) ;
\draw[dashed] (v)--(w) (u3)--(u2) ;
\draw[dotted] (u1)--(u2);
\node at (4.25,2.5) {\Dp} ; \node at (4.25,2) {$\longrightarrow$} ;
\end{scope}
\begin{scope}[shift={(8,0)}]
\node (v) [b,label=below:$\strut v$] at (0,0.4) {};
\node (u1) [b,label=left:$r_1\,$] at (-2,2) {};
\node (u2) [b,label=above right:$\,r_2$] at (0,2) {};
\node (u3) [b,label=right:$\,r_3$] at (2,2)  {};
\node (w) [b,label=above left:$w$] at (-1.8,3.6) {};
\draw (v)--(u1) (v)--(u2) (u1)--(w) (v)--(w) (u3)--(u2) ;
\draw[dotted] (u1)--(u2) (w)--(u2) (v)--(u3) ;
\end{scope}
\end{tikzpicture}
\end{center}
Next suppose that $\ell=5$. After relabelling elements of $R$ if necessary,
we can assume that the 5-cycle is $v r_1 w_1 w_2 r_2 v$, where $w_1,w_2\not\in \{v\}\cup R$ are distinct. For definiteness,
choose $(w_1,w_2)$ to be lexicographically least among all possibilities.
Note that $vw_1$ is a non-edge in $G$ by assumption, and $w_2r_3$ is a non-edge
in $G$ as $\ell\neq 4$. We can ``plant'' a triangle as follows:
% ell = 5
\begin{center}
\begin{tikzpicture}[xscale=0.75,yscale=0.75,inner sep=0pt,font=\scriptsize]
\node (v) [b,label=below:$\strut v$] at (0,0.4) {};
\node (u1) [b,label=left:$r_1\,$] at (-2,2) {};
\node (u2) [b,label=below right:$\,r_2$] at (0,2) {} ;
\node (u3) [b,label=right:$\,r_3$] at (2,2)  {};
\node (w1) [b,label=above:$\,w_1$] at (-2,3.6) {};
\node (w2) [b,label=above:$\,w_2$] at (0,3.6) {};
\node (w3) [b,label=above:$\,w_3$] at (2,3.6) {};
\draw (v)--(u1) (v)--(u2) (v)--(u3) (u1)--(w1) (u2)--(w2) (u3)--(w3) (w1)--(w2);
\draw[dashed] (v)--(w1) (u3)--(w2);
\node at (4.25,2.5) {\Dp} ; \node at (4.25,2) {$\longrightarrow$} ;
\begin{scope}[shift={(8,0)}]
\node (v) [b,label=below:$\strut v$] at (0,0.4) {};
\node (u1) [b,label=left:$r_1\,$] at (-2,2) {};
\node (u2) [b,label=below right:$\,r_2$] at (0,2) {} ;
\node (u3) [b,label=right:$\,r_3$] at (2,2)  {};
\node (w1) [b,label=above:$\,w_1$] at (-2,3.6) {};
\node (w2) [b,label=above:$\,w_2$] at (0,3.6) {};
\node (w3) [b,label=above:$\,w_3$] at (2,3.6) {};
\draw (v)--(u1) (v)--(u2) (u3)--(w2)--(u2) (u3)--(w3) (u1)--(w1)--(v) ;
\draw[dotted] (v)--(u3) (w1)--(w2);
\end{scope}
\end{tikzpicture}
\end{center}
Now suppose that $\ell\geq 6$. After relabelling vertices in $R$
if necessary, suppose that we have a path $w_1r_1vr_2w_2$ in $G$,
	where $w_1,w_2\not\in \{v\}\cup R$ are distinct.
	(For definiteness, choose $(w_1,w_2)$ to be lexicographically
	least among all options.)
Note that $r_1r_2$ is a non-edge in $G$ by assumption, and $w_1w_2$
is a non-edge in $G$ as $\ell\neq 5$.  Then we can plant a triangle
as follows:
% \ell \geq 6
\begin{center}
\begin{tikzpicture}[xscale=0.75,yscale=0.75,inner sep=0pt,font=\scriptsize]
\begin{scope}[shift={(0,0)}]
\node (v) [b,label=below:$\strut v$] at (0,0.4) {};
\node (u1) [b,label=left:$r_1\,$] at (-2,2) {};
\node (u2) [b,label=below right:$\,r_2$] at (0,2) {} ;
\node (u3) [b,label=right:$\,r_3$] at (2,2)  {};
\node (w1) [b,label=above:$\,w_1$] at (-2,3.6) {};
\node (w2) [b,label=above:$\,w_2$] at (0,3.6) {};
\node (w3) [b,label=above:$\,w_3$] at (2,3.6) {};
\draw (v)--(u1) (v)--(u2) (v)--(u3) (u1)--(w1) (u2)--(w2) (u3)--(w3) ;
\draw[dashed] (u1)--(u2) (w1)--(w2);
\node at (4.25,2.5) {\Dp} ; \node at (4.25,2) {$\longrightarrow$} ;
\end{scope}
\begin{scope}[shift={(8,0)}]
\node (v) [b,label=below:$\strut v$] at (0,0.4) {};
\node (u1) [b,label=left:$r_1\,$] at (-2,2) {};
\node (u2) [b,label=below right:$\,r_2$] at (0,2) {} ;
\node (u3) [b,label=right:$\,r_3$] at (2,2)  {};
\node (w1) [b,label=above:$\,w_1$] at (-2,3.6) {};
\node (w2) [b,label=above:$\,w_2$] at (0,3.6) {};
\node (w3) [b,label=above:$\,w_3$] at (2,3.6) {};
\draw (v)--(u1) (v)--(u2) (v)--(u3)  (u1)--(u2) (u3)--(w3) (w1)--(w2) ;
\draw[dotted] (u1)--(w1) (u2)--(w2);
\end{scope}
\end{tikzpicture}
\end{center}
In all cases, we have (uniquely) identified a \dsp\ which produces
a graph $\widetilde{G}$ with the same degree sequence as $G$
which contains a triangle satisfying
the stated conditions. This proves~(i).

To prove the second statement, adapt the above proof by letting $\ell$ be the
length of the shortest
cycle which contains the path $r_1 v r_j$ for some $j\in \{2,3\}$.
If $\ell\neq 5$ then the \ds\ described above creates a triangle $T$ with
$r_1\in T$, as required. If $\ell=5$ then the \ds\ described above works if
and only if $w_2r_3$ is not an edge, assuming (after swapping the labels of $r_2$ and $r_3$ if necessary) that the 5-cycle $v r_1 w_1 w_2 r_2 v$ is present.
This completes the proof of~(ii).
\end{proof}

To describe how to remove a planted triangle, we need some terminology.
Let $(G,\widetilde{G})$ be a \ds, where $\widetilde{G}$ is obtained from $G$ by deleting the edges
$e_1, e_2$ and inserting the edges $e_3,e_4$.  If $\widetilde{H}$ is any graph which
contains the edges $e_3,e_4$ and does not contain $e_1,e_2$, then the
\emph{inverse} of the \ds\ $(G,\widetilde{G})$, applied to $\widetilde{H}$, is the \ds\ $(\widetilde{H},H)$
where $H$ is
obtained from $\widetilde{H}$ by deleting the edges $e_3,e_4$ and inserting the edges
$e_1,e_2$.  This matches the usual notion of an inverse operation when
$\widetilde{H}=\widetilde{G}$, since then $H=G$, but here we need to apply inverse \ds es when
$\widetilde{H}\neq \widetilde{G}$.

For us, $(G,\widetilde{G})$ will be a \dsp\ which plants a triangle.
Then we will perform two or three \ds es from $\widetilde{G}$ to reach a graph $\widetilde{H}$ which
contains $e_3,e_4$ but not $e_1,e_2$. Finally we will remove the planted
triangle by applying the inverse of $(G,\widetilde{G})$ to $\widetilde{H}$.
There is one exception, which is case~(IXa) below, where we do not
plant a triangle but instead simulate the required switch with a simulation path of length three.

\begin{lem}
\label{lem:no-pivot}
Suppose that $(G,H)$ is a switch which removes the edges $a_1a_2$, $a_3a_4$
and replaces them with the edges $a_1a_3$, $a_2a_4$. Further suppose
that the following conditions all hold:
\begin{itemize}[topsep=0pt,itemsep=0pt]
\item The switch has Type~\emph{(a)} (no diagonals present),
\item no pair of vertices
in $D=\{a_1,a_2,a_3,a_4\}$ have a common neighbour, and
\item there is no
triangle on vertices $a_j,u,w$ with %$u\not\in A_{5-j}$ and
$\{u,w\}\cap D = \es$, where $j\in [4]$.
\end{itemize}
Then there is a \ds\ simulation path of length
at most five which simulates the switch $(G,H)$.
\end{lem}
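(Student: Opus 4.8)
The plan is to reduce Lemma~\ref{lem:no-pivot} to Lemma~\ref{lem:pivot}(iv) by planting a triangle on one of the four switch vertices using Lemma~\ref{lem:plant}, performing the length-2 simulation from case~(IV), and then unplanting. The first task is to check that Lemma~\ref{lem:plant} actually applies: we need a vertex $a_j\in D$ with three distinct neighbours $r_1,r_2,r_3$ of degree at least~$2$, none of which lies in a triangle through $a_j$. Since $d_n\ge 3$ every $a_j$ has at least three neighbours; the hypotheses of Lemma~\ref{lem:no-pivot} (no triangle $a_j u w$ with $\{u,w\}\cap D=\emptyset$, and no two vertices of $D$ share a neighbour) must be leveraged to show that $a_j$ sits in no triangle with any of its neighbours. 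One subtlety: the switch itself modifies edges at $a_j$, so I would first decide at which of $a_1,\dots,a_4$ to plant (there is freedom here, e.g.\ pick $a_1$), then apply Lemma~\ref{lem:plant} to the graph $G$ (or to $G$ with one switch-edge deleted, as the lemma explicitly permits), producing a \dsp\ $(G,\widetilde G)$ that creates a triangle $T\ni a_1$ meeting the neighbour set $R$.

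Next I would verify that after planting, the switch can be carried out via case~(IV) of Lemma~\ref{lem:pivot}. Planting a triangle on $a_1$ with vertices $a_1,u,w$ where $u,w\notin D$ is precisely the second bullet of hypothesis in part~(IV) (a triangle $a_j u w$ with $u,w\in[n]\setminus D$ distinct), taking $j=1$. We must confirm the planting does not accidentally create a diagonal or a common neighbour of $D$-vertices that would move us into an easier case (which would be fine but changes the bookkeeping), and — more importantly for the \emph{length} count — that the planting \dsp, the two \ds es of case~(IV), and the unplanting \ds\ are genuinely \ds es in the graphs where they are applied, and that the composite path has length at most~$5$. Since planting is~1 step, case~(IV) is~2 steps, and unplanting is~1 step, that is only~$4$; the fifth step is the slack that lets us handle the exceptional situation referenced in the text, case~(IXa), where the planting/unplanting scheme fails and we instead give an explicit length-$3$ simulation path directly. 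I would enumerate the cases exactly as the running numbering (VII), (VIII), (IX), (IXa), \ldots\ suggests, distinguishing on where a short cycle through the chosen $D$-vertex sits — mirroring the $\ell=4$, $\ell=5$, $\ell\ge 6$ split inside Lemma~\ref{lem:plant} — and checking that the inverse \dsm\ that unplants the triangle is applicable to the graph $\widetilde H$ reached after the case-(IV) moves, i.e.\ that $\widetilde H$ still contains the two planted edges and omits the two removed ones.

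The delicate points are: (a)~verifying Lemma~\ref{lem:plant}'s hypothesis that \emph{no} triangle through $a_1$ meets a neighbour — this needs the "no triangle $a_juw$ with $\{u,w\}\cap D=\emptyset$" hypothesis together with the "no two $D$-vertices share a neighbour" hypothesis to rule out triangles $a_1 x a_k$ with $x\notin D$ and $a_k\in D$ (which would be a path $a_1 x a_k$ giving $x$ a common neighbour of $a_1$ and $a_k$, forbidden) as well as triangles $a_1 a_k a_m$ inside $D$ (ruled out since Type~(a) means no diagonals, so $a_1$ is adjacent to at most one other $D$-vertex); (b)~ensuring the planted triangle does not reuse a switch edge or interfere with the case-(IV) construction, e.g.\ that $u\notin A_{5-1}=A_2$, which holds because after planting we still have $A_1\cap A_2=\emptyset$ (planting adds $u$ to $A_1$ but $u$ was chosen as a neighbour of $a_1$ that is not a neighbour of $a_2$, since $a_1,a_2$ share no neighbour); (c)~the exceptional case~(IXa), where presumably the only obstruction to planting on \emph{every} $a_j$ simultaneously forces a very rigid local structure (something like each $a_j$ being the centre of a configuration where Lemma~\ref{lem:plant}(ii) cannot pin the triangle to a usable vertex), and in that rigid situation one exhibits a bespoke $3$-step path. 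I expect case~(IXa) — pinning down exactly which configuration is exceptional and constructing its explicit simulation path while checking all four intermediate graphs lie in $\Gnd$ and all three transitions are \ds es — to be the main obstacle; the rest is a systematic, if lengthy, diagram-chase using the inverse-\ds\ formalism already set up before the lemma statement.
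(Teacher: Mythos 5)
Your high-level strategy (plant a triangle, run the length-2 simulation of Lemma~\ref{lem:pivot}(IV), unplant, with one exceptional bespoke path) is indeed the paper's strategy, and your hypothesis check (a) for Lemma~\ref{lem:plant} is sound. But there is a genuine gap at the heart of the lemma: your plan to plant \emph{at a vertex of $D$} only works when some $a_j$ has degree at least~$4$ (the paper's case (VII)). When $\deg(a_i)=3$ for all $i\in[4]$ — which the hypothesis $d_n\geq 3$ fully allows, and which is where all the difficulty lies — you cannot take $R$ to be three neighbours of $a_1$ avoiding $a_2$, and your fallback of applying Lemma~\ref{lem:plant} to $G-\{a_1a_2\}$ with $v=a_1$ fails outright because $a_1$ then has only two neighbours. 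If instead you include $a_2$ in $R$, Lemma~\ref{lem:plant}(i) may plant the triangle on $a_1,a_2,w$, i.e.\ through the very switch edge being removed; such a triangle does not satisfy the hypothesis of Lemma~\ref{lem:pivot}(IV) (which needs $\{u,w\}\cap D=\es$) and is destroyed by the switch itself, so the plant--simulate--unplant scheme collapses. The paper's actual proof spends cases (VIII) and (IX) resolving exactly this: when all four degrees are~3 it plants either by an explicit ad hoc \dsp\ exploiting a path of length $\leq 2$ between $A_1$ and $A_3$ (VIIIa, VIIIb), or by applying Lemma~\ref{lem:plant}(ii) at $v=u_1\in A_1$ (a vertex \emph{outside} $D$) with $R=\{a_1,w_1,w_2\}$ in the graph $G-\{a_1a_2\}$ (IXb). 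None of this appears in your proposal; "enumerate the cases as the numbering suggests" defers precisely the content that makes the lemma true.

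Relatedly, you misplace where the length bound~$5$ comes from. Case (IXa) (an edge between two neighbours of $u_1$) is not a failure mode patched by a fifth step; it is handled directly by an explicit length-$3$ path with no planting. The length-$5$ paths occur in case (IXc), where the $5$-cycle exception of Lemma~\ref{lem:plant}(ii) prevents forcing $a_1$ into the planted triangle; there the paper plants a triangle on $u_1,w_1,z$ disjoint from $D$, runs the three \ds es of (IXa), and then unplants ($1+3+1=5$). Your distance bookkeeping also needs care: the correctness of the (IXa) path uses that $u_1$ and $v\in A_3$ are at distance at least~$3$, which is why the paper first splits off case (VIII); and in your point (b) the relevant non-adjacency is $u\notin A_4$ (the condition $u\notin A_{5-j}$ with $j=1$), not $u\notin A_2$. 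So the proposal is a correct sketch only of the easy regime and would need the entire (VIII)/(IX) analysis, essentially as in the paper, to become a proof.
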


\begin{proof}
We will number the cases which arise when defining the simulation paths.
This will be needed later in the proof of rapid mixing.
Since Lemma~\ref{lem:pivot} has cases (I)--(VI), here our
numbering starts from (VII).
We treat these cases in order, so we only move to~(VIII) if~(VII) does not
hold, and so on.
\begin{itemize}
\item[(VII)]
First suppose that after at least one of $a_1,a_2,a_3,a_4$ has degree at least~4.
Choose the least-labelled element of $D$ which has degree at least~4: after
relabelling the elements of $D$ if necessary, we can assume that $a_1$ has degree
at least~4.
We apply Lemma~\ref{lem:plant}(i) with $v=a_1$, and $R$ a set of three neighbours of $a_1$ other than $a_2$. This identifies a \dsp\ $(G,\widetilde{G})$ where $\widetilde{G}$ contains a triangle of the form $a_1,u,w$.  From $\widetilde{G}$ we can perform the two \ds es described in Lemma~\ref{lem:pivot}, part (IV), to delete the edges $a_1a_2, a_3a_4$ and replace them with $a_1a_3$, $a_1a_4$ using two \ds es.  From the resulting graph $\widetilde{H}$ we apply the inverse of the \dsp\ $(G,\widetilde{G})$.  This ``unplants'' the planted triangle and takes us to $H$.
This gives a simulation path of length~4.
\item[(VIII)]
Next, assume that $\deg(a_i)=3$ for all $i\in [4]$ and that there is a path
of length at most~2 in $G\setminus D$ between some element of $A_j$ and some
element of $A_{2+j}$, for some $j\in \{1,2\}$. Take a shortest
such path, and by relabelling if necessary assume that $j=1$.
Let $A_1 = \{u_1,u_2\}$ and $A_3 = \{v_1,v_2\}$ and by relabelling
these vertices if necessary, assume that the path has endvertices $u_1$, $v_1$.
Note that $u_2\not\in A_4$ and $v_1\not\in A_2$, since no two elements of
$D$ have a common neighbour (by assumption).

(VIIIa):\ If the path has length~1 then the edge $u_1v_1$ is present and we can plant a triangle as follows.
\begin{center}
\begin{tikzpicture}[xscale=0.75,yscale=0.75,inner sep=0pt,font=\scriptsize]
\begin{scope}[shift={(0,0)}]
\node (a1) [b,label=below:$\strut a_1$] at (0,0.4) {};
\node (u2) [b,label=above:${}_{}u_2\,$] at (-1,2) {};
\node (u1) [b,label=above:${}_{}u_1$] at (1,2) {} ;
\node (a3) [b,label=below:\strut $a_3$] at (4,0.4)  {};
\node (v2) [b,label=above:${}_{}v_2$] at (5,2) {};
\node (v1) [b,label=above:${}_{}v_1$] at (3,2) {};
\draw (u1)--(a1)--(u2) (v1)--(a3)--(v2) (u1)--(v1) ;
\draw[dashed] (u2)--(a3) (a1)--(v1) ;
\node at (7,1.5) {\Dp} ; \node at (7,1) {$\longrightarrow$} ;
\end{scope}
\begin{scope}[shift={(10,0)}]
\node (a1) [b,label=below:$\strut a_1$] at (0,0.4) {};
\node (u2) [b,label=above:${}_{}u_2\,$] at (-1,2) {};
\node (u1) [b,label=above:${}_{}u_1$] at (1,2) {} ;
\node (a3) [b,label=below:\strut $a_3$] at (4,0.4)  {};
\node (v2) [b,label=above:${}_{}v_2$] at (5,2) {};
\node (v1) [b,label=above:${}_{}v_1$] at (3,2) {};
\draw (u1)--(a1) (a3)--(v2) (u1)--(v1) (u2)--(a3) (a1)--(v1);
\draw[dotted] (u2)--(a1) (v1)--(a3);
\end{scope}
\end{tikzpicture}
\end{center}
We have a triangle $a_1,u_1,v_2$ which allows us to perform the two \ds es as
described in Lemma~\ref{lem:pivot}, part (IV), to simulate the required switch.
From the resulting graph, we apply the inverse of the triangle-planting \ds, to
``unplant'' the triangle and produce $H$.
This gives a simulation path of length~4.

(VIIIb): If the path has length~2 then (after possibly relabelling as described
above), $u_1$ and $v_1$ have a common neighbour $w$, where we choose $w$ to
be the least-labelled common neighbour of $u_1$, $v_1$.
Note that $a_3w \notin E$ because $\deg(a_3)=3$, and
$u_2v_1\notin E$ since otherwise it would be an edge between the neighbourhoods of $a_1,a_3$. Then we can plant a triangle as follows:
\begin{center}
\begin{tikzpicture}[xscale=0.85,yscale=0.85,inner sep=0pt,font=\scriptsize]
\begin{scope}[shift={(0,0)}]
\node (a1) [b,label=below:$\strut a_1$] at (0,1.0) {};
\node (u1) [b,label=above left:${}_{}u_1\,$] at (-1,2) {};
\node (u2) [b,label=below right:${}_{}u_2$] at (1,2) {} ;
\node (a3) [b,label=below:\strut $a_3$] at (4,1.0)  {};
\node (v2) [b,label=above:${}_{}v_2$] at (5,2) {};
\node (v1) [b,label=below left:${}_{}v_1$] at (3,2) {};
\node (w) [b,label=above:${}_{}w$] at (0,3) {} ;
\draw (u1)--(a1)--(u2) (v1)--(a3)--(v2) (u1)--(w)--(v1) ;
\draw[dashed] (w)--(a1) (u2)--(v1) ;
\node at (7,2.5) {\Dp} ; \node at (7,2) {$\longrightarrow$} ;
\end{scope}
\begin{scope}[shift={(10,0)}]
\node (a1) [b,label=below:$\strut a_1$] at (0,1.0) {};
\node (u1) [b,label=above left:${}_{}u_1\,$] at (-1,2) {};
\node (u2) [b,label=below right:${}_{}u_2$] at (1,2) {} ;
\node (a3) [b,label=below:\strut $a_3$] at (4,1.0)  {};
\node (v2) [b,label=above:${}_{}v_2$] at (5,2) {};
\node (v1) [b,label=below left:${}_{}v_1$] at (3,2) {};
\node (w) [b,label=above:${}_{}w$] at (0,3) {} ;
\draw (u1)--(a1) (v1)--(a3)--(v2) (u1)--(w) (w)--(a1) (u2)--(v1);
\draw[dotted] (w)--(v1) (u2)--(a1);
\end{scope}
\end{tikzpicture}
\end{center}
Now we have a triangle $a_1,u_1,w$ which we can use to perform the two \dsp es
as specified in Lemma~\ref{lem:pivot}, case~(IV).  From the resulting graph, we
apply the inverse of the triangle-planting \ds, to ``unplant''  this triangle
and produce $H$. This gives a simulation path of length~4.
\item[(IX)]
Here we assume that $\deg(a_i)=3$ for all $i\in [4]$ and that there is
no path of length at most 2 between an element of $A_j$ and an element of
$A_{2+j}$, for $j=1,2$.  Let $A_1 = \{u_1,u_2\}$.

(IXa)
First suppose that there is an edge between two neighbours of $u_j$,
and by relabelling if necessary we can assume that $j=1$.  Then
$w_1w_2\in E$ for some $w_1, w_2\in \Nb(u_1)$ which are distinct from $a_1$.
(For definiteness, let $(w_1,w_2)$ be the lexicographically-least such pair.)
In this subcase,
we do not plant a triangle: instead we use the following simulation path of length~3, where $v$ is the least-labelled element of $A_3$.
Note that $u_1a_3\not\in E$ since $A_1\cap A_3 =\es$,
and $vw_2\not\in E$ since $u_1$ and $v$ are at distance at least~3 in $G$.
\begin{figure}
\begin{center}
\begin{tikzpicture}[xscale=0.75,yscale=0.75,inner sep=0pt,font=\scriptsize]
\begin{scope}[shift={(0,0)}]
\node (u1) [b,label=left:$u_1\,$] at (0,2) {};
\node (w1) [b,label=left:$w_1\,$] at (-0.8,3.5)  {};
\node (w2) [b,label=right:$\,w_2$] at (0.8,3.5) {};
\node (a2) [b,label=left:$a_2\,$] at (0,-1.2) {} ;
\node (a1) [b,label=left:$a_1\,$] at (0,0.4) {};
\node (a4) [b,label=right:$\,a_4$] at (2,-1.2) {} ;
\node (a3) [b,label=right:$\,a_3$] at (2,0.4) {};
\node (v) [b,label=right:$\,v$] at (2,2) {};
\draw (a2)--(a1)--(u1)--(w1)--(w2)--(u1) (a4)--(a3)--(v);
\draw [dashed] (a3)--(u1) (v)--(w2) ;
\draw[dotted] (a2)--(a3)--(a1)--(a4)--(a2);
\node at (4,1.5) {\Dm}; \node at (4,1.0) {$\longrightarrow$};
\end{scope}
\begin{scope}[shift={(6,0)}]
\node (u1) [b,label=left:$u_1\,$] at (0,2) {};
\node (w1) [b,label=left:$w_1\,$] at (-0.8,3.5)  {};
\node (w2) [b,label=right:$\,w_2$] at (0.8,3.5) {};
\node (a2) [b,label=left:$a_2\,$] at (0,-1.2) {} ;
\node (a1) [b,label=left:$a_1\,$] at (0,0.4) {};
\node (a4) [b,label=right:$\,a_4$] at (2,-1.2) {} ;
\node (a3) [b,label=right:$\,a_3$] at (2,0.4) {};
\node (v) [b,label=right:$\,v$] at (2,2) {};
\draw (a2)--(a1)--(u1)--(w1)--(w2) (a4)--(a3) (a3)--(u1) (v)--(w2);
\draw [dashed] (a1)--(a3) (a2)--(a4);
\draw[dotted] (a2)--(a3) (a1)--(a4) (u1)--(w2) (a3)--(v);
\end{scope}
\begin{scope}[shift={(5,-6)}]
\node at (-2,1.5) {\Dp}; \node at (-2,1.0) {$\longrightarrow$};
\node (u1) [b,label=left:$u_1\,$] at (0,2) {};
\node (w1) [b,label=left:$w_1\,$] at (-0.8,3.5)  {};
\node (w2) [b,label=right:$\,w_2$] at (0.8,3.5) {};
\node (a2) [b,label=left:$a_2\,$] at (0,-1.2) {} ;
\node (a1) [b,label=left:$a_1\,$] at (0,0.4) {};
\node (a4) [b,label=right:$\,a_4$] at (2,-1.2) {} ;
\node (a3) [b,label=right:$\,a_3$] at (2,0.4) {};
\node (v) [b,label=right:$\,v$] at (2,2) {};
\draw (a3)--(a1)--(u1)--(w1)--(w2) (u1)--(a3) (v)--(w2) (a2)--(a4);
\draw [dashed]  (a3)--(v) (u1)--(w2) ;
\draw[dotted] (a2)--(a3)--(a4)--(a1)--(a2);
%\begin{scope}[shift={(14,-7)}]
\begin{scope}[shift={(6,0)}]
\node at (-2,1.5) {\Dp}; \node at (-2,1.0) {$\longrightarrow$};
\node (u1) [b,label=left:$u_1\,$] at (0,2) {};
\node (w1) [b,label=left:$w_1\,$] at (-0.8,3.5)  {};
\node (w2) [b,label=right:$\,w_2$] at (0.8,3.5) {};
\node (a2) [b,label=left:$a_2\,$] at (0,-1.2) {} ;
\node (a1) [b,label=left:$a_1\,$] at (0,0.4) {};
\node (a4) [b,label=right:$\,a_4$] at (2,-1.2) {} ;
\node (a3) [b,label=right:$\,a_3$] at (2,0.4) {};
\node (v) [b,label=right:$\,v$] at (2,2) {};
\draw  (v)--(a3)--(a1)--(u1)--(w1)--(w2)--(u1) (a2)--(a4);
\draw[dotted] (a2)--(a3)--(a4)--(a1)--(a2) (u1)--(a3) (v)--(w2);
\end{scope}
\end{scope}
\end{tikzpicture}
\end{center}
\end{figure}

(IXb) Now suppose that there is no edge between any two neighbours of $u_j$
for $j\in \{1,2\}$.
Let $w_1,w_2$ be the two least-labelled neighbours of $u_1$ distinct from $a_1$.
	We want to apply Lemma~\ref{lem:plant}(ii) to the graph
	$G^{(-)}:=G - \{ a_1a_2\}$ with the edge $a_1a_2$ deleted, setting $v=u_1$ and
	$R = \{ a_1,w_1, w_2\}$ with $r_1=a_1$.
	Suppose that
Lemma~\ref{lem:plant}(ii) succeeds in identifying a \dsp\ $(G^{(-)},\widetilde{G}^{(-)})$ such that $\widetilde{G}^{(-)}$
contains a triangle on the vertices $a_1, u_1, z$ for some vertex $z\not\in D$.
Let $\widetilde{G}$ denote the graph on $\Gnd$ obtained by
inserting the edge $a_1a_2$ in $\widetilde{G}^{(-)}$.
Then $(G,\widetilde{G})$ is a \ds\ which can be applied to $G$.
From $\widetilde{G}$, we can perform the two \ds es
described in Lemma~\ref{lem:pivot} case (IV), to exchange the switch edges.
Then from the resulting graph, apply the inverse of the \ds\
$(G,\widetilde{G})$
to ``unplant'' the triangle and produce the graph $H$.  This gives a
simulation path of length~4.

(IXc)  Finally we assume that there is no edge between any two neighbours of
$u_j$, for $j\in \{1,2\}$, but that Lemma~\ref{lem:plant}(ii) did not
succeed when applied to the graph $G^{(-)}:= G - \{a_1a_2\}$ with
$v=u_1$, $R= \{a_1,w_1,w_2\}$ and $r_1=a_1$.
(As above, we assume that $w_1,w_2$ are the two least-labelled
neighbours of $u_1$ distinct from $a_1$.)
In particular, this implies that $G^{(-)}$ has a 5-cycle $C$ which
contains the
path $a_1u_1w_1$, but $G^{(-)}$ does not have any 4-cycle which
contains the path $a_1u_1w_j$ for $j\in \{1,2\}$.
The same properties are true for $G$.
Since $a_1$ has degree~2 in $G^{(-)}$,
the 5-cycle $C$ must be $a_1u_1w_1zu_2$ for some vertex $z$.
%and we apply Lemma~\ref{lem:plant}(ii) to the graph with edge $a_1a_2$ removed,
%we conclude that $G$ contains the
%5-cycles $u_1 a_1 u_2 w_1 w_3 u_1$ and $u_1 a_1 u_2 w_1 w_2 u_1$,
%for some vertex $z$,
%and that no 4-cycle in $\widehat{G$} contains the path $a_1 u_1 w_j$ for $j\in \{2,3\}$.
%(For definiteness, choose $w_1$ to be least-labelled common neighour of $u_2, w_2, w_3$.)
Note that $u_1z$ cannot be an edge of $G$, or we are in case (IXa),
and $w_2u_2$ cannot be and edge of $G$ as this would give
rise to a 4-cycle $a_1u_1w_2u_2$ in $G^{(-)}$ containing the path $a_1u_1w_2$.
%Also, $w_2w_3$ is
%a non-edge since no edge exists in $G$ between two neighbours of $u_1$, and  $u_1w_1$ is a non-edge since $u_1$ has degree 3.
Hence we can perform the following \ds\ in $G$ to plant a triangle
on $u_1, w_1, z$.

\begin{center}
\begin{tikzpicture}[xscale=0.75,yscale=0.75,inner sep=0pt] %,font=\scriptsize]
\begin{scope}
\node (u1) [b,label=below:$\strut u_1$] at (-1,1) {};
\node (a1) [b,label=below:$\strut a_1$] at (0,0) {};
\node (w1) [b,label=below:$\strut \, w_2$] at (0.7,2) {} ;
\node (w2) [b,label=left:$w_1\,$] at (-2,2)  {};
\node (u2) [b,label=right:$\,u_2$] at (2.5,2) {};
\node (z) [b,label=above:$z\strut$] at (0,3.5) {};
\draw (u1)--(a1) (u1)--(w1) (u1)--(w2) (a1)--(u2) (z)--(w2)  (u2)--(z);
\draw[dashed] (u2)--(w1) (z) -- (u1);
%\draw[dotted] (w1) -- (w2) (a1) -- (z);
\node at (5,2.5) {\Dp} ; \node at (5,2) {$\longrightarrow$} ;
\end{scope}
\begin{scope}[shift={(10,0)}]
\node (u1) [b,label=below:$\strut u_1$] at (-1,1) {};
\node (a1) [b,label=below:$\strut a_1$] at (0,0) {};
\node (w1) [b,label=below:$\strut \, w_2$] at (0.7,2) {} ;
\node (w2) [b,label=left:$w_1\,$] at (-2,2)  {};
\node (u2) [b,label=right:$\,u_2$] at (2.5,2) {};
\node (z) [b,label=above:$z\strut$] at (0,3.5) {};
\draw (u1)--(a1) (z) -- (u1) (w1) -- (u2) (u1)--(w2)--(z) (a1)--(u2);
%\draw[dotted] (u1) -- (z) (a1) -- (z);
\draw[dotted] (u1)--(w1) (z) -- (u2);
\end{scope}
\end{tikzpicture}
\end{center}

Then we have the situation in (IXa), where $u_2$ and its neighbours are temporarily unimportant. The three \ds es given in case (IXa) perform the required switch, and from the resulting graph we apply the inverse of the \ds\ which planted $u_1w_1w_2$, reaching the graph $H$.  This gives a simulation path of length five.\qedhere
\end{itemize}
\end{proof}

Now Theorem~\ref{thm:five} follows immediately by 
combining Lemma~\ref{lem:pivot} and Lemma~\ref{lem:no-pivot}.
Finally, Theorem~\ref{thm:irreducible} is obtained from Theorem~\ref{thm:five}
using
the fact that the switch chain is irreducible on $\Gnd$ for all graphical degree 
sequences $\dsq$ (see~\cite{fulkerson}).

%%%%%%%%%%%%%%%%%%%%%%%%%%%%%%%%%%%%%%%%%%%%%%%%%%%%%%%%%%%%%%%%%

\section{Rapid mixing of the modified Metropolis \ds\ chain}\label{sec:rapid}

In this section we prove Theorem~\ref{thm:rapid}
using the two-stage direct canonical path construction method,
restated above as Theorem~\ref{thm:CDGH}.
Recall the definitions from Section~\ref{sec:background-paths}.
Let $\cM$ denote the modified Metropolis \ds\ chain (see Figure~\ref{fig:modified-triangle-switch-code}) and let $\cM'$ denote the modified Metropolis switch chain 
(see Figure~\ref{fig:switch-metropolis}).  

In Section~\ref{sec:irreducible} we constructed a set
\[ \Sigma=\{\sigma_{GH} \mid (G,H)\in E(\cM')\}\]
of $(\cM,\cM')$-simulation paths.
Recall the parameters defined in (\ref{eq:parameters}).
First we prove an upper bound on the parameter $B(\Sigma)$, which
measures the maximum number of simulation paths containing any given \ds.

\begin{lem}
\label{lem:count}
Let $\dsq$ be a graphical degree sequence with $d_1\geq d_2\geq \cdots \geq d_n\geq 3$
and let $\Sigma$ be the set of $(\cM,\cM')$-simulation paths
defined in Section~\emph{\ref{sec:irreducible}}.
Then
\[ B(\Sigma)\leq 20 d_1^2\, (2M + d_1^2) \]
where $M = M(\dsq) = \sum_{j\in [n]} d_j$.
\end{lem}

\begin{proof}
Let $e=(X,Y)$ be a transition of the \ds\ chain. We want an upper bound on the
number of simulation paths $\sigma_{GH}$ which contain $e$, where $(G,H)$ is
a transition of the switch chain.  To do this, we will work through the cases
(I)--(IX) from Lemmas~\ref{lem:pivot} and~\ref{lem:no-pivot}.
We will not try to optimise the bounds we determine. We are only interested here in correctness and, to a lesser extent, simplicity.

Let $D = \{a_1,a_2,a_3,a_4\}$ denote the vertices involved in the switch $(G,H)$
that we need to identify.

\begin{itemize}
\item[(I)] In Case~(I) we have $(G,H)=(X,Y)$ which happens in exactly~1 way.
\item[(II)]
In Case~(II), the simulation path has length~2 and only involves the vertices in $D$.
So the simulation path is $G,G_1,H$ and in $G_1$, the edges $a_1a_4 a_2a_3$ are
present and the switch edges $a_1a_2$, $a_1a_3$, $a_2a_4$, $a_3a_4$ are all absent.
Therefore, if $(X,Y)=(G,G_1)$ then we know $G=X$ and $H$ is obtained from $G_1$ by
replacing edges $a_1a_4$, $a_2a_3$ with $a_1a_3$, $a_2a_4$ (the only perfect
matching of $D$ not seen in either $G$ or $G_1$).  This can be done in exactly one way.
By a similar argument, if $(X,Y) = (G_1,H)$ then there is exactly one way to reconstruct
$G$.  So overall, there are 2 possibilities for $(G,H)$ if $(X,Y)$ is involved
in a simulation path in Case~(II).
\item[(III)]
There are exactly 2 possibilities arising from Case~(III), using a very similar argument to that used for Case~(II).
\item[(IV)]
The simulation path defined in Case~(IV) has length~2, so it is $G,G_1,H$ for some $G_1$.
Each \ds\ in the path involves 3 vertices in $D$ and one vertex not in $D$.
(Note that in the second subcase, the vertex $w$ of the triangle on $a_1, u, w$
is not involved in either of the \ds es in the simulation path.)
There are 4 ways to label the vertices involved in a given \ds, and in each case
the remaining vertex of $D$ is a neighbour of one of the four vertices involved in the
\ds.  Hence there are at most $4d_1$ ways to label all vertices in $D$ or
involved in the \ds, and using these vertices we can uniquely construct $H$ from $(G,G_1)$,
or uniquely construct $G$ from $(G_1,H)$.  Hence there are at most $8 d_1$ possibilities
for $(G,H)$ if $(X,Y)$ is involved in a simulation path in Case~(IV).

\item[(V)] The simulation path in Case~(V) has length~4, so it is $G, G_1, G_2, G_3, H$ for some graphs $G_1, G_2, G_3$.
If $(X,Y) = (G,G_1)$ then one of the edges switched out is the diagonal
$a_1a_4$, the the other edge switched out is $uv$ where $u,v\not\in D$.
Then $a_2$ is a common neighbour of of $a_1,u,v$ in $G$, and $a_3$
is a common neighbour of $a_4, u,v$ in $G$. So there are 2 choices for
which of the removed edges is $a_1a_4$ and at most $d_1^2$ ways to choose
the unordered pair
$\{a_2, a_3\}$. From $G=X$ and this information we can recover $H$. (Note that
by symmetry we do not need to multiply by a factor of 2 to decide which vertex of the edge $a_1a_4$ is $a_1$.  The same argument works for the final
\ds\ in the simulation path.

Next, if $(X,Y) = (G_1,G_2)$ then the \ds\ involves the vertices $a_1,a_2,a_3,a_4$, removing the edges $a_1a_2$, $a_3a_4$ which are present in $G$ and
inserting the diagonals $a_1a_4$, $a_2a_3$. Hence we know that
$H[D]$ is given by the only perfect matching of $D$ not seen in either $X$ or
$Y$.  To recover $G$ from $X$ we must know the vertices $u,v$ which are
both common neighbours of $a_2$ and $a_3$, so there are at most $d_1^2$
choices for $u,v$. The same argument holds if $(X,Y)= (G_2,G_3)$.  Overall,
there are at most $6d_1^2$ ways that $(X,Y)$ could be involved in a
simulation path in Case~(V).

\item[(VI)] The simulation path in Case~(VI) has length~4.  Each \ds\ in this
simulation path involves four of the vertices $a_1,a_2,a_3,a_4, u,v,w$.
There are 4 ways to label the vertices involved in each \ds\ in this path,
and then the remaining 3 vertices can be chosen in at most $d_1^3$ ways,
since the subgraph induced by $\{a_1,a_2,a_3,a_4,u,v,w\}$ is
connected at each step.  Once the identities of these vertices are known,
from any \ds\ on the simulation path, we can construct $(G,H)$ uniquely.
It follows that there are at most $16 d_1^3$
ways that $(X,Y)$ could be involved in a simulation path in Case~(VI).

\item[(VII)] The simulation path in Case~(VII) has length~4, consisting
of a triangle-planting \ds, then the simulation path from Case~(IV) in
the resulting graph, then the ``unplanting'' \ds. Lemma~\ref{lem:plant}
is used to choose the triangle-planting \ds, with $v=a_1$
and $R\subseteq \Nb(a_1)\setminus D$. If $(X,Y)$ is the
triangle-planting \ds\ $(G,\widetilde{G})$ then $X=G$.
If (in the application of Lemma~\ref{lem:plant}) $\ell=4$ then the
triangle-planting \ds\ involves one vertex
from $D$ and three vertices outside $D$.  There are 4 ways to label the
vertices involved in the \ds, which identifies $a_1$ (say).
Then there at most $d_1$ ways to choose $a_2$ from the neighbours of $a_1$,
and at most $M$ ways to choose the vertices $(a_3,a_4)$. Using
these vertices we can construct $H$. So there are at most $4d_1 M$
ways that $(X,Y)$ can be the triangle-planting \ds\ when $\ell=4$,
and the same bound holds when $\ell=5$.

However, if $\ell\geq 6$ then
the triangle-planting \ds\ does not involve $a_1$, but involves two
neighbours $u_1, u_2$ of $a_1$ (the neighbours form a triangle with $a_1$
after the \ds).  Here there are 2 choices for the edge $u_1u_2$
which is inserted by the \ds, then at most $d_1^2$ choices for $(a_1,a_2)$
and at most $M$ choices for $(a_3,a_4)$.  Once these vertices are known,
we can construct $H$ from $G$.  There are at most $2d_1^2 M$ ways
that $(X,Y)$ can be the triangle-planting switch when $\ell\geq 6$.
Hence, overall there are at most $8d_1 M + 2 d_1^2 M$ ways that
$(X,Y)$ could be the triangle-planting \ds\ in Case~(VII).
The same bound holds for the ``unplanting'' \ds\ at the end of the simulation path, and we must also add the $8d_1$ choices that arise if $(X,Y)$ is used as the second or third \ds\ in the simulation path. (We multiply $8d_1$ by three as there
are three possibilities for $\ell$, as described above.) As a grand total,
there are at most
\[ 4d_1^2 M + 16 d_1 M + 24d_1\]
possiblities for $(G,H)$ if $(X,Y)$ is part of a simulation path
in Case~(VII).

\item[(VIII)] In subcase (VIIIa), the triangle-planting \ds\ involves
two vertices from $D$, say $a_1, a_3$, and two vertices outside $D$.
There are 4 ways to label the vertices involved in the \ds\ and then
at most $d_1^2$ ways to choose $(a_2,a_4)$.  So if $(X,Y)$ is the
triangle-planting \ds\ in subcase (VIIIa) then there are at most
$4d_1^2$ options for $(G,H)$, as $G=X$ and $H$ can be constructed
from $G$ using these known vertices.  The same bound holds for the
unplanting \ds\ in subcase (VIIIa), and again we have at most $8d_1$
options from the second or third \ds\ in the simulation path, giving
at most $8d_1^2 + 8d_1$ choices for $(G,H)$ from subcase (VIIIa).

In subcase (VIIIb), the triangle-planting \ds\ involves exactly
one vertex from $D$, say $a_1$, and three vertices outside $D$.
One of the vertices involved in the triangle-planting \ds\ is a neighbour
of $a_3$, so there are at most $4d_1^3$ ways to identify
all vertices in the triangle-planting \ds\ and all vertices in $D$.
The same bound holds for the unplanting \ds, and again we must include
the bound $8d_1$ to cover the second and third \ds es in the
simulation path.  Hence there are at most $8d_1^3 + 8d_1$ choices
for $(G,H)$ if $(X,Y)$ is part of a simulation path in subcase (VIIIb).

Overall, there are at most
\[ 8d_1^3 + 8d_1^2 + 16d_1\]\
choices for $(G,H)$ if $(X,Y)$ is part of a simulation path in
case~(VIII).

\item[(IX)] In subcase (IXa), the simulation path has length~3, so it is
$G, G_1, G_2, H$ for some graphs $G_1, G_2$.  (Here we do not plant a triangle.)
The first \ds\ involves one vertex of $D$, say $a_3$, and a neighbour of $a_1$.
So there are at most $4d_1^3$ ways to identify all vertices in the \ds\
as well as $(a_1, a_2, a_4)$.  Here $G=X$ and we can construct $H$ using the
vertices already identified.  The second \ds\ $(G_1,G_2)$ involves the
four vertices of $D$, with $u_1$ a common neighbour of $a_1$ and $a_3$ (say) and
where $u_1w_1w_2v$ is a path in $G_1$.  Hence there are at most $2d_1^4$ ways to
identify each of the vertices of $D\cup \{u_1,w_1,w_2\}$.  Using this we can
construct $G$ from $G_1$ and then construct $H$.  Finally, the third switch involves
exactly one vertex from $D$, say $a_3$, but now it is possible that $a_2$ and $a_4$
belong to a different component than $a_3$ in both $G_2$ and $H$.
Since $H=Y$, there are at
most $4d_1 M$ ways to identify the vertices of $D$ and then construct $G$ from $H$.
Overall there are most $4d_1 M + 2 d_1^4 + 4d_1^3$ choices for $(G,H)$ if
$(X,Y)$ is part of a simulation path in subcase (IXa).

In subcase (IXb), we call Lemma~\ref{lem:plant} to identify a triangle-planting
switch $(G,\widetilde{G})$.  Then from $\widetilde{G}$ we perform two \ds es as described in Case~(IV), then we
``unplant'' the triangle by performing the inverse \ds\ in the resulting
graph.  There are several possibilities which we must analyse:
\begin{itemize}
\item If Lemma~\ref{lem:plant} uses a 4-cycle which includes the path $a_1u_1w_j$
for $j\in\{1,2\}$, then the triangle-planting \ds\ does not involve any
	vertex of $D$,
	but involves a neighbour of $a_1$, say.  Hence there
	are at most $4 d_1^2 M$ ways to identify the vertices of $D$.
	Since $G=X$, this allows us to construct $H$.
\item If $a_1u_1 w_j$ is not part of a 4-cycle or 5-cycle ($j=1,2$)
then the triangle-planting
\ds\ involves $a_1$, and hence there are at most $4d_1 M$ ways to identify
the vertices of $D$ and hence construct $H$ from $G=X$.
\item If $a_1u_1 w_j$ is in a 5-cycle %$a_1 u_1 w_j z u_2$,
but is not in any 4-cycle,
%and $r_2$ is another neighbour of $u_1$ which is not adjacent to $r_1$ or $z$,
then
the triangle-planting \ds\ does not involve any element of $D$ but it involves
neighbours of $a_1$, say.  So there are at most $4d_1^2 M$ ways to identify the
vertices of the \ds\ and of $D$, which allows us to construct $H$ uniquely from $G=X$.
\end{itemize}
In each case, the same bounds hold for the final ``unplanting'' \ds, and there are
at most $8d_1$ choices for $(G,H)$ which arise if $(X,Y)$ is the second or third
\ds\ in the simulation path.  (We multiply $8d_1$ by 3 to account for the three
possible situations for $\ell$.)
In total, this gives at most
\[ 2(8 d_1^2 M + 4 d_1 M ) + 24d_1 = 16d_1^2 M + 8d_1 M + 24d_1\]
choices for $(G,H)$ when $(X,Y)$ is part of
a simulation path in subcase (IXb).

%\med{Changed next case}

In subcase (IXc), we require a \ds\ to plant a triangle, say $(G, \widetilde{G})$.
The \ds\ involves no vertex from $D$ but involves a neighbour of one vertex
from $D$, say $a_1$.  Here $G=X$ and there are at most $4 d_1^2 M$ ways to identify
the elements of the \ds\ vertices and $D$, and hence construct $H$ from $G$.
The same bound holds for the final ``unplanting'' \ds.  The other three
\ds es in the simulation path match those in subcase (IXa), and we proved
above that there are at most
$4d_1 M + 2 d_1^4 + 4d_1^3$ choices for $(G,H)$ if $(X,Y)$ is one of these
three \ds es.
This gives at most
\[ 4d_1 M + 2 d_1^4 + 4d_1^3 +2(4d_1^2M) = 8 d_1^2 M + 4d_1 M + 2 d_1^4 + 4d_1^3\]
choices for $(G,H)$ when $(X,Y)$ is part of a simulation path in subcase (IXc).

Hence overall, there are at most
\[ 24 d_1^2 M + 16 d_1 M + 4 d_1^4 + 8d_1^3 + 24d_1\]
choices for $(G,H)$ if $(X,Y)$ is part of a simulation path in Case~(IX).
\end{itemize}

Adding up contributions from every case, we see that a \ds\ $(X,Y)$
can belong to the simulation path of at most
\[  28 d_1^2 M + 32 d_1 M + 4 d_1^4 + 32 d_1^3 + 14d_1^2 + 72d_1 + 5\]
pairs $(G,G')$.
We can simplify this using the assumption that $d_1\geq 3$, and the fact that
$M\geq 2d_1$ (as can be seen by considering a star $K_{1,d_1}$).
This gives an upper bound of
\[ \left(28 + \frac{32}{3}\right) d_1^2 M +
   \left(4 + \frac{32}{3} + \frac{14}{9} + \frac{72}{27} + \frac{5}{81}\right) d_1^4
   < 20d_1^2\, (2M + d_1^2).
\]
   This completes the proof.
\end{proof}

We can now prove Theorem~\ref{thm:rapid}. 

\begin{proof}[Proof of Theorem~\ref{thm:rapid}]
Suppose that the switch chain is rapidly mixing on $\Gnd$ for all 
sequences $\dsq\in\mathcal{D}$. 
By Theorem~\ref{thm:Metropolis-switch}, we can assume that
the same is true for
the modified Metropolis switch chain $\cM'$. That is, there exists
	a polynomial $p(n)$ such that if $\dsq\in\mathcal{D}$ and $\dsq$ has
	length $n$ then the mixing time of $\cM'$ on $\Gnd$ is at most $p(n)$.
Using results of Sinclair~\cite{sinclair},
Guruswami~\cite[Theorem~4.9]{guruswami} proved that there exists a set $\Gamma'$
of canonical paths for $\cM'$ and a polynomial $q(n)$ such that
$\rho(\Gamma')\leq q(n)$ for all $\dsq\in\mathcal{D}$ with length $n$.

Now we calculate the other parameters needed to apply Theorem~\ref{thm:CDGH}. 
By Theorem~\ref{thm:five}, %Lemmas~\ref{lem:pivot} and~\ref{lem:no-pivot},  
the maximum simulation path length  is $\ell(\Sigma) = 5$.
Both chains have the same 
stationary distribution, namely the distribution which assigns to 
$G\in \Gnd$ the
probability $\lambda^{\min\{t(G),\nu\}}/\hat{Z}_\lambda(\dsq)$, where
$\hat{Z}_\lambda(\dsq)$ is the normalising factor.  
Hence the stationary ratio is given by $R(\cM,\cM')=1$.
Similarly, the transition probability $P(G,H)$ is given by 
\[ P(G,H) = \frac{1}{3\, a(\dsq)}\, \min\{\lambda^{\min\{t(H),\nu\}-\min\{t(G),\nu\}},1\}\]
whenever $G\neq H$ and $(G,H)$ is a switch (for $\cM'$) or a \ds\ (for $\cM$).
Therefore the simulation gap satisfies
\[ D(\cM,\cM') = \max_{\substack{uv\in E(\cM)\\zw\in E(\cM')}}
  \frac{\min\{ \lambda^{\min\{ t(z),\nu\}},\, \lambda^{\min\{t(w),\nu\}}\}}
        {\min\{ \lambda^{\min\{ t(u),\nu\}},\, \lambda^{\min\{t(v),\nu\}}\}}
	\leq \lambda^{\nu} \leq n^\alpha,
\]
using the assumptions of the theorem and (\ref{eq:upper-lambda-nu}) for the final inequality.
Hence it follows from Theorem~\ref{thm:CDGH} that there exists a set $\Gamma$
of canonical paths for $\cM$ with congestion
\[ \bar{\rho}(\cM) \leq D(\cM,\cM')\, R(\cM,\cM') \ell(\Sigma)\, B(\Sigma) \,\bar{\rho}(\Gamma') \leq 100n^\alpha\,  d_1^2\, (2M+d_1^2)\, q(n).\]
Arguing as in the proof of Theorem~\ref{thm:Metropolis-switch} we can conclude
that the mixing time $\tau_\lambda(\varepsilon)$ of the modified \ds\ chain is 
bounded above by
\[ \tau_\lambda(\varepsilon) \leq  100n^{\alpha} \, d_1^2\, (2M + d_1^2)\, q(n)\,
    \big( M\log M + \alpha \log n + \log(\varepsilon^{-1})\big).
    \]
Thus, the modified Metropolis \ds\ chain with parameter~$\lambda$ is rapidly mixing
for all $\dsq\in\mathcal{D}$, under the assumption that $\lambda\mu\leq \log^\alpha n$ for some $\alpha\in (0,1)$.
\end{proof}

The assumption that $\lambda\mu\leq \log^\alpha n$ is fairly restrictive,
but allows any constant value of $\lambda$ when $d_1=O(1)$.  
To go beyond this assumption would require a different approach to 
analysing the Markov chain.

\subsection{Slow mixing} \label{ss:slowmix}

The upper bound on $\lambda\mu$ required for Theorem~\ref{thm:rapid}
is largely a consequence of the Metropolisation. Consider a $d$-regular graph $G$ which consists of $k$ disjoint copies of $K_{d+1}$. (Note that these graphs are central to the irreducibility proof in~\cite{CDG21}.) Then $G$ contains no paths of length 4, so no \dsp\ is possible. Any \dsm\ must take one edge from each of two disjoint copies of $K_{d+1}$, and hence will destroy $2(d-1)$ triangles. So any change to $G$ will occur only with probability $\lambda^{-2(d-1)}$. Suppose that $\lambda>1$ is any constant and $d=\gamma\log n$, where $\gamma\to\infty$ as $n\to\infty$. Then, with high probability, no change will occur within $(1/\lambda)^d=n^{\gamma\log\lambda}$ steps of the chain, which is superpolynomial.

There are exponentially few $d$-regular graphs of this form, as a fraction
of the set of all $d$-regular graphs on $n=kd$ vertices, 
but each such graph gives a potential bottleneck for mixing. The small proportion suggests a solution by ``ignoring'' these graphs in some way. Unfortunately, the difficulty is not restricted to these graphs. Suppose we have only a single $K_{d+1}$ component. In order to break the $K_{d+1}$, we require the two vertices of a non-edge in the remainder of the graph to have many common neighbours, at least $d/4$, say. Then a \dsm\ could create at most $d/2$ triangles, which is not enough to offset the $d-1$ triangles that have been destroyed. If there are $o(n)$ triangles in the current graph, then we can bound the probability of such a non-edge appearing in the graph by
$O(n^{d/4+2}(d/n)^{d/2})=o(n^{-d/5})$, which is exponentially small if $d\to\infty$ with $n$. Thus all \dsm's involving the $K_{d+1}$ will have exponentially small probability of acceptance, and hence mixing will take exponential time.

Of course, the proportion of $d$-regular graphs with $n$ vertices and at least one $K_{d+1}$ component is still small. But, if $t(G)=\Omega(n)$, it is shown in~\cite{HLM} that $G$ contains many dense clique-like structures, called pseudocliques, which would require many steps to disassemble. Therefore, it seems unlikely that the \ds\ chain could generate regular graphs with $\Omega(n)$ triangles in polynomial time, except possibly for small constant~$d$. Nevertheless, the modified \ds\ chain may well be rapidly mixing for a wider range of $\mu$ and $\lambda$ than we are able to prove here.

%%%%%%%%%%%%%%%%%%%%%%%%%%%%%%%%%%%%%%%%%%%%%%%%%%%%%%%%%%%%%%%%%%%%%%%%%

\section{The asymptotic distribution of $t(G)$}\label{sec:distribution}

The aim of this section is to analyse the distribution of the number of
triangles in a random graph in $\Gnd$, both under the uniform distribution
and the Gibbs distribution $\pi_\lambda$.  We do this to justify the use
of the cut-off $\nu$ in the modified Metropolis switch chain and the
modified Metropolis \ds\ chain, see Remark~\ref{rem:indistinguishable}, though
the results of this section may also be of independent interest.

We denote the Poisson distribution with mean $\mugeneral$ by \pois{\mugeneral} and, if the random
variable $X$ has this distribution, we write in standard notation $X\distas\pois{\mugeneral}$ or $\law{X} =\pois{\mugeneral}$.
We wish to determine the approximate distribution of $t(G)$ for large $n$, 
under the uniform distribution on $\Gnd$. We first give an appropriate
definition of distributional approximation in this context, which has wider applicability.  We note that the traditional approach to distributional approximation, particularly in Statistics, has been through limit distributions,
usually with little or no error estimation.

\subsection{Distributional approximation}\label{sec:asymptotic}

Let $\cX_n$ be a sequence of discrete state spaces, and let $X_n,X'_n$ be random variables on $\cX_n$
such that $\law{X_n}=p_n,\law{X'_n}=p'_n$. We will say that $X_n$ \emph{approximates $X'_n$ asymptotically}, which we denote by $X'_n\distapp X_n$, 
if $p'_n(A)=p_n(A)+o(1)$ for all $A\subseteq\Sigma$ as $n\to\infty$,
That is, $X'_n\distapp X_n$ if and only if
\[ \max_{A\subseteq \cX}\big|\Pr(X'_n\in A) - \Pr(X_n\in A) \big| = o(1).\]
This involves a known metric on distributions,
the total variation distance, $\dtv(p'_n,p_n)$, as defined in~\eqref{eq:dtv}. So $X'_n\distapp X_n$ (and vice versa) if the total variation distance
\[ \dtv(p'_n,p_n) = \tfrac{1}{2}\sum_{x\in\cX_n} |p'_n(x) - p_n(x)|=o(1)\mbox{\,\ as\ \,}n\to\infty.\]
This relation is symmetric, reflexive and transitive, since $\dtv$ is a metric. However, transitivity holds only over a finite number of applications, since we must preserve the $o(1)$ term. So this is only a local equivalence relation on sequences $(\cX_n,p_n)$.

If $X_n$ has a known distribution, $\pois{\mugeneral}$ say, we also write $X'_n\distapp \pois{\mugeneral}$. This notation for distributional approximation is taken from~\cite{reinert}, but was used only informally there.

This definition is not new. Janson~\cite{Janson} used the term ``asymptotic equivalence'' for this relationship between sequences of random variables, using the notation $(X_n)\cong (X'_n)$. However, the focus in~\cite{Janson} is on the case where both $X'_n$ and $X_n$ are unknown distributions, and the equivalence indicates a stronger relationship than contiguity. There the relationship between $X_n$ and $X'_n$ is symmetrical. Here we assume $X_n$ has a known distribution which approximates the unknown distribution of $X'_n$, so the relationship is not symmetrical. Also, as we have noted above, this is not a true equivalence relation. Therefore, we prefer the above terminology and notation.

Note that requiring only $p'_n(x)=p_n(x)+o(1)$ for all $x\in\cX_n$ is not a useful notion of approximation. For example, suppose $\cX_n=[2n]$, $p_n(x)=1/n$ ($x\in[n]$), $p'_n(x)=1/n$,
($x\in[2n]\sm[n]$). Then $X_n$ approximates $X'_n$ in this weaker sense, since $p'_n(x)=p_n(x)\pm 1/n$.
However, $\dtv(p_n,p'_n)=1$ for all $n$, so this does not seem a reasonable approximation.

Though we will only use it in the discrete case, the approach to distributional approximation outlined above can be extended to arbitrary probability measures, as is done in~\cite{Janson}. It was initially developed for normal approximations~\cite{stein}, but later for other distributions (e.g.~\cite{chen,PRS,reinert}). The methods used have been elementary (e.g.~\cite{serfling}), coupling arguments (e.g.~\cite{PRS}), Stein's method~\cite{stein,BHJ} and semigroup methods (e.g.~\cite{LeCam,steele}). The aim has been to quantify errors in limit distributions in Statistics, or to quantify rates of convergence to these limit distributions, rather than to quantify asymptotic approximations.

We will use only elementary methods, but with an alternative criterion, which we now show is equivalent to the definition above.
\begin{lem}\label{lem:asympt}
  $X_n\distapp X'_n$ if and only if there is a set $S_n\subset\cX_n$ such that, as $n\to\infty$,
  $p'_n(x)=(1+o(1))p_n(x)$ for all $x\notin S_n$ and $\max\{p_n'(S_n),p_n(S_n)\}=o(1)$.
\end{lem}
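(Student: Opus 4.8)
The plan is to reduce everything to the total-variation formulation. As already observed above, $X_n\distapp X'_n$ is equivalent to $\dtv(p_n,p'_n)=o(1)$, and since $\dtv$ is symmetric it suffices to prove the stated equivalence in that form. I would prove the two implications separately, starting with the easy direction, namely that the condition is \emph{sufficient}. Given a set $S_n$ together with a sequence $\eta_n\to0$ such that $|p'_n(x)-p_n(x)|\le\eta_n\,p_n(x)$ for all $x\in\cX_n\setminus S_n$ and $\max\{p'_n(S_n),p_n(S_n)\}=o(1)$, I would split the defining sum according to membership of $S_n$,
\[ 2\dtv(p_n,p'_n)=\sum_{x\notin S_n}|p'_n(x)-p_n(x)|+\sum_{x\in S_n}|p'_n(x)-p_n(x)|, \]
bound the first sum by $\eta_n\sum_{x\notin S_n}p_n(x)\le\eta_n$ and the second, by the triangle inequality, by $p_n(S_n)+p'_n(S_n)$; both are $o(1)$, so $\dtv(p_n,p'_n)=o(1)$.

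For the converse (\emph{necessity}), I would write $\dtv(p_n,p'_n)=\varepsilon_n$ with $\varepsilon_n\to0$; if $\varepsilon_n=0$ take $S_n=\es$, and otherwise set
\[ S_n=\bigl\{x\in\cX_n:\ |p'_n(x)-p_n(x)|\ge\sqrt{\varepsilon_n}\,p_n(x)\bigr\}. \]
For $x\notin S_n$ one automatically has $p_n(x)>0$ and $|p'_n(x)/p_n(x)-1|<\sqrt{\varepsilon_n}$, so $p'_n(x)=(1+o(1))p_n(x)$ holds uniformly off $S_n$ with $\eta_n=\sqrt{\varepsilon_n}$. A Markov-type estimate, $\sqrt{\varepsilon_n}\,p_n(S_n)\le\sum_{x\in S_n}|p'_n(x)-p_n(x)|\le2\varepsilon_n$, gives $p_n(S_n)\le2\sqrt{\varepsilon_n}\to0$, and then $p'_n(S_n)\le p_n(S_n)+\sum_{x\in S_n}|p'_n(x)-p_n(x)|\le2\sqrt{\varepsilon_n}+2\varepsilon_n\to0$, so $S_n$ has the required properties.

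The argument is elementary throughout, and I do not expect a genuine obstacle; the only care needed is bookkeeping: atoms with $p_n(x)=0$ must be absorbed into $S_n$ (they are, by the form of the defining inequality, and their total $p'_n$-mass is at most $2\varepsilon_n$), the degenerate case $\varepsilon_n=0$ must be excluded before dividing by $\sqrt{\varepsilon_n}$, and one must verify that a single vanishing sequence $\eta_n$ works uniformly in $x$ — all of which the explicit choice $\eta_n=\sqrt{\varepsilon_n}$ resolves.
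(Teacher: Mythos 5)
Your proof is correct and takes essentially the same route as the paper: split the total-variation sum over an exceptional set for sufficiency, and for necessity threshold the pointwise discrepancy at $\sqrt{\dtv(p_n,p'_n)}$ and apply a Markov-type bound, with the degenerate and zero-mass atoms handled exactly as you indicate. The only cosmetic difference is that the paper defines $S_n$ by normalising $|p'_n(x)-p_n(x)|$ against $\max\{p_n(x),p'_n(x)\}$, which bounds $p_n(S_n)$ and $p'_n(S_n)$ simultaneously but needs a short extra step to convert to a multiplicative estimate relative to $p_n$, whereas your normalisation by $p_n(x)$ gives that estimate immediately and recovers $p'_n(S_n)=o(1)$ by a triangle inequality.
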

\begin{proof}
  If $S_n$ exists then there exist sequences $0<\delta_n,\epsilon_n=o(1)$ so that $|p'_n(x)-p_n(x)|<\delta_np_n(x)$ for $x\notin S_n$ and $\max\{p_n'(S_n),p_n(S_n)\}<\epsilon_n$. Hence
  \[ \dtv(p_n,p'_n) < \epsilon_n + \delta_n\sum_{x\notin S_n}p_n(x) < \epsilon_n + \delta_n =o(1),\]
  so $X_n\distapp X'_n$.

  Conversely, if $X_n\distapp X'_n$ then there is a sequence $0<\eta_n=o(1)$ such that $\dtv(p_n,p'_n)<\eta_n$.
  Let $\epsilon_n=\sqrt{\eta_n}=o(1)$ and define
  \[ S_n=\left\{x\in\cX_n:\frac{|p'_n(x)-p_n(x)|}{\max\{p_n(x),p'_n(x)\}}\geq \epsilon_n\right\}.\]
  Then
  \[ \epsilon_n^2 > \dtv(p_n,p'_n) \geq\epsilon_n\sum_{x\in S_n}\max\{p_n(x),p'_n(x)\}\geq \epsilon_n\max\{p_n(S_n),p'_n(S_n)\},\]
  so $\max\{p_n(S_n),p'_n(S_n)\}< \epsilon_n=o(1)$.

  If $x\notin S_n$, then we have $|p'_n(x)-p_n(x)|< \epsilon_n\max\{p_n(x),p'_n(x)\}$.
  By taking $n$ sufficiently large, we can assume that $\epsilon_n < \nicefrac12$, and hence
  $p'_n(x)\leq 2 p_n(x)$ for all $x\not\in S_n$.
  For all such $n$ and all $x\not\in S_n$, we conclude that
  $p'_n(x)/p_n(x) \in (1-2\epsilon_n,1+2\epsilon_n)$,
  and hence $p'_n(x) \in (1\pm\delta_n)p_n$
  where $\delta_n=2\epsilon_n=o(1)$ as $n\to\infty$.  This completes the proof.
\end{proof}

This formulation also gives explicit bounds on errors. In one direction, we can take $\eta_n=\epsilon_n + \delta_n$, while in the other we can take $\epsilon_n=\sqrt{\eta_n}$ and $\delta_n=2\sqrt{\eta_n}$.

We will apply Lemma~\ref{lem:asympt} to prove the following result.

\begin{thm}
Suppose that $\dsq$ is a degree sequence with $d_1=o(n^{1/9})$ and $d_n\geq 1$, such that $M_2\geq M$.  Let $\cX_n=\{0,1,\ldots,M_2/6\}$. 
\begin{enumerate}
\item[\emph{(i)}] Let $X_n'$
be the number of triangles $t(G)$ when $G$ is chosen uniformly at random from 
$\Gnd$.
Then $X_n'\distapp \emph{\pois{\mu}}$ where $\mu=\mu(\dsq)$.
\item[\emph{(ii)}]
Further suppose that $\lambda>1$ is a constant with $d_1 \log \lambda = o(\log n)$, and let $X_n'$
be the number of triangles $t(G)$ when $G$ is chosen from $\Gnd$ using the
Gibbs distribution $\pi_\lambda$.
Then $X_n'\distapp \emph{\pois{\lambda \mu}}$ where $\mu=\mu(\dsq)$.
\end{enumerate}
\label{thm:asympt-Poisson}
\end{thm}

In particular, part (ii) implies that the mean number of triangles under $\pi_\lambda$ is $\lambda\mu$, compared with $\mu$ under the uniform distribution.
Hence, in order to increase the number of triangles as far as possible, we should take $\lambda$ as large as possible, under the constraint $d_1\log\lambda=o(\log n)$.

To prove Theorem~\ref{thm:asympt-Poisson} we will take $S_n=\{G:t(G) > t_0\}$, for a suitably large $t_0$, to be determined.  
Thus $S_n$ is the tail of the distribution of $t(G)$. 
We divide the proof
by analysing first ``small'' values of $t<t_0$ in Section~\ref{sec:small_t}
and then ``large'' values of $t>t_0$ in Section~\ref{sec:small_t}.
Thus, in Section~\ref{sec:small_t} we consider $G\notin S_n$, and in Section~\ref{sec:large_t} we consider $G\in S_n$.

\medskip

Before proceeding, we make some more remarks about distributional approximations.
Note that $X'_n\distapp X_n$ does not imply that $X'_n,X_n$ have exactly the same properties.
Suppose $Y_n=h_n(X_n), Y'_n=h_n(X'_n)$ for a function $h_n:\cX_n\to\mathbb{C}$ (the complex numbers).
Let $\norm{h_n}=\max_{x\in\cX_n}|h_n(x)|$. Then we can only claim $\Expn(Y'_n)\sim\Expn(Y_n)$
if $h_n$ is uniformly bounded in $n$ using this norm. This follows from the (easily proved) inequality
\[ \big|\Expn[Y'_n]-\Expn[Y_n]\big| \leq \norm{h_n}\dtv(p_n,p'_n),\]
with equality possible if $h_n(x)=\sign\big(p'_n(x)-p_n(x)\big)$, where $\sign(x)\in\{-1,0,1\}$ (in the obvious way),
so $\norm{h_n}=1$ (unless $p'_n=p_n$ and $\dtv(p'_n,p_n)=0$). Indeed, this equality is another characterisation of $\dtv$. Note that the uniform boundedness requirement includes $X_n$ itself, so care is needed.

As an example, suppose $\cX_n=[n]$, $p_n=C_n/i^2$ ($i\in[n]$),
where $C_n\sim\pi^2/6$. (This example is related to the Cauchy distribution.)
However, $\Expn[X_n]=\Omega(\log n)\to\infty$ as $n\to\infty$.
The explanation is that $n$ is not a uniformly bounded function of $n$.
This is a known pathology in the limit theory of distributions. In this example,
$X'_n$ converges pointwise to a fixed distribution, but
$\lim_{n\to\infty} \Expn[X'_n]$ does not even exist.

In the limit distribution approach to approximation, we must restrict attention to uniformly
bounded functions. Here, if $h_n$ is not uniformly bounded, we have the additional option of estimating the contribution to the expectation from $S_n$, as we do below for Poisson approximations.

However, if $X'_n\distapp X_n$ for real-valued random variables, then $X_n$ and $X'_n$ do have the same
characteristic function $\Expn(e^{itx})$ in the limit as $n\to\infty$,
since $|e^{itx}|=1$ for all real $t$ and $x$. This could be used,
for example, to prove asymptotic normality of the Poisson approximations below in the limit
distribution sense.

Note that we cannot use this idea of approximation to compare discrete distributions
with continuous distributions. Discrete points have measure zero in a continuous distribution, so
we always have $\dtv(X_n,X'_n)=1$. Thus, any definition of approximation in this context must
use a weaker metric than $\dtv$. For real-valued random variables, the usual choice is the \emph{Kolmogorov metric} $\max_{x\in\bbR}|F_n(x)-F'_n(x)|$, where $F_n,F'_n$ are the respective distribution functions of $X_n,X'_n$.
Thus, the asymptotic normal results for triangles proved in~\cite{Gao} are valid for larger values of $\mu(\dsq)$
than the asymptotic Poisson results proved here, but they are much weaker approximations. In fact, the indirect method of proof used in~\cite{Gao} does not seem to allow for any quantification of errors.

\subsection{Small number of triangles}\label{sec:small_t}

In this section we will prove the following result.

\begin{thm}\label{prob:thm10}
Let $G$ be chosen uniformly from $\Gnd$ and let $\mu=\mu(\dsq)$.
Suppose that $M_2\geq M$ and that $t_0=\min\{\sqrt{M/d_1^3},\omega(n)d_1^3\}$ for some function $\omega(n)\to\infty$ as $n\to\infty$.
Then $\Pr(t(G)=t)\sim e^{-\mu}\mu^t/t!$ for all $t=o(t_0)$. 
\end{thm}

Our proof will be based on the \emph{switchings} technique of McKay~\cite{McKay1985} and others,
combined with the conditional probability estimates of Gao and Ohapkin~\cite{GaoOha}. 
The switchings technique has been applied in the analysis of the switch chain before, see for example~\cite{GG,GS}.
For reference, we paraphrase \cite[Corollary 2]{GaoOha} as follows. 

\begin{lem} \emph{(\cite[Corollary~2]{GaoOha})}\
\label{prob:eq10}
Let $H^+\in\Gnd[n,\dsq']$, where $\dsq'<\dsq$,
and let $M'=\sum_{i=1}^n d'_i=2|E(H^+)|$. 
Then, if $G$ is chosen uniformly from $\Gnd$,
\[
\Pr(uv\in G\mid H^+\subset G) = \bigg(1+O\Big(\frac{d_1^2}{M-M'}\Big)\bigg)\frac{(d_u-d'_u)(d_v-d'_u)}{M-M'+(d_u-d'_u)(d_v-d'_u)}.
\]
\end{lem}

In our application, $H^+$ will be a subgraph $H_t$ of $G$ containing exactly $t$ triangles with $t=o(M)$, which will imply that the error term in Lemma~\ref{prob:eq10} is $O(d_1^3/M)$.

\begin{rem}
In~\cite{GaoOha} there is also a subgraph $H^-$, edge-disjoint from $H^+$, which is conditioned on being absent from $G$. Since we make no use of this, we omit it. Also, we make the $1+o(1)$ term explicit in Lemma~\ref{prob:eq10}, since we iterate these estimates and hence exponentiate this term. We start with $H^+=\es$, $\dsq'=\mathbf{0}$, so $H^+$ remains small relative to~$G$.
\end{rem}

\begin{rem}
Note that Lemma~\ref{prob:eq10} gives essentially the unconditioned estimate on the graph which results from deleting the edges of $H^+$ from $G$, as might be expected using the configuration model for graphs with given degree sequences.
\end{rem}

We apply these estimates with the switching method, as follows. 
For $t=0,\ldots, M_2/6$, let $\cN_t=\{ G\in\Gnd: t(G)=t\}$
be the set of graphs with exactly $t$ triangles, and let $N_t=|\cN_t|$.
We will use $G_t$ to denote an element of $\cN_t$ chosen uniformly at random.
Consider the set $S_t^+$ of \dsp es from graphs in $\cN_t$ which create exactly one triangle and destroy none. Also define the set of the inverses of these \dsp es, namely the set $S_{t+1}^-$ of \dsm es from graphs in $\cN_{t+1}$ which delete exactly one triangle and create none. Let $s_t^+=|S_t^+|$ and $s_t^-=|S_t^-|$. Then we have
\begin{equation}\label{prob:eq20}
  N_t\, \Expn[s_t^+\mid G_t]\,=\,N_{t+1}\,\Expn[s_{t+1}^-\mid G_{t+1}],
\end{equation}
from which we can estimate $N_{t+1}/N_t$. We note that McKay and co-authors have considerably generalised this switchings idea,
for example in~\cite{HashMc}, but here we only require the simple version given above.

Recall that $\mu=\mu(\dsq)=\frac{M_2^3}{6M^3}$. We first prove a claim made in Section~\ref{sec:notation} above.
\begin{lem}\label{prob:lem05}
Let $G$ be chosen uniformly from $\Gnd$.
  If $d_1^2=o(M)$ and $M_2\geq M$ then $\Expn[t(G)]\sim\mu.$
\end{lem}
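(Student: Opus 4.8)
The plan is to evaluate $\Expn[t(G)]$ directly, using linearity of expectation together with the conditional edge-probability estimate~\eqref{prob:eq10}. By the definition of $t(G)$,
\[
\Expn[t(G)] = \sum_{\{i,j,k\}}\Pr\bigl(ij,\,jk,\,ki\in G\bigr),
\]
the sum running over all unordered triples $\{i,j,k\}$ of distinct vertices. For a fixed such triple I would expand by the chain rule,
\[
\Pr\bigl(ij,jk,ki\in G\bigr)=\Pr(ij\in G)\cdot\Pr(jk\in G\mid ij\in G)\cdot\Pr(ki\in G\mid ij,jk\in G),
\]
and estimate the three factors by applying~\eqref{prob:eq10} with $H^+$ taken to be, respectively, $\es$, the single edge $ij$, and the two-edge path $\{ij,jk\}$. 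Each of these choices of $H^+$ has $M'\le 4$, so the error term in~\eqref{prob:eq10} is $O(d_1^2/M)$ in every case, and each denominator $M-M'+(d_u-d'_u)(d_v-d'_v)$ equals $M\bigl(1+O(d_1^2/M)\bigr)$ since any product of two degrees is at most $d_1^2=o(M)$. Multiplying the three estimates and collecting errors then gives, uniformly over all triples,
\[
\Pr\bigl(ij,jk,ki\in G\bigr)=\bigl(1+O(d_1^2/M)\bigr)\,\frac{d_i(d_i-1)\,d_j(d_j-1)\,d_k(d_k-1)}{M^3}.
\]

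Writing $x_i=d_i(d_i-1)$ (so $\sum_i x_i=M_2$), summing over triples and pulling the uniform error factor outside the sum gives $\Expn[t(G)]=\bigl(1+O(d_1^2/M)\bigr)\,M^{-3}\sum_{\{i,j,k\}}x_i x_j x_k$. Next I would use the elementary symmetric-function identity
\[
6\sum_{\{i,j,k\}}x_i x_j x_k=\Bigl(\sum_i x_i\Bigr)^{3}-3\Bigl(\sum_i x_i\Bigr)\Bigl(\sum_i x_i^2\Bigr)+2\sum_i x_i^3=M_2^3-3M_2\sum_i x_i^2+2\sum_i x_i^3 .
\]
From $0\le x_i\le d_1^2$ we get $\sum_i x_i^2\le d_1^2 M_2$ and $\sum_i x_i^3\le d_1^4 M_2$, so the final two terms are $O(d_1^2 M_2^2)$ and $O(d_1^4 M_2)$. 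As $M_2\ge M$ and $d_1^2=o(M)$, we have $d_1^2 M_2^2/M_2^3\le d_1^2/M=o(1)$ and $d_1^4 M_2/M_2^3\le (d_1^2/M)^2=o(1)$, so $\sum_{\{i,j,k\}}x_i x_j x_k=(1+o(1))M_2^3/6$. Combining, $\Expn[t(G)]=(1+o(1))\,M_2^3/(6M^3)=(1+o(1))\mu$, as claimed.

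The symmetric-function manipulations and the denominator simplifications are routine; the part that will require the most care is verifying that the error term produced by~\eqref{prob:eq10} is genuinely uniform over all triples — so that it can be factored out of the sum — and that composing a bounded number (three) of applications of~\eqref{prob:eq10} keeps the multiplicative error at $1+O(d_1^2/M)$. This is also the point at which the hypothesis $M_2\ge M$ is used: without it the correction terms $3M_2\sum_i x_i^2$ and $2\sum_i x_i^3$ need not be negligible relative to $M_2^3$. (Observe that $d_1^2=o(M)$ forces $M\to\infty$, so the asymptotic statement is not vacuous.)
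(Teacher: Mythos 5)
Your proposal is correct and follows essentially the same route as the paper: three iterated applications of~\eqref{prob:eq10} give the triangle probability $\bigl(1+O(d_1^2/M)\bigr)\,d_i(d_i-1)d_j(d_j-1)d_k(d_k-1)/M^3$, and the sum over triples is compared with $M_2^3/6$, with $M_2\geq M$ used to make the discrepancy from non-distinct indices negligible. Your explicit symmetric-function computation simply spells out the step the paper compresses into the remark that the non-distinct terms contribute a relative error of $O(d_1^2/M_2)$.
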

\begin{proof}
Using~Lemma~\ref{prob:eq10}, and calculations similar to cases~\ref{case_a} and \ref{case_b} of Lemma~\ref{prob:lem10} below, we have, for triangles $i,j,k$,
\begin{equation}\label{prob:eq25}
\Expn[t(G)] = \Big(1+O\Big(\frac{d_1^2}{M}\Big)\Big)^3\, \sum_{\substack{i,j,k=1\\\text{distinct}}}^n\, \frac{d_i(d_i-1)d_j(d_j-1)d_k(d_k-1)}{6M^3}
 \sim \frac{M_2^3}{6M^3} = \mu,
\end{equation}
where the divisor 6  counts permutations of $i,j,k$ representing the same triangle. The exponent 3  on the error term in \eqref{prob:eq25} results from three uses of Lemma~\ref{prob:eq10} and replacing the three occurrences of $M+O(d_1^2)$ with $M$. A relative error of $O(d_1^2/M_2)$ arises from the
terms where $i,j,k$ are not distinct: these terms arise in $M_2^3$ but are
not present in the original sum.  Therefore we need to assume $M_2 = \Omega(M)$,
but we make the stronger assumption $M_2\geq M$. 
\end{proof}
We have $\mu(\dsq)=\frac{M_2^3}{6M^3}\geq\nicefrac16$ under our assumption that $M_2\geq M$, so $\mu=\Omega(1)$ as $n\to\infty$.

Note that the numerator in the summation in \eqref{prob:eq25} is independent of the
order in which the edges of the triangle are exposed, and depends only on its degree structure.
This observation is true for all small subgraphs, and we use it below without further comment.
\begin{lem}\label{prob:lem10}
  If $td_1^3=o(M)$ and $M_2\geq M$ then $\Expn[s_t^+\mid G_t]\,=\, \big(1+O(td_1^3/M)\big)\, 6M\mu$.
\end{lem}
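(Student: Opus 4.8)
The plan is to evaluate $\Expn[s_t^+\mid\cG_t]=\frac1{N_t}\sum_{G\in\cN_t}s_t^+(G)$ as an expectation over the random graph $\cG_t$ and to estimate it term by term using the Gao--Ohapkin bound~\eqref{prob:eq10}. First I would pin down which labelled configurations a switch in $S_t^+$ corresponds to. Such a switch deletes $a_1a_2,a_3a_4$ and inserts $a_1a_3,a_2a_4$, creating the single triangle $va_1a_3$ and destroying none; the witness $v$ is a common neighbour of $a_1$ and $a_3$ outside $\{a_1,a_2,a_3,a_4\}$, and because it is the only such common neighbour it is determined by the switch. So $s_t^+(G)$ counts the $5$-tuples $C=(v,a_1,a_2,a_3,a_4)$ of distinct vertices (with the evident symmetry accounted for) such that: (i) $va_1,va_3,a_1a_2,a_3a_4\in E(G)$ and $a_1a_3,a_2a_4\notin E(G)$, so the switch is a legal move; (ii) $v$ is the unique common neighbour of $a_1$ and $a_3$ and $a_2,a_4$ have no common neighbour, so exactly one triangle is created; (iii) neither $\{a_1,a_2\}$ nor $\{a_3,a_4\}$ has a common neighbour, so no triangle is destroyed. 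Calling these the \emph{valid} configurations, $\Expn[s_t^+\mid\cG_t]=\sum_C\Pr\big(C\text{ is valid for }\cG_t\big)$, the sum ranging over all $5$-tuples of distinct vertices.

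For a fixed $C$ I would estimate $\Pr(C\text{ valid})$ as follows. Condition on a subgraph $H_t$ of $\cG_t$ that carries all $t$ triangles of $\cG_t$ and no others; since $H_t$ has $O(t)$ edges and $t=o(M)$, it is small relative to $\cG_t$, so~\eqref{prob:eq10} applies with $H^+\supseteq H_t$. Revealing the four required edges one at a time and applying~\eqref{prob:eq10} each time, and noting that each of $v,a_1,a_3$ carries two of these edges while $a_2,a_4$ carry one, we get for a generic $C$ (one disjoint from $H_t$)
\[ \Pr\big(va_1,va_3,a_1a_2,a_3a_4\in E(\cG_t)\big)\;\approx\;\frac{d_v(d_v-1)\,d_{a_1}(d_{a_1}-1)\,d_{a_3}(d_{a_3}-1)}{M^3}\cdot\frac{d_{a_2}\,d_{a_4}}{M}, \]
where the suppressed relative error is $1+O(td_1^3/M)$, absorbing the per-triangle errors from building $H_t$ and the four applications of~\eqref{prob:eq10}. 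The remaining requirements (ii),(iii) and the two non-adjacency conditions cost only a further $1+O(d_1^3/M)$ factor: a fixed pair of bounded-degree vertices is non-adjacent with probability $1-O(d_1^2/M)$, and a fixed pair has expected number of common neighbours $O\big(d_1^2(M_2+M)/M^2\big)=O(d_1^3/M)$ since $M_2\le d_1M$. Configurations that meet $H_t$, and the degenerate $5$-tuples in which two roles coincide, contribute only $O(td_1^2/M)$ and $O(d_1/M)$ relative to the main term, and are absorbed.

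Finally I would sum over $C$. Discarding the distinctness constraint (a relative $O(d_1/M)$ change) the weight factors over the five roles, and with $\sum_i d_i(d_i-1)=M_2$, $\sum_i d_i=M$ and $\mu=M_2^3/6M^3$,
\[ \sum_{v,a_1,a_2,a_3,a_4}\frac{d_v(d_v-1)\,d_{a_1}(d_{a_1}-1)\,d_{a_3}(d_{a_3}-1)\,d_{a_2}\,d_{a_4}}{M^4}=\frac{M_2^{\,3}}{M^2}=6M\mu; \]
the hypothesis $M_2\ge M$ (hence $\mu=\Omega(1)$ and $6M\mu=\Omega(M)$) guarantees the collected errors are negligible against this. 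Assembling the pieces gives $\Expn[s_t^+\mid\cG_t]=\big(1+O(td_1^3/M)\big)\,6M\mu$, as claimed. This parallels the calculation behind~\eqref{prob:eq25} in Lemma~\ref{prob:lem05}, organised here into the cases according to how the degrees of $v,a_1,a_3$ enter.

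I expect the main difficulty to lie in the second step: rigorously replacing the conditioning ``$\cG_t$ has exactly $t$ triangles'' by conditioning on a witnessing subgraph $H_t$ so that~\eqref{prob:eq10} applies with the stated error, and --- more delicately --- showing that the absence conditions (ii),(iii), which constrain whole neighbourhoods rather than a bounded edge set, each cost only a $1+O(td_1^3/M)$ factor. The labelled counting of the first and third steps is then routine.
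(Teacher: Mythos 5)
Your proposal is correct and follows essentially the same route as the paper: the same $P_5$ parametrisation of a \dsp\ (your $(v,a_1,a_2,a_3,a_4)$ is the paper's $(i,j,k,\ell,m)$), the same iterated use of the Gao--Ohapkin estimate \eqref{prob:eq10} giving the main term $M_2^3/M^2=6M\mu$, and the same catalogue of corrections (degenerate tuples, configurations meeting $H_t$, the chords $a_1a_3,a_2a_4$, and extra triangles created or destroyed) with matching error magnitudes combining to $1+O(td_1^3/M)$. The only cosmetic difference is that the paper deduces ``no triangle destroyed'' from the $P_5$ avoiding $E(H_t)$ rather than imposing common-neighbour absence directly, and its bookkeeping is organised as an upper bound refined case by case; the level of rigour (including the treatment of conditioning on exactly $t$ triangles via $H_t$, which you rightly flag as the delicate point) is the same.
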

\begin{proof}
We first give an estimate of $\Expn[s_t^+\mid G_t]$, and then show that it is asymptotically tight.
A \dsp\ corresponds to a path of length 5 in $G$, which we will denote by $P_5$, though we make
no assumptions about other possible edges between its vertices.
The \dsm\ corresponding to this $P_5$ starts with a triangle on its 3 middle edges, and a disjoint edge between its end vertices.
We will label these as illustrated below.
\medskip

\centerline{%
\begin{tikzpicture}[yscale=0.75,xscale=1.25,font=\small]
  \begin{scope}
  \node[b,label=left:$i$]  (i)                      {};
  \node[b,label=below:$j$] (j) at ($(i) + (-45:1)$) {};
  \node[b,label=above:$k$] (k) at ($(i) + ( 45:1)$) {};
  \node[b,label=below:$\ell$] (l) at ($(j) + ( 0:1)$) {};
  \node[b,label=above:$m$] (m) at ($(k) + ( 0:1)$) {};
  \draw (l)--(j)--(i)--(k)--(m) ;
  \draw[dotted] (j)--(k) (l)--(m) ;
  \end{scope}
  \begin{scope}[xshift=5cm]
  \node[b,label=left:$i$]  (i)                      {};
  \node[b,label=below:$j$] (j) at ($(i) + (-45:1)$) {};
  \node[b,label=above:$k$] (k) at ($(i) + ( 45:1)$) {};
  \node[b,label=below:$\ell$] (l) at ($(j) + (  0:1)$) {};
  \node[b,label=above:$m$] (m) at ($(k) + (  0:1)$) {};
  \draw (k)--(j)--(i)--(k) (l)--(m) ;
  \end{scope}
  \draw[dotted] (j)--(l) (k)--(m) ;
\end{tikzpicture}}\vspace{1ex}

Then our initial estimate is the number of $P_5$'s, where we will temporarily ignore other restrictions:
\begin{align}
\Expn[s_t^+\mid G_t] &\leq
 \big(1+O(d_1^2/M)\big)^4\, \sum_{\substack{i,j,k,\ell,m=1\\\text{distinct}}}^n\frac{d_i(d_i-1)\, d_j(d_j-1)\, d_k(d_k-1)\, d_\ell\, d_m}{M^4}\nonumber \\
 &\leq \big(1 + O(d_1^2/M)\big)\, \frac{M_2^3M^2}{M^4} = 6M\mu,
\label{prob:eq30}
\end{align}
where the expression in the summation is obtained using Lemma~\ref{prob:eq10} with $H_t=\es$.
%and we temporarily suppress the $(1+O(d_1^2/M))^4=1+O(d_1^2/M)$ factor for convenience. (This factor reappears in case~\ref{case_a} below, so this will be automatically included in our analysis.)

The estimate~\eqref{prob:eq30} is an upper bound because we have ignored conditions that should be
imposed and which reduce the expectation. We will now refine this by considering each possible overestimate in turn.
This will give us a lower bound on the expectation which matches the upper bound asymptotically for small enough $t$ and $d_1$.
\begin{enumerate}[topsep=0pt,itemsep=0pt,label=(\alph*)]
  \item\label{case_a} First, we should have $i\neq j\neq k\neq \ell\neq m$. There are $\binom{5}{2}=10$ possible first-order equalities in \eqref{prob:eq30},   and we claim that each of these gives rise to a $1+O(d_1^2/M)$ error factor. Consider, for example if $i=j$. If the other vertices are distinct, then \eqref{prob:eq30} becomes
  \begin{equation*}
  \sum_{i,k,\ell,m=1}^n\frac{d_i^2(d_i-1)^2d_k(d_k-1)d_{\ell}d_m}{M^4}\leq \frac{d_1^2 M_2^2M^2}{M^4} = \frac{d_1^2M\mu}{M_2}\leq d_1^2\mu.
  \end{equation*}
  Comparing this with \eqref{prob:eq30} gives the claim. So overall the effect is no worse than a $(1+O(d_1^2/M))^{10}=1+O(d_1^2/M)$ factor. Note that higher-order equalities are automatically included, in fact over-counted, in these calculations.
  \item\label{case_b} We have used $M$ in the denominator in~\eqref{prob:eq30} rather than the $M-M'$ from Lemma~\ref{prob:eq10}. In fact, $H_t$ is a set of $t$ triangles, so $M'\leq 6t$, so the denominator in Lemma~\ref{prob:eq10} is at most $M+d_1^2$, and at least $M-6t$. So using $M$ produces a $(1+ O(t+d_1^2)/M)^4=1+ O(t+d_1^2)/M$ error factor in~\eqref{prob:eq30}.
  \item\label{case_c} We have not considered the effect of $H_t$ on the
  calculation of the quantity $M'_2=\sum_{i=1}^n (d_i-d'_i)(d_i-d'_i-1)$
  which corresponds to $M_2$ on $G$. We have
   \begin{equation*}
    M_2 \geq M'_2=\sum_{i=1}^n (d_i-d'_i)(d_i-d'_i-1)\geq \sum_{i=1}^n d_i(d_i-1)-2d_1\sum_{i=1}^n d'_i \geq M_2-12td_1.
    \end{equation*}
    So using $M_2$ produces a $(1+ O(td_1)/M_2)^3=1+ O(td_1)/M$ error factor in~\eqref{prob:eq30}.
  \item\label{case_d} We wish to ensure that almost all $P_5$'s do not contain an edge of $H_t$. We claim that these give rise to a $1+O(td_1^2/M)$ error factor. Consider, for example if $ij\in E(H_t)$. Then \eqref{prob:eq30} becomes
\begin{align*}
  2\sum_{ij\in E(H_t)}\sum_{k,\ell,m=1}^n\frac{(d_i-1)(d_j-1)d_k(d_k-1)d_{\ell}d_m}{M^3}&\leq\frac{2td_1^2 M_2M^2}{M^3}
   = \frac{2td_1^2 M_2^3M^2}{M^3M_2^2}\\ &= \frac{12td_1^2 \mu M^2}{M_2^2} = O(td_1^2\mu).
  \end{align*}
  Again, comparing this with \eqref{prob:eq30} gives the claim. Since there are only four edges, the overall the effect is no worse than a factor $(1+O(td_1^2/M))^{4}=1+O(td_1^2/M)$.

  Since it avoids edges of $H_t$, the \dsp\ move has probability at most
  $O(td_1^2\mu)/M$ of breaking any triangle.
  \item\label{case_e} We need to ensure that the edges $jk$ and $\ell m$ are absent in almost all $P_5$'s, so that they give a valid \dsp es.  We claim that these give rise to a $1+O(d_1^2/M)$ error factor. Consider $jk$. The expected number of $P_5$'s  in which it is present is at most
  \begin{equation*}
   \sum_{i,j,k,\ell,m=1}^n\frac{d_i(d_i-1)d_j(d_j-1)(d_j-2)d_k(d_k-1)(d_k-2)d_{\ell}d_m}{M^5}\leq \frac{d_1^2M_2^3M^2}{M^5}=O(\mu d_1^2),
  \end{equation*}
  and comparison with \eqref{prob:eq30} gives the claim. The calculation for $\ell m$ is similar.
  \item\label{case_f} We wish to ensure that the edges $jk$ and $\ell m$ are not edges of $H_t$. Otherwise, the $P_5$ will not give a valid \dsp.
  If $jk\in H_t$, then~\eqref{prob:eq30} becomes
  \begin{align*}
  2\sum_{jk\in E(H_t)} \sum_{i,\ell,m=1}^n & \frac{d_i(d_i-1)(d_j-2)(d_k-2)d_{\ell}d_m}{M^4}\leq\frac{2td_1^2 M_2M^2}{M^4}
   = \frac{2td_1^2 M_2}{M^2}\\ &= \frac{12td_1^2 \mu}{M} = O(td_1^2\mu/M).
  \end{align*}
  and, if $\ell m\in E(H_t)$,~\eqref{prob:eq30} becomes
  \begin{align*}
  2\sum_{\ell m\in E(H_t)}\sum_{i,j,k=1}^n & \frac{d_i(d_i-1)d_j(d_j-2)d_k(d_k-1)(d_\ell-1)(d_m-1)}{M^4} \leq\frac{2td_1^2 M_2^3}{M^4}\\
  & = \frac{12td_1^2\mu}{M} =  O(td_1^2\mu/M).
  \end{align*}
  Note that these events will not even occur asymptotically if $td_1^2\mu=o(M)$, since their expectation is then $o(1)$.
  \item\label{case_g} We must ensure that almost all $P_5$'s create only one triangle. The expected number in which the new edge $jk$ will create a second
   triangle $(hjk)$ is at most
  \begin{align*}
  \sum_{h,i,j,k,\ell,m=1}^n&\frac{d_h(d_h-1)d_i(d_i-1)d_j(d_j-1)(d_j-2)d_k(d_k-1)(d_k-2)d_{\ell}d_m}{M^6}\\
  & \leq \frac{d_1^2M_2^4M^2}{M^6} = \frac{6d_1^2\mu M_2}{M} = O(\mu d_1^3).
 \end{align*}
 The expected number of $P_5$'s in which the new edge $\ell m$ will create any triangle $(h\ell m)$ is at most
 \begin{align*}
  \sum_{h,i,j,k,\ell,m=1}^n&\frac{d_i(d_i-1)d_j(d_j-1)d_k(d_k-1)d_\ell(d_\ell-1)d_h(d_h-1)d_m(d_m-1)}{M^6}\\
  & = \frac{M_2^6}{M^6} = \mu^2 = O(\mu d_1^3).
 \end{align*}
 Combining these two cases gives an error factor $1+O(d_1^3/M)$.
\end{enumerate}
The effect of combining all these cases gives an overall error factor which we may bound by $1+O(d_1^2 (d_1+t)/M)=1+O(td_1^3/M)$, the critical cases arising from~\ref{case_d} and~\ref{case_g}.
Note also that~\ref{case_d} implies that almost all valid \dsp es give rise to a triangle in $G_{t+1}$ which is edge-disjoint from $H_t$.
\end{proof}

We now consider the \dsm es reversing these \dsp es.

\begin{lem}\label{prob:lem20}
Suppose that $M_2\geq M$.  If $td_1^2=o(M)$, then 
\[ \Expn[s_{t+1}^-\mid G_t]\,=\,(1+O(td_1^3/M))6(t+1)M.\]
\end{lem}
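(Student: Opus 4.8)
The plan is to prove Lemma~\ref{prob:lem20} by a direct switching count that mirrors, and is in fact simpler than, the proof of Lemma~\ref{prob:lem10}. A \dsm\ of the type counted by $s_{t+1}^-$ is the reverse of one of the \dsp es counted by $s_t^+$, so it is encoded by the right-hand (``triangle-plus-disjoint-edge'') configuration in the picture within the proof of Lemma~\ref{prob:lem10}: a triangle $ijk$ together with a vertex-disjoint edge $\ell m$ for which $j\ell$ and $km$ are non-edges. The switch removes $jk$ and $\ell m$ and inserts $j\ell$ and $km$; it destroys $ijk$ and nothing else and creates no triangle precisely when $jk$ lies in no second triangle, $\ell m$ lies in no triangle, and neither $\{j,\ell\}$ nor $\{k,m\}$ has a common neighbour. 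Thus $s_{t+1}^-$ counts such configurations in $G=\cG_{t+1}$ in which $ijk$ is one of the $t+1$ triangles of $G$.

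The key simplification over Lemma~\ref{prob:lem10} is that the triangle part of the configuration need not be counted via the Gao--Ohapkin estimate~\eqref{prob:eq10}: $G$ has \emph{exactly} $t+1$ triangles. Let $H_{t+1}$ be the union of these $t+1$ triangles, a subgraph with at most $3(t+1)$ vertices and at most $3(t+1)$ edges. For each triangle $T$, a short count over the choice of triangle edge of $T$ to delete and of the disjoint edge $\ell m$ (with the matching that fixes the inserted edges) gives a main term of $\sim 6M$, since $G$ has exactly $M/2$ edges and only $O(1)$ of them meet $T$. Summing over the $t+1$ triangles gives the claimed main term $6(t+1)M$; the conditioning on $\cG_{t+1}$ and the estimate~\eqref{prob:eq10} are needed only for the small corrections (absence of the boundary edges $j\ell,km$, and non-creation/non-destruction of unwanted triangles).

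The bulk of the work is verifying asymptotic tightness, proceeding exactly as in cases~\ref{case_a}--\ref{case_g} of Lemma~\ref{prob:lem10}: one bounds the expected number of configurations for which $\ell m$ meets $T$ (cheap); for which $\ell m\in E(H_{t+1})$, or the chosen triangle edge lies in a second triangle (cheap, plus a type-\ref{case_g} computation); for which $j\ell$ or $km$ is already an edge of $G$ (type-\ref{case_e}/\ref{case_f}, using~\eqref{prob:eq10} and $M_2\le d_1M$); or for which a newly inserted edge creates a triangle (type-\ref{case_g}). Each such event contributes a relative error $1+O(td_1^a/M)$ with $a\le3$, the dominant contributions being of order $td_1^3/M$ as in cases~\ref{case_d} and~\ref{case_g}; collecting them yields the stated relative error $O(td_1^3/M)$. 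As in case~\ref{case_f}, the events that would force $\ell m\in E(H_{t+1})$ have expectation $o(1)$ precisely under the hypothesis $td_1^2=o(M)$ (up to the slowly growing factor $\mu$), so they may simply be excluded.

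The main obstacle I anticipate is purely bookkeeping: freezing the triangle structure $H_{t+1}$ while $\ell m$ ranges over the edges of $G$ outside it, and checking that~\eqref{prob:eq10} may legitimately be applied to the handful of edges ($j\ell$, $km$, and the edges of a putative extra triangle) whose presence or absence must be controlled---which is harmless since the revealed subgraph has $O(t)=o(M)$ vertices and edges. Once this is in place, combining Lemmas~\ref{prob:lem10} and~\ref{prob:lem20} with the identity~\eqref{prob:eq20} pins down $N_{t+1}/N_t$ up to a factor $1+O(td_1^3/M)$, which is the actual target of this section.
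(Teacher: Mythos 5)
Your proposal matches the paper's own argument: the paper likewise counts, in a graph with exactly $t+1$ triangles, the $6$ labellings of a chosen triangle together with the $\sim M$ choices of the disjoint edge $\ell m$, reuses case~\ref{case_d} of Lemma~\ref{prob:lem10} to discard triangles sharing edges with $H_t$ and invalid choices of $\ell m$, and bounds the creation of new triangles by the inserted edges via~\eqref{prob:eq10}, combining everything into the factor $1+O(td_1^3/M)$. So the proposal is correct and takes essentially the same route, up to minor bookkeeping (e.g.\ the paper's dominant errors are $O(td_1^2/M)$ from edge-disjointness and $O(d_1^3/M)$ from triangle creation, which together give the stated $O(td_1^3/M)$).
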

\begin{proof}
  We must choose a triangle $(ijk)$ and a labelling of it as $i,j,k$, which can be done in $6$ ways.
  Then, from the proof of Lemma~\ref{prob:lem10} we may assume that the triangle $(ijk)$ is edge-disjoint from any other  since, from case~\ref{case_d} in the proof of Lemma~\ref{prob:lem10}, this will be true for all but a proportion $O(td_1^2/M)$.
  So deleting $jk$ will delete exactly one triangle. Note that we could show that $(ijk)$ will be vertex-disjoint from $H_t$, but
  edge-disjointness is sufficient here.

  Deleting $\ell m$ will delete no triangles if $\ell m \notin E(H_t)$. So there are at most $M$, and at least $M-2|E(H_t)|$ choices for $\ell m$.
  So there are $(1-O(t/M))M$ choices. If $\ell j\in E(H_t)$ or $mk\in E(H_t)$, the \dsm\ will be invalid, but again case~\ref{case_d} in the proof
   of Lemma~\ref{prob:lem20} implies that only a proportion $O(td_1^2/M)$ of \dsm es will $G_{t+1}$ can be invalid for this reason. Otherwise, the expected number of triangles $(\ell jh)$ created by inserting $\ell j$ will be at most
    \begin{equation*}
  \sum_{h=1}^n\frac{d_h(d_h-1)(d_j-2)(d_\ell-1)}{M^2}\leq \frac{d_1^2 M_2}{M^2} \leq \frac{d_1^3}{M}.
  \end{equation*}
  So the error factor in $\Expn[s_{t+1}^-\mid G_t]$ is $1+O(d_1^3/M)$. The same calculation for $mk$ gives the same error,
  so the total effect will remain $1+O(d_1^3/M)$.

  Combining the error estimates we have an overall $1+O(td_1^3/M)$ factor, completing the proof.
\end{proof}

We can now prove Theorem~\ref{prob:thm10}. 

\begin{proof}[Proof of Theorem~\ref{prob:thm10}]
It follows from Lemmas~\ref{prob:lem10} and~\ref{prob:lem20}  and \eqref{prob:eq20} that
\begin{equation*}
  \frac{N_{t+1}}{N_t}=(1+O(td_1^3/M))\frac{\mu}{t+1}.
\end{equation*}
Iterating this through $t, t-1,\ldots, 1$,
\begin{equation}\label{prob:eq40}
 \frac{\Pr(t(G)=t)}{\Pr(t(G)=0)}= \frac{N_{t}}{N_0}=(1+O(td_1^3/M))^t\frac{\mu^t}{t!} = (1+O(t^2d_1^3/M))\frac{\mu^t}{t!}\sim \frac{\mu^t}{t!},
\end{equation}
if $t^2d_1^3=o(M)$, which is $t=o(\sqrt{M/d_1^3}$), so is valid for $t\leq t_0$.
Summing~\eqref{prob:eq40}
\begin{equation}\label{prob:eq50}
  \frac{\Pr(t\leq t_0)}{\Pr(t=0)}\sim \sum_{t=0}^{t_0}\frac{\mu^t}{t!}=e^\mu - \sum_{t=t_0+1}^{\infty}\frac{\mu^t}{t!}.
\end{equation}
Now $d_1^3=o(t_0)$ implies $\mu=o(t_0)$, so the term on the right is
\begin{equation*}
  \sum_{t=t_0+1}^{\infty}\frac{\mu^t}{t!}< \sum_{t=t_0+1}^{\infty}\left(\frac{o(t_0)}{t_0}\right)^t\sim o(1)^{t_0}=o(1).
\end{equation*}
In Section~\ref{sec:large_t} below, we show that $\Pr(t\leq t_0)=1-o(1)$. So, since $\mu=\Omega(1)$, \eqref{prob:eq50} gives
\begin{equation}\label{prob:eq60}
  \frac{\Pr(t\leq t_0)}{\Pr(t=0)}=\frac{1-o(1)}{\Pr(t=0)}\sim e^\mu - o(1)= (1-o(1))e^\mu.
\end{equation}
From \eqref{prob:eq60}, $\Pr(t=0)\sim e^{-\mu}$, and so \eqref{prob:eq40} gives
 $\Pr(t(G)=t)\sim e^{-\mu}\mu^t/t!$, and so $t(G)$ is approximately Poiss($\mu$).
\end{proof}

We state a useful special case.

\begin{cor}\label{cor:thm10-special}
Let $G$ be chosen uniformly from $\Gnd$ and let $\mu=\mu(\dsq)$.
Suppose that $M_2\geq M$.
If $d_1=o(n^{1/9})$ and $d_n\geq 1$ then $\Pr(t(G)=t)\sim e^{-\mu}\mu^t/t!$ for all $t=o(n^{1/3})$. 
\end{cor}

\begin{proof}  
Taking the $\omega(n)$ function in Theorem~\ref{prob:thm10} to equal $n^{1/3}/d_1^3$, the given 
assumptions imply that $\omega(n)\to\infty$ and that the function $t_0$ defined in
the statement of Theorem~\ref{prob:thm10} satisfies $t_0\geq n^{1/3}$.  The result now follows
follows from Theorem~\ref{prob:thm10}.
\end{proof}

\begin{rem}
Gao~\cite{Gao} has shown a normal limit distribution for $t(G)$ for $d$-regular graphs as $n\to\infty$, in the form $(t-\mu)/\sqrt{\mu}\stackrel{d}{\to} \mathrm{N}(0,1)$, provided $d\to\infty$ and $d=O(\sqrt{n})$. This is a limit for \pois{\mu} as $\mu\to\infty$, but whether Poisson approximation holds for such large $d$ is an open question.
\end{rem}

\subsection{Large number of triangles}\label{sec:large_t}

Here we will establish much weaker bounds, since our goal is only to show that the tail of the distribution of $t(G)$ has very small probability in the equilibrium distribution, 
provided $\lambda$ and $d_1$ are small enough. Specifically, we prove the following. 
Our argument follows the approach of the general-purpose result of Hasheminezhad and McKay~\cite[Corollary~1]{HashMc}, but to make our proof self-contained we give the details here.

\begin{thm}\label{thm:uppertail}
Let $t_0 = \Omega(n^{1/3})$ and suppose that $d_1 = o(n^{1/9})$.  
Let $\lambda\geq 1$ be constant.
If $\lambda >1$ then we also assume that $d_1\log\lambda = o(\log n)$. 
If $G$ is chosen from $\Gnd$ using the distribution $\pi_\lambda$ then 
\[
  \textstyle{\Pr_{\pi_{\lambda}}}(t\geq t_0) = e^{-\Omega(n^{1/4})}.\]
  Here the implicit constant inside the $\Omega(\cdot)$ term is positive.
\end{thm}

\begin{proof}
Given $G\in \cN_t$, let $s^-(G)$ be the number of 
valid \dsm es from $G\in \cG_i$ which create no triangle.
A lower bound on $s^-(G)$ 
is given by taking any triangle and any edge at distance at least 3 from the triangle.
There are at least $M-6(d_1-1)^3$ choices for this directed edge. If $d_1=o(M^{1/3})$,
this is $(1-o(1))M$. So $s^-(G)\geq (6-o(1))tM$ since we must choose a triangle edge and a direction,
and $s^-(\cN_t)=\sum_{G\in\cN_t}s^-(G)\geq (6-o(1))tMN_t$. No triangles are created by any of these \dsm es, but more than one triangle
may be destroyed by removal of the switch edges. So we destroy at least 1
and at most $2(d_1-1)$ triangles. These bounds are tight, since every edge might lie in a $(d_1+1)$-clique,
as is the case for graphs comprising $n/(d_1+1)$ cliques of size $(d_1+1)$.

So all the \dsm es from $G\in\cN_t$ lead to a graph in $\cN'_t=\bigcup_{i=1}^{2(d_1-1)}\cN_{t-i}$.
Suppose a proportion $\gamma_i$ lead to $\cN_{t-i}$, so $\sum_{i=1}^{2(d_1-1)}\gamma_i=1$, and note that hence 
\[ \sum_{i=1}^{2(d_1-1)}\gamma_iN_{t-i}\leq \max \{N_{t-i}:1\leq i\leq 2(d_1-1)\}=N_{t'}\] 
for some $t-2(d_1-1)\leq t' \leq t-1$.
Observe that we will have $t'=t-2(d_1-1)$ if $G$ is $d_1$-regular and $t=nd_1(d_1-1)/6$, so the lower bound can be tight.
Moreover, van der Hoorn et al.~\cite{HLM} show that almost all $d_1$-regular graphs have almost all their triangles in large cliques for  $t(G)=\Omega(n)$, so the analysis we give here may be difficult to improve significantly.

An upper bound on the number $s^+(G')$ of valid \dsp es from any $G'\in\cN'_t$ is the total number of possible switches,
valid or not. There are at most $\sum_{i=1}^n 2\binom{d_i}{2}(d_1-1)^2= M_2(d_1-1)^2$ possibilities,
so there are at most $M_2(d_1-1)^2 N_{t-i}$ valid \dsp es from 
$\cN_{t-i}$ to $\cN_t$.
Consequently, we must have %$M_2(d_1-1)^2N_{t'}\geq (6-o(1))tMN_t$, and hence
\[ \frac{\gamma_i N_t}{N_{t'}} \leq \frac{\gamma_i N_t}{N_{t-i}} \leq (1+o(1)) \frac{M_2(d_1-1)^2}{6tM},\]
and summing over $i$ gives
\[\frac{N_t}{N_{t'}} \leq (1+o(1)) \frac{M_2(d_1-1)^2}{6tM}\leq \frac{\hat{\mu}}{t},\]
where $\hat{\mu}=(d_1-1)^3/6\geq M_2^3/M^3=\mu$, since $M_2\leq M(d_1-1)$.
Note that the final inequality is an equality if $G$ is $d_1$-regular.

We can iterate this to give
\[ \frac{N_t}{N_{0}}\leq \frac{\hat{\mu}^\ell}{tt'\cdots t^{(\ell-1)}},\]
where $t^{(j-1)}-t^{(j)} \leq 2d_1 -2$ for $0< j <\ell$, $t^{(0)}=t$, $t^{(\ell)}=0$.
The weakest bound results when $t^{(j-1)}-t^{(j)} = 2d_1 -2$ and $\ell= t/(2d_1-2)$, giving
\[ \frac{N_t}{N_0}\leq \frac{\hat{\mu}^{t/(2d_1-2)}}{(2d_1-2)^{t/(2d_1-2)}(t/(2d_1-2))!}
= \frac{(\hat{\mu}/(2d_1-2))^{t/(2d_1-2)}}{(t/(2d_1-2))!}< \bigg(\frac{e\hat{\mu}}{t}\bigg)^{t/(2d_1-2)},\]
using the bound $(t/(2d_1-2))!> (e^{-1}t/(2d_1-2))^{t/(2d_1-2)}$ from Stirling's approximation.
It follows that, when $G$ is chosen uniformly at random from $\Gnd$,
\[ \Pr(t(G)=t)  = \frac{N_t}{|\Gnd|} < \bigg(\frac{e\hat{\mu}}{t}\bigg)^{t/(2d_1-2)}.\]
If $\lambda > 1$ then we have assumed that $d_1\log\lambda = o(\log n)$,
and hence
$\lambda^{2d_1-2} \hat{\mu}=o(t_0)$. It follows for $\lambda\geq 1$ that 
$\lambda^{2d_1-2} \hat{\mu}\leq t_0/e^2$. 
Then under the Gibbs distribution $\pi_\lambda$,
\begin{align*}
  {\textstyle{\Pr_{\pi_\lambda}}}(t\geq t_0)\ 
  = \sum_{t=t_0}^{M_2/6}\frac{\lambda^t\, N_t}{Z_\lambda(\dsq)}
  &\leq 
  \sum_{t=t_0}^{M_2/6} \frac{\lambda^t\, N_t}{|\Gnd|}
  \\
  < &\ \sum_{t=t_0}^{M_2/6}\lambda^t\bigg(\frac{e\hat{\mu}}{t}\bigg)^{t/(2d_1-2)}
  <\ \sum_{t=t_0}^\infty\bigg(\frac{e\lambda^{2d_1-2}\hat{\mu}}{t}\bigg)^{t/(2d_1-2)}\\
  < &\ \sum_{t=t_0}^\infty e^{-t/(2d_1-2)}\ =\ e^{-t_0/(2d_1-2)}/\big(1-e^{-1/(2d_1-2)}\big)\\
  < &\ 2d_1e^{-t_0/(2d_1-2)}\ =\ e^{-\Omega(n^{1/4})},
\end{align*}
using the inequality $1-x < e^{-x}$ for $x>0$.   This concludes the proof.
\end{proof}

We can now prove Theorem~\ref{thm:asympt-Poisson}.

\begin{proof}[Proof of Theorem~\ref{thm:asympt-Poisson}]
Part (i) follows directly from Lemma~\ref{lem:asympt} using Corollary~\ref{cor:thm10-special} and Theorem~\ref{thm:uppertail} with $\lambda=1$.

For (ii), we must look at the distribution of $t(G)$ when $G$ is chosen from
$\Gnd$ using the Gibbs distribution $\pi_\lambda$.  Let $\unif$ denote the
uniform distribution on $\Gnd$.  Using Corollary~\ref{cor:thm10-special} and
Theorem~\ref{thm:uppertail}, 
\[
  \Ex{\unif}[\lambda^{t}]
    \sim \sum_{k\geq 0} \lambda^k e^{-\mu}\frac{\mu^k}{k!}\
    \sim e^{(\lambda-1)\mu}\, \sum_{k\geq 0} \frac{(\mu \lambda)^k}{k!} e^{-\mu\lambda}
    = e^{(\lambda-1)\mu}.
    \]
Hence, for the Gibbs distribution $\pi_\lambda$, if $s=o(n^{1/3})$ then
\[ {\textstyle{{\Pr}_{\pi_\lambda}}}(t(G)=s)=\frac{\lambda^s\Pr_{\mathrm{\unif}}(t(G)=s)}{ \Ex{\unif}[\lambda^{t}]}\sim\frac{\lambda^se^{-\mu}\mu^s/s!}{ e^{(\lambda-1)\mu}}
= \frac{e^{-\lambda\mu}(\lambda\mu)^s}{s!}.\]
Combining this with Theorem~\ref{thm:uppertail}
shows that $t(G)\distapp\pois{\lambda\mu}$ under $\pi_\lambda$, completing
the proof of part (ii). 
\end{proof}

\begin{rem}
\label{rem:indistinguishable}
If the assumptions of Theorem~\ref{thm:uppertail}(ii) hold and
$G$ is a random element of $\Gnd$ drawn from the distribution $\pi_\lambda$,
the probability that $t(G)>\nu$ is $o(1/q(n))$ for any polynomial $q(n)$.
Hence, after a ``burn-in'' period, if the modified Metropolis switch chain (Figure~\ref{fig:switch-metropolis}) is close
to equilibrium and we observe it for polynomially many steps, it should be
indistinguisable from the (unmodified) Metropolis switch chain (Figure~\ref{fig:switch-metropolis-unmodified}).  The same is true for the modified Metropolis \ds\ chain (Figure~\ref{fig:modified-triangle-switch-code}), compared with the unmodified Metropolis \ds\ switch chain. (The transition procedure of the unmodified Metropolis \ds\ chain is obtained from Figure~\ref{fig:modified-triangle-switch-code} by replacing $\min\{ t(H), \nu\}$ by $t(H)$ and replacing $\min\{ t(G),\nu\}$ by $t(G)$.)
\end{rem}

\subsection{Assumptions on the degree sequence}\label{s:assumptions}

In our asymptotic results we made the assumption that $M_2\geq M$.
We will now prove that this is equivalent to having average degree $\bar{d}\geq 2$.
For our Markov chain analysis the condition $\bard\geq 2$ is automatically satisfied, since we assume that $d_n\geq 3$ in Theorem~\ref{thm:rapid}. 

\begin{lem}\label{lem:dbar}
$M_2 \geq M$ \ifff $\bar{d}\geq 2$.
\end{lem}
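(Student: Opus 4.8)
The plan is to translate both sides of the claimed equivalence into statements about $\sum_i d_i^{\,2}$ and then compare them through one elementary identity. Writing $M=\sum_i d_i = n\bar{d}$, the condition $\bar{d}\ge 2$ is exactly $M\ge 2n$, i.e. $\sum_i(d_i-2)\ge 0$. Since $M_2=\sum_i d_i(d_i-1)=\sum_i d_i^{\,2}-M$, the condition $M_2\ge M$ is exactly $\sum_i d_i^{\,2}\ge 2M$, i.e. $\sum_i d_i(d_i-2)\ge 0$. So the task reduces to showing that $\sum_i d_i(d_i-2)\ge 0$ holds if and only if $\sum_i (d_i-2)\ge 0$.

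One implication needs no graphicality at all. For every integer $d\ge 0$ the product $(d-1)(d-2)$ of two consecutive integers is nonnegative, and $d(d-2)-(d-2)=(d-1)(d-2)$. Summing over all degrees gives
\[ (M_2-M)-(M-2n)=\sum_{i=1}^n (d_i-1)(d_i-2)\ \ge\ 0, \]
so $M_2-M\ge M-2n$. Hence if $\bar{d}\ge 2$ then $M\ge 2n$, and therefore $M_2-M\ge M-2n\ge 0$, i.e. $M_2\ge M$. This settles the ``$\Leftarrow$'' direction directly from the identity above.

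For the converse I would argue by contrapositive: assume $\bar{d}<2$, equivalently $|E|=M/2\le n-1$, and deduce $M_2<M$, i.e. $\sum_i d_i(d_i-2)<0$. This is the step where the assumption that $\dsq$ is graphical must be used, and it is the one I expect to be the main obstacle: the degree-only inequality from the previous paragraph only runs one way, so a realising graph $G$ of $\dsq$ (which exists by graphicality) has to be brought in. With at most $n-1$ edges $G$ is ``forest-like'' — it has at least $n-|E|\ge 1$ connected components, and each component of order $k$ carries at least $k-1$ edges — and the plan is to use this sparse structure (for instance, counting leaves and high-degree vertices component by component, or peeling off a vertex of minimum degree and inducting on $n$) to bound $\sum_i d_i^{\,2}-2M$ strictly below $0$. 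Once the converse is in hand, combining the two implications gives the stated equivalence, and the last displayed line of the excerpt (``For our Markov chain analysis the condition $\bar{d}\ge 2$ is automatically satisfied, since we assume $d_n\ge 3$'') records that in the regime $d_n\ge 3$ the hypothesis $M_2\ge M$ is free.
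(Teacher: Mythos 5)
Your reduction to comparing $\sum_i d_i(d_i-2)\ge 0$ with $\sum_i(d_i-2)\ge 0$ is fine, and your proof of the direction $\bar{d}\ge 2\Rightarrow M_2\ge M$ is correct: the identity $(M_2-M)-(M-2n)=\sum_{i=1}^n(d_i-1)(d_i-2)\ge 0$ (valid because the $d_i$ are integers) does the job. This is a mild variant of the paper's argument, which instead uses $0\le\sum_{i=1}^n(d_i-\bar{d})^2=M_2-M(\bar{d}-1)$; both are one-line, and neither needs graphicality.

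The genuine problem is the half you left as a plan. The implication $M_2\ge M\Rightarrow\bar{d}\ge 2$ is false, so no appeal to graphicality or to the forest-like structure of sparse realisations can complete your contrapositive: for the star $K_{1,3}$, with degree sequence $(3,1,1,1)$, one has $M_2=M=6$ (a value the paper itself records in the proof of Lemma~\ref{lem:paths}) while $\bar{d}=3/2<2$; a double star with degrees $(3,3,1,1,1,1)$ even gives $M_2=12>10=M$ with $\bar{d}=5/3<2$. So the statement you correctly identified as ``the main obstacle'' is not merely hard, it is unprovable. It is worth noting that the paper does not prove it either: its ``converse'' paragraph assumes $M_2<M$ and deduces $\bar{d}<2$, which is just the contrapositive of the same implication $\bar{d}\ge 2\Rightarrow M_2\ge M$ established in its first line, so the stated ``if and only if'' is never actually proved (and by the examples above cannot be). The only direction used elsewhere in the paper is the one you did prove, since $d_n\ge 3$ forces $\bar{d}\ge 2$ and hence $M_2\ge M$.
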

\begin{proof} We have
  \[ 0 \leq \sum_{i=1}^n(d_i-\bard)^2=\sum_{i=1}^n d_i^2-n\bard^2=M_2+M-M\bard=M_2-M(\bard-1),\]
  so $M_2 \geq M$ if $\bar{d}\geq 2$.
  For the converse, suppose that $M_2 < M$. Let $n_j$ be the number of vertices
 with degree $j$, for all $j\geq 1$.  Then
 \[ M_2 - M = - n_1 + \sum_{j\geq 3} j(j-2)n_j < 0,\]
 and
 \[ M =  n_1 + 2 n_2 + \sum_{j\geq 3} j n_j
    \leq n_1 + 2 n_2 + \sum_{j\geq 3} j(j-2) n_j
       <  2n_1 + 2n_2 \leq 2n.
       \]
  So $\bard = M/n <2$,  completing the proof.
\end{proof}

The excluded graphs are rather uninteresting. They have a preponderance of components which are paths, as we see from the following.
\begin{lem}\label{lem:paths}
$M_2 \geq M$ for any connected graph $G$ which is not a path.
\end{lem}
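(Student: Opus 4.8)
The plan is to reuse the degree bookkeeping already set up in the proof of Lemma~\ref{lem:dbar}. Write $n_j$ for the number of vertices of degree~$j$. Since a connected non-path has at least three vertices we have $n_0=0$, and since the $j=2$ term of $\sum_j j(j-2)n_j$ vanishes,
\[ M_2 - M \;=\; \sum_{j\ge 1} j(j-2)\,n_j \;=\; -\,n_1 \,+\, \sum_{j\ge 3} j(j-2)\,n_j . \]
Thus $M_2-M$ is a contest between the $-n_1$ contributed by the leaves and the nonnegative contribution $\sum_{j\ge 3} j(j-2)\,n_j$ of the vertices of degree at least~$3$, each of which contributes at least~$3$. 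The whole statement therefore reduces to showing that the leaves cannot outnumber the high-degree vertices too badly, and I would split into cases accordingly.

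First I would dispose of the case $n_j=0$ for all $j\ge 3$: then $G$ has maximum degree at most~$2$, so being connected it is a path or a cycle; as it is not a path it is a cycle, whence $n_1=0$ and $M_2-M=0$. Otherwise $G$ has at least one vertex of degree at least~$3$. If $n_1\le 2$, that single vertex already forces $\sum_{j\ge 3}j(j-2)\,n_j\ge 3$, so $M_2-M\ge -n_1+3\ge 1>0$. If instead $n_1\ge 3$, I would bring in connectivity: from $|E|\ge n-1$ we get $M=\sum_j j\,n_j\ge 2\sum_j n_j-2$, that is $\sum_{j\ge 3}(j-2)\,n_j\ge n_1-2$; combining this with $j(j-2)\ge 3(j-2)$ for $j\ge 3$ gives
\[ M_2-M \;\ge\; -\,n_1 + 3\sum_{j\ge 3}(j-2)\,n_j \;\ge\; -\,n_1 + 3(n_1-2) \;=\; 2n_1-6 \;\ge\; 0 . \]
These cases are exhaustive, so $M_2\ge M$.

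I do not expect a genuine obstacle. The only inputs beyond the two-line degree computation from Lemma~\ref{lem:dbar} are the elementary structural facts that a connected graph of maximum degree at most~$2$ is a path or a cycle, and that a connected non-path has at least three vertices (so that $n_0=0$ and no isolated-vertex term appears). The bound is tight precisely for cycles and for graphs such as $K_4$ or $K_{1,3}$, which is consistent with the estimates above; the mild care needed is only to make sure the degenerate small cases ($P_1$, $P_2$) are genuinely excluded by the hypothesis "not a path", and that the edge count $|E|\ge n-1$ is used only once, in the $n_1\ge 3$ branch.
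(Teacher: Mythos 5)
Your proof is correct, and it takes a genuinely different route from the paper. The paper argues by induction on a construction: it first reduces to the case $d_1\ge 3$ (cycles being handled via Lemma~\ref{lem:dbar}), then builds $G$ up from a $3$-star $K_{1,3}$, for which $M_2=M=6$, by adding edges one at a time, noting that each addition increases $M$ by $2$ and $M_2$ by $2(d_i+d_j)\ge 2$, so the inequality is preserved throughout. Your argument is instead a direct, non-inductive count in the spirit of the converse half of Lemma~\ref{lem:dbar}: from $M_2-M=\sum_j j(j-2)n_j=-n_1+\sum_{j\ge 3}j(j-2)n_j$ you split on whether there is a vertex of degree at least $3$ and on the size of $n_1$, and in the only nontrivial branch ($n_1\ge 3$) you invoke connectivity exactly once, through $|E|\ge n-1$, which rearranges to $\sum_{j\ge 3}(j-2)n_j\ge n_1-2$ and then gives $M_2-M\ge 2n_1-6\ge 0$. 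All the steps check out (the case split is exhaustive, $j(j-2)\ge 3(j-2)$ for $j\ge 3$ is valid, and the degenerate one- and two-vertex graphs are excluded as paths). What your version buys is that the use of connectivity is isolated and explicit, and no appeal is needed to the slightly informal claim that every connected graph with a vertex of degree at least $3$ can be grown from a $3$-star by successive edge additions; what the paper's induction buys is a cleaner view of the equality cases, since it tracks exactly when an edge addition increases $M_2$ by only $2$.
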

\begin{proof}
If $G$ is not a path or a cycle, then $d_1\geq 3$. A cycle has $\bard=2$, so $M_2\geq M$ from Lemma~\ref{lem:dbar}. Otherwise, we can construct $G$ from a 3-star, a graph with degree sequence $(3,1,1,1)$, by successively adding edges and vertices. For a 3-star, $M_2=M=6$. Now, if we add an edge between vertices of degrees $d_i$ and $d_j$, $M\gets M+2$, and
\[ M_2\gets M_2+(d_i+1)d_i+(d_j+1)d_j-d_i(d_i-1)-d_j(d_j-1)=M_2+2(d_i+d_j)\geq M_2+2,\]
with equality only if we add an edge from a vertex of degree $d_i=1$ to a new vertex of degree 0.
Otherwise the inequality is strict. The claim now follows by induction.
\end{proof}
If $G$ is a path $P_\ell$ with $\ell$ vertices, then $M=2(\ell-1)$ and $M_2=2(\ell-2)$, so $M-M_2=2$.
Thus, if $M>M_2$, then Lemma~\ref{lem:paths} implies that most components must be paths.

%%%%%%%%%%%%%%%%%%%%%%%%%%%%%%%%%%%%%%%%%%%%%%%%%%%%%%%%%%%%%%%%%%%%%%%%%%%

\section*{Acknowledgements}
The authors thank the anonymous referee for their helpful comments, and
thank Brendan McKay for useful discussions.
Colin Cooper's research was supported by DFG Project~491453517. 
Martin Dyer's research was supported by an EPSRC grant EP/S016562/1 ``Sampling in hereditary classes''.
Catherine Greenhill's research was supported by Australian Research Council Discovery Project DP250101611.


\begin{thebibliography}{99}

\bibitem{AMPTW}
D.~Allendorf, U.~Meyer, M.~Penschuck, H.~Tran and N.~Wormald (2022), Engineering uniform sampling of graphs with a prescribed power-law degree sequence, in Proc. Symposium on Algorithm Engineering and Experiments (ALENEX 2022), pp. 27--40.

\bibitem{ABLMO}
Z.~Allen-Zhu, A.~Bhaskara, S.~Lattanzi, V.~Mirrokni and L.~Orecchia (2018), Expanders via
local edge flips, in: \emph{Proc. 27th Annual ACM-SIAM Symposium on Discrete Algorithms} (SODA 2016), pp.~259-–269.

\bibitem{AK}
G.~Amanatidis and P.~Kleer (2020), Rapid mixing of the switch Markov chain for strongly stable degree sequences, \emph{Random Struct. Algorithms} \textbf{57}, 637--657.

\bibitem{AK-jd}
G.~Amanatidis and P.~Kleer (2022), Rapid mixing of the switch Markov chain for
2-class joint degree matrices, \emph{SIAM Journal on Discrete Mathematics} {\bf 36}, 118--146.

\bibitem{AM}
P.~Ara{\' u}jo and L.~Mattos, Local central limit theorem for triangle counts in sparse
random graphs, Preprint. \arxiv{2307.09446}

\bibitem{AGW}
A.~Arman, P.~Gao and N.~Wormald (2019),
Fast uniform generation of random graphs with given degree sequences, in
\emph{60th Annual IEEE Symposium on Foundations of Computer Science (FOCS 2019)}, pp. 1371--1379.

\bibitem{BHS}
T.~Bannink, R.~van der Hofstad and C.~Stegehuis (2018), Switch chain mixing times and triangle counts in simple random graphs with given degrees, \emph{Journal of Complex Networks} {\bf 7}, 210-–225.

\bibitem{BHJ} A.~Barbour, L.~Holst and S.~Janson, \emph{Poisson Approximation}, Oxford University Press,  1992.

\bibitem{BKS} M. Bayati, J.H. Kim and A. Saberi (2010), A sequential algorithm for generating random graphs, \emph{Algorithmica} \textbf{58}, 860--910.

\bibitem{BBS}
S.~Bhamidi, G.~Bresler and A.~Sly, Mixing time of exponential random graphs,
\emph{Annals of Applied Probability} {\bf 21} (2011), 2146--2170.

\bibitem{bollobas}
B.~Bollob{\' a}s (1980), A probabilistic proof of an asymptotic formula for the
number of labelled regular graphs, \emph{European Journal of Combinatorics}
{\bf 1}, 311--316.

\bibitem{CDS}
S.~Chatterjee, P. Diaconis and A.~Sly (2011), Random graphs with a given degree sequence, \emph{Annals of Applied Probability} {\bf 21}, 1400--1435.

\bibitem{chen}
L.~Chen (1975), Poisson approximation for dependent trials, \emph{Annals of Probability} \textbf{3}, 534–-545.

\bibitem{CDG07}
C.~Cooper, M.~Dyer and C. Greenhill (2007),
Sampling regular graphs and a peer-to-peer network,
\emph{Combinatorics, Probability and Computing} {\bf 16}, 557--593.

\bibitem{CDG19} C.~Cooper, M.~Dyer and C.~Greenhill (2019), Triangle-creation processes on cubic graphs, \arxiv{1905.04490}.

\bibitem{CDG21}
C.~Cooper, M.~Dyer and C.~Greenhill (2021),
A triangle process on regular graphs, in
\emph{Proc.\ 32nd International Workshop on Combinatorial Algorithms (IWOCA 2021)},
Springer Lecture Notes in Computer Science, vol.~12757, pp.~310--323. 	

\bibitem{CDGH}
C.~Cooper, M.~Dyer, C.~Greenhill and A.~Handley (2019),
The flip Markov chain for connected regular graphs,
\emph{Discrete Applied Mathematics} {\bf 254}, 56--79.

\bibitem{CDEM}
{\' E}.~Czabarka, A.~Dutle, P.\,L.~Erd{\H o}s, I.~Mikl{\' o}s (2015), 
On realizations of a joint degree matrix,  
\emph{Discrete Applied Mathematics} {\bf 181}, 283--288. 

\bibitem{DSC}
P.~Diaconis and L.~Saloff-Coste (1993),
Comparison theorems for reversible Markov chains,
\emph{Annals of Applied Probability} {\bf 3}, 696--730.

\bibitem{hungarians}
P.L.~Erd{\H o}s, C.~Greenhill, T.R.~Mezei, I.~Miklos, D.~Solt{\' e}sz, L.~Soukup (2022),
The mixing time of the switch Markov chains: a unified approach.
\emph{European Journal of Combinatorics}, \textbf{99}, 103421.

\bibitem{EMT}
P.\, L.~Erd{\H o}s, I.~Mikl{\' o}s and Z. Toroczkai (2015), A decomposition based proof for fast
mixing of a Markov chain over balanced realizations of a joint degree matrix,
\emph{SIAM Journal on Discrete Mathematics} {\bf 29}, 481--499.

\bibitem{FGMS} T.~Feder, A.~Guetz, M.~Mihail and A.~Saberi (2006), ``A local switch Markov chain on given degree graphs with application in connectivity of peer-to-peer networks'', in:
     \emph{Proc. 47th Annual IEEE Symposium on Foundations of Computer Science} (FOCS 2006), pp.~69--76.

\bibitem{fulkerson}
D.\,R.~Fulkerson, A.\,J.~Hofman and M.\,J.~McAndrew,
Some properties of graphs with multiple edges, 
\emph{Canadian Journal of Mathematics} {\bf 17} (1965), 166--177.

\bibitem{Gao} P.~Gao (2024), Triangles and subgraph probabilities in random regular graphs,
\emph{Electronic Journal of Combinatorics} {\bf 31} \#P1.2.

\bibitem{GG}
P.~Gao and C.~Greenhill (2021), Mixing time of the switch Markov chain and stable
degree sequences, \emph{Discrete Applied Mathematics} {\bf 291}, 143--162.

\bibitem{GaoOha}
P.~Gao and Y.~Ohapkin (2023), Subgraph probability of random graphs with specified degrees and applications to chromatic number and connectivity, \emph{Random Structures \& Algorithms} {\bf 62}, 911--934.

\bibitem{PGW} P.~Gao and N.~Wormald (2017), Uniform generation of random regular graphs, \emph{SIAM Journal of Computing} \textbf{46}, 1395--1427.

\bibitem{ZGW} Z.~Gao and N.~Wormald (2008), Distribution of subgraphs of random regular graphs,
\emph{Random Structures \& Algorithms} {\bf 32}, 38--48.

\bibitem{GKM}
S.M.~Goodreau, J.A.~Kitts and M.~Morris (2009),
Birds of a feather, or friend of a friend? Using exponential random graph
models to investigate adolescent social networks,
\emph{Demography}  {\bf 46}, 103--125.

\bibitem{GS}
C.~Greenhill and M.~Sfragara (2018),
The switch Markov chain for sampling irregular graphs and digraphs,
\emph{Theoretical Computer Science} {\bf 719}, 1--20.

\bibitem{Greenhill}
C.~Greenhill (2021),
Generating graphs randomly, in \emph{Surveys in Combinatorics 2021}, London Mathematical Society Lecture Note Series {\bf 470}, Cambridge University Press, pp. 133--186.

\bibitem{guruswami}
V.~Guruswami (2000), Rapidly mixing Markov chains: A comparison of techniques (a survey).
Unpublished manuscript. \arxiv{1603.01512}

\bibitem{HLM}
P.~van der Hoorn, G.~Lippner and E.~ Mossel (2022), Regular graphs with many triangles are structured,
\emph{Electronic Journal of Combinatorics} \textbf{29}, \#P1.7.

\bibitem{hakimi}
S.\,L.~Hakimi (1962), On realizability of a set of integers as degrees of the vertices of a
linear graph. I, \emph{SIAM Journal on Applied Mathematics} {\bf 10},
496--506.

\bibitem{HashMc}
M.~Hasheminezhad and B.~McKay (2010), Combinatorial estimates by the switching method,
\emph{Contemporary Mathematics} \textbf{531}, 209--221.

\bibitem{hastings}
W.\,K.~Hastings (1970), Monte Carlo sampling methods using Markov chains and their applications, \emph{Biometrika} {\bf 57}, 97--109.

\bibitem{havel}
V.~Havel (1995), A remark on the existence of finite graphs (in Czech),
\v Casopis Pro P{\v e}stov{\' a}n\i\ Matematiky {\bf 80}, 477--480.

\bibitem{Janson}
S.~Janson (2010), Asymptotic equivalence and contiguity of some random graphs,
\emph{Random Structures \& Algorithms} {\bf 36}, 26--45.

\bibitem{JPPS}
M.~Jenssen, W.~Perkins, A.~Potukuchi and M.~Simkin,
Sampling, counting and large deviations for triangle-free graphs near the critical density, in \emph{Proceedings of the 65th IEEE Annual Symposium on
Foundations of Computer Science}, IEEE, 2024, pp.151--165.

\bibitem{jerrum03lectures}
M.~Jerrum,
\emph{Counting, Sampling and Integrating: Algorithms and Complexity},
Birkh{\"{a}}user, Basel, 2003.

\bibitem{jerrum89conductancepermanent}
M.~Jerrum and A.~Sinclair (1989),
Approximating the permanent,
\emph{SIAM J. Comput.} {\bf 18(6)}, 1149--1178.

\bibitem{JSV}
M.~Jerrum, A.~Sinclair and E.~Vigoda, A polynomial-time approximation algorithm
for the permanent of a matrix with non-negative entries, \emph{Journal of the ACM}
{\bf 51} (2004), 671--697.

\bibitem{JGN} E.M.~Jin, M.~Girvan and M.E.J.~Newman (2001),
Structure of growing social networks,
\emph{Physical Review E} {\bf 64}, 046132.

\bibitem{KTV} R.~Kannan, P.~Tetali and S.~Vempala (1999), Simple Markov chain algorithms for generating random bipartite graphs and tournaments, \emph{Random Structures \& Algorithms} {\bf 14}, 293--308.

\bibitem{LeCam} L.~Le~Cam (1960), An approximation theorem for the Poisson binomial distribution, \emph{Pacific J. Mathematics} \textbf{10},
1181--1197.

\bibitem{LMM}
C.~Lowcay, S.~Marsland and C.~McCartin (2013),
Constrained switching in graphs: a constructive proof,
in \emph{2013 International Conference on Signal-Image Technology and Internet-Based Systems},
pp. 599--604.

\bibitem{MS}
P.~Mahlmann and C.~Schindelhauer (2005),
Peer-to-peer networks based on random transformations of connected regular undirected graphs,
in \emph{Proc.\ 17th Annual ACM Symposium on Parallelism in Algorithms and Architectures (SPAA 2005)}, pp. 155--164.

\bibitem{McKay1985} B.~McKay (1985), Asymptotics for symmetric 0-1 matrices with prescribed row sums, \emph{Ars Combin.} \textbf{19}, 15--25.

\bibitem{McK}
B. McKay (2010), Subgraphs of random graphs with specified degrees, \emph{Proc. International Congress of Mathematicians (ICM 2010)}, pp. 2489--2501.

\bibitem{MWW}
B. McKay, N. Wormald and B. Wysocka (2004),
Short cycles in random regular graphs,
\emph{Electronic Journal of Combinatorics} {\bf 11}, \#R66.

\bibitem{metropolis}
N.~Metropolis, A.W.~Rosenbluth, M.N.~Rosenbluth, A.H.~Teller and E.~Teller,
Equation of state calculations by fast computing machines,
\emph{Journal of Chemical Physics} {\bf 21} (1953), 1087--1092.

\bibitem{MES} I.~Miklos, P.~Erd{\H o}s and L.~Soukup (2013), Towards random uniform sampling of bipartite graphs with given degree sequence, \emph{Electronic Journal of Combinatorics} {\bf 20}, \#P16.

\bibitem{nishimura} N.~Nishimura (2018), Introduction to reconfiguration, \emph{Algorithms} \textbf{11}, 52.

\bibitem{PRS} E. Pekoz, A. Rollin and N. Ross (2013), Total variation error bounds for geometric approximation,
\emph{Bernoulli} \textbf{19}, 610–-632.

\bibitem{reinert} G.~Reinert (2011), A short introduction to Stein's method, Department of Statistics, University of Oxford.

\bibitem{serfling} R.~Serfling (1978), Some elementary results on Poisson approximation in a sequence of Bernouuli trials,
\emph{SIAM Review} \textbf{20}, 567--579.

\bibitem{sinclair}
A.~Sinclair (1992), Improved bounds for mixing rates of Markov chains and multicommodity flows,
\emph{Combinatorics, Probability and Computing} {\bf 1}, 351--370.

\bibitem{SP}
I.~Stanton and A.~Pinar (2011), Constructing and sampling graphs with a prescribed joint
degree distribution, \emph{ACM Journal of Experimental Algorithms} {\bf 17}, 3-1.

\bibitem{steele} J.M.~Steele, Le Cam's inequality and Poisson approximations, \emph{American Mathematical Monthly} \textbf{101}, 48--54.

\bibitem{stein}
C.~Stein, (1986), \emph{Approximate computation of expectations}, Lecture Notes-Monograph Series vol. 7,
Institute of Mathematical Statistics.

%\bibitem{taylor}
%R.~Taylor (1981), Constrained switchings in graphs, in
%\emph{Combinatorial Mathematics VIII} (K.L.~McAvaney, ed.), Springer Lecture Notes in Mathematics, vol.~884, pp. 314--336.

\bibitem{TikYou} K.~Tikhomirov and P.~Youssef (2023), Sharp Poincar\'e and log-Sobolev inequalities for the switch chain on regular bipartite graphs, \emph{Probability and Related Fields} \textbf{185}, 89--184.

\bibitem{Wormald}
N. Wormald (2018), Asymptotic enumeration of graphs with given degree sequence,
\emph{Proc. International Congress of Mathematicians (ICM 2018)}, pp.~3245--3264.
\end{thebibliography}
\end{document}